\newtheorem{thm}{Theorem}[section]
\newtheorem{prop}[thm]{Proposition}
\newtheorem{cor}[thm]{Corollary}
\newtheorem{lem}[thm]{Lemma}
\theoremstyle{definition}
\newtheorem{definition}[thm]{Definition}
\newtheorem{ex}[thm]{Example}
\newtheorem{rem}[thm]{Remark}
\newtheorem{art}[thm]{}
\numberwithin{paragraph}{section}
\numberwithin{equation}{thm}
\def\N{{\mathbb N}}
\def\Z{{\mathbb Z}}
\def\Q{{\mathbb Q}}
\def\R{{\mathbb R}}
\def\A{{\mathbb A}}
\def\G{{\mathbb G}}
\def\L{{\mathbb L}}
\def\KA{{\mathscr A}}
\def\KC{{\mathscr C}}
\def\KD{{\mathscr D}}
\def\KL{{\mathscr L}}
\def\KX{{\mathscr X}}
\newcommand{\metr}{{\|\hspace{1ex}\|}}
\newcommand{\Hom}{{\rm Hom}}
\def\an{{\rm an}}
\def\trop{{\rm trop}}
\def\Trop{{\rm Trop}}
\newcommand{\Xan}{{X^{\rm an}}}
\newcommand{\Tan}{{T^{\rm an}}}
\newcommand{\Uan}{{U^{\rm an}}}
\newcommand{\Acal}{{\mathscr A}}
\newcommand{\Bcal}{{\mathscr B}}
\newcommand{\Ccal}{{\mathscr C}}
\newcommand{\Dcal}{{\mathscr D}}
\newcommand{\Hcal}{{\mathscr H}}
\newcommand{\Lcal}{{\mathscr L}}
\newcommand{\Mcal}{{\mathscr M}}
\newcommand{\Ncal}{{\mathscr N}}
\newcommand{\Ocal}{{\mathscr O}}
\newcommand{\Ucal}{{\mathscr U}}
\newcommand{\Vcal}{{\mathscr V}}
\newcommand{\Wcal}{{\mathscr W}}
\newcommand{\Xcal}{{\mathscr X}}
\newcommand{\Spec}{{\rm Spec}}
\newcommand{\Spf}{{\rm Spf}}
\newcommand{\ve}{{\varepsilon}}
\newcommand{\id}{{\rm id}}
\newcommand{\supp}{{\rm supp}}
\newcommand{\relint}{{\rm relint}}
\newcommand{\kcirc}{{ K^\circ}}
\newcommand{\ktilde}{{ \tilde{K}}}
\newcommand{\Lan}{{L^{\rm an}}}
\newcommand{\val}{{\rm val}}
\newcommand{\dpa}{{d'_{\rm P}}}
\newcommand{\dpb}{{d''_{\rm P}}}
\title[Positivity properties of metrics and delta-forms]
{Positivity properties of metrics and delta-forms}
\author{Walter Gubler}
\address{W. Gubler, Fakultät für Mathematik, Universit{\"a}t Regensburg, 
93040 Regensburg, Germany}
\email{walter.gubler@mathematik.uni-regensburg.de}
\author{Klaus K{\"u}nnemann}
\address{K. K{\"u}nnemann, Fakultät für Mathematik, Universit{\"a}t Regensburg, 
93040 Regensburg, Germany}
\email{klaus.kuennemann@mathematik.uni-regensburg.de}
\date{\today}
\begin{document}

\begin{abstract}
In  previous work, we have introduced $\delta$-forms on the Berkovich analytification of an algebraic variety in order to study smooth or formal metrics via their associated Chern $\delta$-forms. In this paper, we investigate positivity properties of $\delta$-forms and $\delta$-currents. This leads to various plurisubharmonicity notions for continuous metrics on line bundles. In the case of a formal metric, we  show that many of these positivity notions are equivalent to Zhang's semipositivity. For piecewise smooth metrics, we prove that plurisubharmonicity can be tested on tropical charts in terms of convex geometry. We apply this to smooth metrics, to canonical metrics on abelian varieties and to toric metrics on toric varieties.
\bigskip

\noindent
MSC: Primary 32P05; Secondary  14G22, 14T05, 32U05, 32U40
\end{abstract}

\maketitle
\setcounter{tocdepth}{1}
\setcounter{section}{-1}

\tableofcontents

\section{Introduction}\label{intro}
Pluripotential theory studies plurisubharmonic
functions and Monge-Amp\`ere operators 
and constitutes a central area of the modern theory of
complex analytic spaces.
In recent years a number of authors have introduced 
ideas and concepts from pluripotential theory 
into the theory of non-archimedean analytic  spaces.
Let us mention here the work of Baker and Rumely 
\cite{baker-rumely-2010} for the Berkovich
projective line and the work of Rumely, Kani, Chinburg-Rumely, Zhang, and Thuillier 
\cite{rumely-1989, kani-1989, chinburg-rumely-1993, zhang-1993,thuillier-thesis} 
via reduction graphs or skeletons for potential theory
on non-archimedean analytic curves.
An axiomatic  vision  of a theory of plurisubharmonic
functions on higher dimensional 
non-archimedean analytic spaces 
was formulated by Chinburg, Rumely and Lau in \cite[Sect. 4]{chinburgetal-2003}.

Higher dimensional theory started with Zhang's study of semipositive 
approximable metrics 
on line bundles in \cite{zhang-1995a} and in \cite{zhang-1995}. 
He realized that any model $\Lcal$ of a line bundle $L$ induces a 
metric on $L$. 
Such an algebraic metric is called semipositive if the restriction of $\Lcal$ to the special fibre is nef. 
A semipositive approximable metric on $L$ is defined as a uniform limit of metrics 
on $L$ for which some positive tensor powers are semipositive algebraic.
Bloch, Gillet and Soul\'e developed in 
\cite{bloch-gillet-soule-1995} a non-archimedean Arakelov theory based on the Chow groups of
all regular proper models over the valuation ring assuming resolution of singularities. 
Using Zhang's metrics and this non-archimedean Arakelov theory, Boucksom, Favre and Jonsson gave an approach to plurisubharmonic functions
via skeletons in the case of residue characteristic zero 
\cite{boucksometal-2012}.
If the involved measure is supported on a skeleton and if a certain algebraicity condition holds, then they prove the existence of 
a solution to the
non-archimedean Calabi-Yau problem \cite{boucksometal-2015}. Uniqueness was shown before by Yuan--Zhang \cite{yuan-zhang-2013} in complete generality. 
For proper toric varieties, Burgos, Philippon and Sombra gave in \cite{burgosetal-2011} a complete characterization of semipositive toric metrics in terms of convex functions and they also described in \cite{burgosetal-2012} positivity notions of toric metrics in terms of convex geometry. 

The theory of plurisubharmonic functions in \cite{boucksometal-2012} satisfies the required axioms of \cite{chinburgetal-2003}, except that it is not  of analytic character. 
Such an analytic theory has recently been established  
by Chambert-Loir and Ducros \cite{chambert-loir-ducros} introducing forms and currents on Berkovich spaces by 
using Lagerberg's superforms \cite{lagerberg-2012} on tropical charts. Their theory has the additional advantage that it works without any hypotheses on the characteristic. 
Their notion of plurisubharmonicity is directly related to the
positivity of currents. 
They have also transferred a part of the 
Bedford--Taylor theory to plurisubharmonic functions which are locally 
approximable by smooth plurisubharmonic functions leading to Monge--Amp\`ere measures for such locally psh-approximable functions. 
It would be desirable to extend the theory to all plurisubharmonic continuous functions as in 
 the complex Bedford--Taylor theory  \cite{bedford-taylor-1982} and  the monotone regularization theorem from the program in \cite[Sect. 4]{chinburgetal-2003} is also missing. 
 {Note that smooth metrics on line bundles have  first Chern forms, but algebraic metrics are singular {and} have only first Chern currents. The Bedford--Taylor theory for locally psh-approximable functions was used to define wedge products of such first Chern currents.}

The analytic theory of forms and currents in \cite{chambert-loir-ducros} has 
been extended in \cite{gubler-kuennemann}  
{to a new formalism of $\delta$-forms and $\delta$-currents. 
Both smooth
and algebraic metrics on line bundles have associated Chern $\delta$-forms leading to
natural wedge products {thus} bypassing the {use of} 
Bedford-Taylor theory.}
It is the aim of this paper to extend  the central notions of plurisubharmonicity and positivity 
to the theory of $\delta$-forms and $\delta$-currents.
We recall that a $\delta$-preform on $\R^r$ is a supercurrent $\alpha$ in the sense of Lagerberg \cite{lagerberg-2012} of the form 
$$\alpha = \sum_{i=1}^N \alpha_i \wedge \delta_{C_i},$$
where $\alpha_i$ is a superform and $\delta_{C_i}$ is the supercurrent of integration over a tropical cycle $C_i$ with smooth weights. The $\delta$-preforms on an open subset $\Omega$ of a tropical cycle $C$ with constant non-negative weights are supercurrents of the form $\alpha \wedge \delta_C$. Similarly as in complex analysis, Lagerberg \cite{lagerberg-2012} introduced weakly positive (resp. positive, resp. strongly positive)  superforms and supercurrents on $\R^r$. We show in Section \ref{posit} that this leads to corresponding positivity properties for supercurrents and hence for $\delta$-preforms on $\Omega$.

Let $X^\an$ denote the non-archimedean analytification of
a variety $X$ defined over an algebraically closed field $K$ complete with respect
to a non-trivial non-archimedean absolute value  $|\phantom{a}|$. 
 {{The hypotheses on $K$ make} some tropical and analytic arguments easier.
{They mean} no restriction of generality for positivity questions on varieties as we now explain: If one is not convinced to do analysis over a complete algebraically closed field as in the archimedean world, one might think about introducing $\delta$-forms on varieties over any non-archimedean field. However a sensible definition of positivity notions can always be checked after base change to a field of the above form and then by restricting to the irreducible components of the base change.}

 {By definition, very affine varieties {admit} a closed embedding into a multiplicative torus.}  
We apply the above to tropical charts $(V, \varphi_U)$ of $\Xan$, where $\varphi_U:U \rightarrow {\mathbb G}_m^r$ 
is the canonical closed embedding of a very affine open subset $U$ of $X$ and where $V:=\trop_U^{-1}(\Omega)$ for an open subset $\Omega$ of the tropical variety $\Trop(U)$. We have seen in \cite{gubler-kuennemann} that $\delta$-preforms on $\Omega$ represent generalized $\delta$-preforms on $V$ and by a sheafification process we get the bigraded algebra of generalized $\delta$-forms on any open subset $W$ of $\Xan$. The subalgebra of $\delta$-forms is characterized by a closedness condition with respect to natural differential operators $d',d''$ analogous to $\partial, \bar{\partial}$ in complex analytic geometry. As a topological dual, we have the space of $\delta$-currents on $W$ (see \cite[\S 4]{gubler-kuennemann}). The smooth forms from \cite{chambert-loir-ducros} build a subalgebra of the algebra of $\delta$-forms. 
We will show in Section \ref{positi} that the positivity notions of $\delta$-preforms induce corresponding positivity notions of (generalized) $\delta$-forms requiring that these notions are functorial. By duality, we get corresponding positivity properties for $\delta$-currents. 
Recall from \cite[\S 5.5]{chambert-loir-ducros} that a continuous function $f$ on $W$ is plurisubharmonic if the current $d'd''[f]$ is positive. In Section \ref{pluri}, we investigate variants of this definition. For example, we define $f$ to be $\delta$-psh if a similar condition holds for $\delta$-currents. It is not clear that these notions behave functorially with respect to morphisms $\varphi:X' \to X$ of algebraic varieties over $K$ and so we introduce also functorial psh (resp. functorial $\delta$-psh) continuous functions. In the following, we consider a line bundle $L$ of $X$ and a continuous metric $\metr$ on $L^{\rm an}$ over $W$. Then $\metr$ is called plurisubharmonic if $-\log \| s \|$ is psh on $W$ for any local frame $s$ of $L$. Similarly, we transfer the other positivity notions to metrics.

In Theorem \ref{liftvarieties}, we prove a crucial lifting result which enables us to lift closed subsets from the special fibre of a model over a  valuation ring to the generic fibre. In Section \ref{forma}, we recall the definition and some properties of a formal metric on a compact (reduced) strictly $K$-analytic space $V$.  If $V$ is a strictly $K$-analytic domain in the analytification of a proper variety $X$ over $K$, then we show in Corollary \ref{algebraic vs locally formal} that  any formal metric $\metr$ on the restriction of $L$ to $V$  extends to an algebraic metric of the line bundle $L$ over $X$. In Section \ref{semip}, we give a local variant 
of Zhang's semipositivity definition for a formal metric following a suggestion from Tony Yue Yu. Using our above lifting theorem, we prove  in Theorem \ref{main theorem for pl} the following result.

\begin{thm} \label{localnotes5}
Let $(L,\metr)$ be a formally metrized line bundle on a proper variety $X$ and let  $W$ be an open subset of $\Xan$. Then the following properties are equivalent:
\begin{itemize}
 \item[(1)] The formal metric $\metr$ is semipositive over $W$.
 \item[(2)] The restriction of the metric  {$\metr$} to $W$ is functorial $\delta$-psh.
 \item[(3)] The restriction of the metric $\metr$ to $W$ is functorial psh.
 \item[(4)] The $\delta$-form $c_1(L|_W,\metr)$ is positive on $W$.
 \item[(5)] The restriction of $\metr$ to $W \cap C^{\rm an}$  is psh for any closed curve $C$ of $X$.
\end{itemize}
\end{thm}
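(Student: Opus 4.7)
The plan is to establish the five-term cycle $(1) \Rightarrow (4) \Rightarrow (2) \Rightarrow (3) \Rightarrow (5) \Rightarrow (1)$, where the substantive inputs are the explicit description of the Chern $\delta$-form of a formal metric from \cite{gubler-kuennemann} and the lifting theorem \ref{liftvarieties}.

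Two steps are essentially formal. For $(2) \Rightarrow (3)$, every positive $\delta$-current is in particular a positive current in the sense of Chambert-Loir and Ducros, so functorial $\delta$-psh continuous functions are a fortiori functorial psh, and the property transfers to metrics. For $(3) \Rightarrow (5)$, the closed immersion $\iota \colon C \hookrightarrow X$ of a closed curve is a morphism of algebraic varieties over $K$, so the functorial hypothesis directly yields that $\iota^* \metr = \metr|_{C^{\rm an}}$ is psh on $(\iota^{\rm an})^{-1}(W) = W \cap C^{\rm an}$.

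For $(4) \Rightarrow (2)$ I rely on the functorial behavior of positive $\delta$-forms developed in Section \ref{positi}. Given any morphism $\varphi \colon X' \to X$ of algebraic varieties over $K$, one has $c_1(\varphi^* L, \varphi^* \metr) = (\varphi^{\rm an})^* c_1(L,\metr)$, and pullback preserves positivity of $\delta$-forms, so $c_1(\varphi^* L, \varphi^* \metr)$ is positive on $(\varphi^{\rm an})^{-1}(W)$. Since $d'd'' (-\log \| s \|) = c_1(L,\metr)$ as $\delta$-forms on the domain of any local frame $s$, this is exactly the $\delta$-psh condition for $\varphi^* \metr$, so functorial $\delta$-pshness follows.

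The substantive content is concentrated in $(1) \Rightarrow (4)$ and $(5) \Rightarrow (1)$. For $(1) \Rightarrow (4)$, I would pass to a formal model $\Lcal$ of some tensor power of $L$ inducing $\metr$ and compute the Chern $\delta$-form in tropical charts as a piecewise linear/combinatorial object attached to $\Lcal$; the key point is that Zhang's semipositivity of $\Lcal|_{\tilde{X}}$ on the special fibre should translate, locally on $W$, into the convex-geometric positivity criteria for $\delta$-forms from Section \ref{posit}. For the closing implication $(5) \Rightarrow (1)$ I would argue by contraposition: if semipositivity fails on $W$, then there is an irreducible curve $\tilde C$ in the special fibre of the relevant model, meeting the reduction of $W$, with $\deg(\Lcal|_{\tilde C}) < 0$; the lifting theorem \ref{liftvarieties} furnishes a closed curve $C \subset X$ whose specialisation contains $\tilde C$, and the formally metrized $(L|_C, \metr|_{C^{\rm an}})$ inherits a model of negative degree, contradicting pshness on $W \cap C^{\rm an}$ via the one-dimensional characterization of psh formal metrics on curves in terms of degrees of models.

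The hard part will be $(5) \Rightarrow (1)$. The delicate issue is matching the open set $W$ in the analytic topology with the combinatorial condition of semipositivity, which lives on the special fibre of a formal model. One must ensure that the lifted curve $C$ meets $W$ in an open subset of $C^{\rm an}$ rich enough to witness the negative intersection on $\tilde C$; this forces a careful use of the reduction map and, in all likelihood, a preliminary reduction to the case where $W$ is the inverse image under the specialisation map of an open subscheme of the special fibre. Verifying this compatibility with the lifting theorem, and ensuring that the restricted model on the lifted curve inherits the negative degree, is where the main technical work resides.
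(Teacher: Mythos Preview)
Your cycle $(1)\Rightarrow(4)\Rightarrow(2)\Rightarrow(3)\Rightarrow(5)\Rightarrow(1)$ contains a genuine gap at $(4)\Rightarrow(2)$. The $\delta$-psh condition asks that the $\delta$-\emph{current} $[c_1(L|_W,\metr)]_E$ be positive, and the paper's machinery does \emph{not} provide the implication ``positive $\delta$-form $\Rightarrow$ positive associated $\delta$-current'' in general. Corollary~\ref{delta forms vs delta currents} gives this only for \emph{positively representable} $\delta$-forms, and Proposition~\ref{positivity representable products} shows that the product of two positive $\delta$-forms need not be positive unless one factor is positively representable (or smooth). So your sentence ``this is exactly the $\delta$-psh condition'' conflates positivity of the $\delta$-form with positivity of its $\delta$-current, and that is precisely the step that is not available.

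The paper avoids this by proving $(1)\Rightarrow(2)$ directly, and the key idea---absent from your sketch of $(1)\Rightarrow(4)$---is an \emph{ample perturbation}. After reducing to $X$ projective via Chow's lemma and the projection formula, one picks a very ample $\Hcal$ on an algebraic model $\Xcal$ and forms $\Lcal_\varepsilon=\Lcal\otimes\Hcal^\varepsilon$. Semipositivity of $\metr$ at $x$ means $\Lcal|_Z$ is nef on the closure $Z$ of $\pi(x)$ (a proper variety since $x$ is an inner point), so $\Lcal_\varepsilon|_Z$ is ample. Lemma~\ref{local ample Modellmetrik ist positiv repraesentierbar} then shows $c_1(L_\varepsilon,\metr_\varepsilon)$ is positively \emph{representable} near $x$, hence its $\delta$-current is positive by Corollary~\ref{delta forms vs delta currents}; letting $\varepsilon\to 0$ gives (2). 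Your vague ``semipositivity should translate into convex-geometric positivity'' is not enough: nef alone does not give semiample, and without semiample you cannot write $-\log\|s\|$ locally as a maximum of linear forms, which is what produces the positive representative.

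Your sketch of $(5)\Rightarrow(1)$ is essentially correct and matches the paper: use an algebraic model, lift a curve $Y\subseteq\Xcal_s$ via Theorem~\ref{liftvarieties} to a closed curve $C\subseteq X$, and invoke Lemma~\ref{curveslemma} (the equivalence on curves between semipositivity and positivity of the Chambert--Loir measure). The delicate compatibility with $W$ that you worry about is handled there by the claim that $\xi_Y\in W$ iff $Y\subseteq\pi(W)$, so no preliminary reduction of $W$ is needed. Note also that the paper does not argue $(3)\Leftrightarrow(4)$ inside the cycle; it derives it separately from Proposition~\ref{functorial current positive and positive delta-forms}, so the actual logical flow is $(1)\Rightarrow(2)\Rightarrow(3)\Rightarrow(5)\Rightarrow(1)$ together with $(3)\Leftrightarrow(4)$.
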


In Section \ref{Section: Semipositive approximable metrics}, we will show that a formal metric on $L$ is semipositive as a formal metric if and only if it a semipositive approximable metric. 
In the case of a discretely value field with residue characteristic $0$, this was first proved in \cite[Remark after Thm. 5.12]{boucksometal-2012}. Here, this is an easy consequence of our lifting theorem. We show also that the restriction of a semipositive approximable metric 
 to any closed curve is plurisubharmonic.  

In Section \ref{Piecewise smooth metrics}, we characterize piecewise smooth metrics in terms of convex geometry on tropical charts. 

\newtheorem*{main theorem 2}{Theorem \ref{psh slope condition}}
\begin{main theorem 2}
Let $L$ be a line bundle on  {an} algebraic variety $X$ over $K$.
Let $\metr$ be a piecewise smooth metric on  $L$ over an open subset $W$ of
$X^\an$. 
Then the metric $\metr$ is plurisubharmonic if and only if 
{for each tropical frame $(V,\varphi_U,\Omega,s,\phi)$ of $\metr$} we have
\begin{enumerate}
\item[(i)]
the restriction of $\phi$ to each maximal face of $\Trop (U) {\cap \Omega}$ is a convex function
and
\item[(ii)]
the corner locus $\tilde\phi\cdot \Trop (U)$ is an effective tropical cycle {on $\Omega$} 
with smooth weights.
\end{enumerate}
\end{main theorem 2}

Let us briefly explain the terminology used in the above theorem. The metric $\metr$ is called piecewise smooth if there is a tropical chart $(V,\varphi_U)$ in $W$ and a frame $s$ of $L$ over $U$ 
such that $-\log\|s\|= \phi \circ \trop_U$ on $V$ for a piecewise smooth function $\phi$ on the open subset $\Omega = \trop_U(V)$ of $\Trop(U)$. If $\Omega$ is also convex and if $\phi$ extends to a piecewise smooth function $\tilde\phi:N_\R \to \R$, then we call it  a tropical frame for $\metr$. The corner locus $\tilde\phi\cdot \Trop (U)$ is a tropical cycle of codimension $1$ in $N_{U,\R}$ which is supported in the non-differentiability locus of $\tilde\phi|_{\Trop(U)}$ and whose weights are defined in terms of the outgoing slopes (see \cite[Def. 1.10]{gubler-kuennemann}). In the remaining part of Section \ref{Piecewise smooth metrics}, we apply our results to compare  different positivity notions in the following situations:

\begin{itemize}
 \item $\delta$-metrics  in Proposition \ref{delta comparison thm};
 \item smooth metrics in Corollary \ref{smooth comparison thm};
 \item canonical metrics on abelian varieties in Example \ref{exampleabelianvarieties};
 \item canonical metrics on line
 bundles algebraically equivalent to zero
in Remark \ref{canonical metric on algebraic equivalent to zero};
 \item piecewise smooth toric metrics on toric varieties in Proposition \ref{torus prop}.
\end{itemize}

\subsection{Terminology}

We use $A \subseteq B$ to denote subsets and $B \setminus A$ for the complement of $A$ in $B$. The zero is included in $\N$, $\R_+$ and $\R_-$. In topology, compact means quasicompact and Hausdorff. 

The group of multiplicative units in a ring $A$ is denoted by $A^\times$.  
An (algebraic) variety over a field is an  {integral} scheme which is separated and of finite type. 
A curve is an algebraic variety of dimension one. 
A variety $U$ is called very affine if it has a closed 
immersion into a multiplicative torus. 
We refer to Section \ref{positi} for the canonical 
torus $T_U$ with cocharacter lattice $M_U$ and dual $N_U$, 
for the canonical closed embedding $\varphi_U: U \to T_U$ 
and for tropical charts. 
For Berkovich analytic spaces, we use the terminology from \cite{berkovich-ihes} and \cite{berkovich-book}. 

For the notation in convex geometry, we refer to \cite[Appendix A]{gubler-kuennemann}. We usually work with a finite dimensional real vector space $N_\R$ with an integral structure given by a lattice $N$. We recall that a polyhedron 
$\Delta$ is called integral $\R$-affine if the underlying linear space $\L_\Delta$ of the affine space $\A_\Delta$ generated by $\Delta$ is defined over $\Z$. An affine map $F$ is called integral $\R$-affine if the underlying linear map $\L_F$ is defined over $\Z$. For the terminology in tropical geometry, we refer to \cite[\S 1]{gubler-kuennemann}. We recall briefly that a tropical cycle $C=(\Ccal,m)$ in $N_\R$ consists of an integral $\R$-affine polyhedral complex $\Ccal$ and smooth weight functions $m_\Delta:\Delta \to \R$ for each maximal face $\Delta \in \Ccal$. We call $C$ {\it effective} on $\Omega \subseteq N_\R$ if $m_\Delta|_{\Delta \cap \Omega} \geq 0$ for each maximal $\Delta \in \Ccal$.

In the whole paper, $K$ is an algebraically closed field endowed with a complete non-trivial non-archimedean absolute value. We use $\kcirc$ for the valuation ring, $K^{\circ\circ}$ for its maximal ideal and $\ktilde=\kcirc/K^{\circ\circ}$ for the residue field. For a (formal) scheme $\Xcal$ over $\kcirc$, we use $\Xcal_\eta$ for the generic fibre and $\Xcal_s$ is the special fibre. 

\subsection{Acknowledgments}

The authors would like to thank Antoine Chambert--Loir, Antoine Ducros,
 {Philipp Jell,} and Tony Yue Yu for helpful conversations, 
 {the referee for numerous helpful comments} 
 and 
the collaborative research center
SFB 1085 funded by the Deutsche Forschungsgemeinschaft for its support.

\section{Positive forms on tropical cycles}\label{posit}

We recall some positivity notions for superforms from 
\cite{lagerberg-2012} and \cite{chambert-loir-ducros}.  
Then we introduce similar notions for $\delta$-preforms as 
defined in \cite{gubler-kuennemann}.
Let $N$ denote a free $\Z$-module of finite rank $r$ with
dual lattice $M$ and $N_\R=N\otimes_\Z\R$.
In this section $C = (\KC,m)$ always denotes an $n$-dimensional  tropical 
cycle on $N_\R$ with 
positive {\it constant} weights.

\begin{art} \label{positive superforms} \rm
We recall the positivity notions for superforms 
and supercurrents on an open 
subset $\tilde\Omega$ of $N_\R$ given in 
\cite[Sect. 2]{lagerberg-2012}, \cite[Sect. 5.1]{chambert-loir-ducros}.

(i) The canonical involution $J$ acts on the space $A(\tilde\Omega)$
of superforms on $\tilde\Omega$ and maps $(p,q)$-superforms
to $(q,p)$-superforms \cite[(1.2.5)]{chambert-loir-ducros}.
A superform $\alpha\in A^{p,p}(\tilde\Omega)$ is called \emph{symmetric}
if $J(\alpha)=(-1)^p\alpha$ holds and
\emph{anti-symmetric} if $J(\alpha)=(-1)^{p+1}\alpha$.

(ii) A \emph{positive $(r,r)$-superform} on $\tilde\Omega$ is given by 
\[
f d' x_1 \wedge d'' x_1  \wedge \dots \wedge d' x_r \wedge d'' x_r
\]
for a non-negative  function $f$ on $\tilde\Omega$, 
where $x_1, \dots, x_r$ is a basis of $M$.

(iii) A symmetric superform
$\alpha \in A^{p,p}(\tilde\Omega)$ is called {\it weakly positive} if 
\[
\alpha \wedge \alpha_1 \wedge J(\alpha_1) \wedge \dots \wedge 
\alpha_{r-p} \wedge J(\alpha_{r-p})
\]
is a positive $(r,r)$-superform for all $\alpha_1, \dots , \alpha_{r-p} \in A^{1,0}(\tilde\Omega)$.

(iv) A symmetric superform $\alpha \in A^{p,p}(\tilde\Omega)$ is called 
{\it positive} if 
\[
(-1)^{(r-p)(r-p-1)/2}\alpha \wedge \beta \wedge J(\beta)
\]
is a positive $(r,r)$-superform for every $(r-p,0)$-superform $\beta$ on $\tilde\Omega$.

(v) A superform $\alpha \in A^{p,p}(\tilde\Omega)$ is called
{\it strongly positive} if 
\[
\omega = \sum_{k=1}^l f_k \alpha_{k1} \wedge J(\alpha_{k1}) 
\wedge \dots \wedge \alpha_{kp} \wedge J(\alpha_{kp})
\]
for non-negative smooth functions $f_k$ and $(1,0)$-superforms $\alpha_{ki}$ on
$\tilde\Omega$. 
A strongly positive superform is automatically symmetric.

(vi) A supercurrent $\alpha\in D(\tilde\Omega)$ is called \emph{symmetric}
iff it vanishes on anti-symmetric superforms with compact support in $\tilde\Omega$.
A symmetric supercurrent $T\in D^{p,p}(\tilde\Omega)$ is called
\emph{(weakly/strong\-ly) positive} iff $\langle T,\alpha\rangle \geq 0$
for all (strongly/weakly) positive superforms $\alpha$ with compact support in $\tilde\Omega$.

 {For clarity, we explain here for the whole paper that 
{the phrasing \emph{(weakly/\-strong\-ly) positive}} includes three alternatives: It  defines a positive (resp.~a weakly positive, resp.~a strongly positive) supercurrent by evaluating  at positive (resp.~strongly positive, resp.~weakly positive) superforms of complimentary degree.} 

 (vii) Let $A^{p,p}_{+}(\tilde\Omega)$ be the space of positive $(p,p)$-superforms on $\tilde\Omega$. 
Similarly, we use 
$A^{p,p}_{\rm w+}(\tilde\Omega)$ (resp. $A^{p,p}_{\rm s+}(\tilde\Omega)$) 
for the space of weakly positive (resp. strongly positive) $(p,p)$-superforms on $\tilde\Omega$.
We denote by $D^{p,p}_{\rm (s/w)+}(\tilde\Omega)$ the 
corresponding spaces of supercurrents on $\tilde\Omega$.
\end{art}

\begin{prop} \label{positivity properties of superforms} 
\begin{itemize} 
\item[(a)] 
$A^{p,p}_{\rm s+}(\tilde\Omega) \subseteq A^{p,p}_{\rm + }(\tilde\Omega) 
\subseteq A^{p,p}_{\rm w+}(\tilde\Omega)$.
\item[(b)] 
For $p=0, 1, r-1, r$, we have equality everywhere in (a).
\item[(c)] 
The pull-back of a (weakly/strongly) positive superform with respect to an 
affine map is a (weakly/strongly) positive superform.
 \item[(d)] 
{The wedge product of a (weakly/strongly) positive superform with a
strongly positive superform is (weakly/strongly)  positive.}
\item[(e)]
A symmetric superform $\alpha\in A^{p,p}(\tilde\Omega)$ is 
(weakly/strongly) positive if and only if $\alpha\wedge \beta$  
is positive for every (strongly/weakly) positive  
superform $\beta$ of type $(r-p,r-p)$. 
\item[(f)]
There is a natural morphism $A(\tilde\Omega)\to D(\tilde\Omega)$
which maps a superform $\alpha$ to the associated supercurrent $[\alpha]$ 
 {given by $\langle [\alpha], \beta \rangle = \int_{\tilde\Omega} \alpha \wedge \beta$ for a compactly supported $\beta$ on $\tilde\Omega$.}  
A superform $\alpha\in A^{p,p}(\tilde\Omega)$ is (weakly/strongly) positive
if and only if $[\alpha]$ is (weakly/strongly) positive.
\end{itemize}
\end{prop}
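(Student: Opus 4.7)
The plan is to work pointwise on $\tilde\Omega$: at each point the three positivity notions refer to elements of a finite dimensional real vector space, so the six claims reduce to assertions in convex linear algebra that mirror the classical theory of positive forms in complex Hermitian geometry. The basic algebraic observation I would record at the outset is that for any $(p,0)$-superform $\beta$ the sign-normalized product $(-1)^{p(p-1)/2}\beta\wedge J(\beta)$ is a positive $(p,p)$-superform; this bookkeeping underlies the $(-1)^{(r-p)(r-p-1)/2}$ in \ref{positive superforms}(iv) and will be invoked throughout.

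For (a), the inclusion $A^{p,p}_{\rm s+}(\tilde\Omega)\subseteq A^{p,p}_{+}(\tilde\Omega)$ follows by expanding a strongly positive $\alpha$ in its standard form and wedging with $\beta\wedge J(\beta)$: after moving the $J$-factors of $\alpha$ into canonical position, the sign produced is exactly $(-1)^{(r-p)(r-p-1)/2}$ and the result is a sum of products of positive $(1,1)$-forms with one positive $(r-p,r-p)$-form, hence a positive $(r,r)$-form. The inclusion $A^{p,p}_{+}(\tilde\Omega)\subseteq A^{p,p}_{\rm w+}(\tilde\Omega)$ is immediate by specializing the defining condition of positivity to decomposable $\beta=\alpha_1\wedge\cdots\wedge\alpha_{r-p}$. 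Part (b) is trivial for $p=0,r$; for $p=1$ a symmetric $(1,1)$-form corresponds to a symmetric matrix in the basis $d'x_i\wedge d''x_j$, weak positivity is equivalent to positive semidefiniteness of this matrix, and diagonalization expresses it as a sum of squares, yielding a strongly positive expression; the case $p=r-1$ reduces to $p=1$ via the wedge duality on the exterior algebra of $M_\R$. For (c) the strongly positive case is immediate since affine pullback commutes with $J$ and wedge products and sends $(1,0)$-superforms to $(1,0)$-superforms; the positive and weakly positive cases are handled by factoring the affine map as a surjection onto its image followed by a closed embedding, reducing to a dimension-constant case and a trivial one.

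I would prove (e) before (d). The forward direction of (e) follows by expanding the strongly positive factor in its standard form and observing that the resulting product against any weakly positive form of complementary bidegree is a sum of positive $(r,r)$-forms, handling both alternatives at once. For the reverse direction when $\alpha$ is asserted to be weakly positive, one tests the assumption $\alpha\wedge\beta\geq 0$ against the strongly positive decomposable forms $\beta=\alpha_1\wedge J(\alpha_1)\wedge\cdots\wedge\alpha_{r-p}\wedge J(\alpha_{r-p})$, recovering the definition of weak positivity. The reverse direction when $\alpha$ is asserted to be strongly positive is the main obstacle of the whole proposition: it says the strongly positive cone equals the bidual, under the wedge pairing, of the weakly positive cone. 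I would prove this pointwise in the finite dimensional vector space of symmetric $(p,p)$-superforms via a Hahn--Banach separation argument, as in \cite{lagerberg-2012}. Part (d) then follows from (e): the strongly-positive-with-strongly-positive case is trivial from the definition, while in the remaining cases one wedges a test form of complementary bidegree with the strongly positive factor and observes that this preserves the required positivity class, so that the defining inequalities transfer. Finally (f) is a direct translation through the pairing $\langle[\alpha],\beta\rangle=\int_{\tilde\Omega}\alpha\wedge\beta$: pairing each positivity class on superforms with the dual class on test forms and invoking (e) yields all three equivalences.
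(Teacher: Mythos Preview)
Your argument is correct and is essentially the standard one carried out in \cite[\S 2]{lagerberg-2012} and \cite[\S 5.1]{chambert-loir-ducros}, which is all the paper's own proof invokes; in particular the pointwise reduction, the diagonalization for $p=1$, the duality for $p=r-1$, and the Hahn--Banach separation for the bidual statement in (e) are exactly what those references do.

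One small point to flag: the paper's convention, spelled out in \ref{positive superforms}(vi), is that the phrasing ``(weakly/strongly) positive'' encodes \emph{three} alternatives, not two. Thus (e) also asserts that the cone of positive $(p,p)$-superforms is self-dual under the wedge pairing, i.e.\ $\alpha$ is positive if and only if $\alpha\wedge\beta\geq 0$ for every positive $\beta\in A^{r-p,r-p}(\tilde\Omega)$. Your write-up says ``both alternatives'' and omits this middle case. It is not hard: the reverse implication is immediate from the definition (test against $\beta=(-1)^{(r-p)(r-p-1)/2}\gamma\wedge J(\gamma)$), and the forward implication follows once one observes that every positive $(q,q)$-superform is a finite non-negative combination of forms $(-1)^{q(q-1)/2}\gamma\wedge J(\gamma)$ with $\gamma\in A^{q,0}$, which is the analogue of the spectral decomposition of a positive semidefinite Hermitian form and is proved pointwise. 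Adding one sentence to cover this would make your treatment of (e) complete.
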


\proof See 
\cite[\S 2]{lagerberg-2012} and \cite[\S 5.1]{chambert-loir-ducros}. 
For (e) and (f) observe in particular \cite[(5.1.2)]{chambert-loir-ducros}.
\qed

\begin{art}
(i) The notions and results mentioned above carry immediately over to
the situation where $\tilde\Omega$ is an open subset of an affine space 
under $N_\R$ \cite[5.1]{chambert-loir-ducros}.

(ii)  {The  notions \ref{positive superforms}(i)--(v) for a superform $\alpha$ can be defined  {at each point $x\in\tilde\Omega$ 
in the tensor product of exterior algebras of the cotangent space at $x$} 
and it is easy to see that 
$\alpha$ has such a property if and only if $\alpha(x)$ has the corresponding property for every $x \in \tilde\Omega$.}

(iii) For an open subset $\Omega$ of a polyhedron $\Delta$ in $N_\R$, a superform $\alpha$ on $\Omega$ is defined as the restriction of a superform $\tilde\alpha$ on an open subset $\tilde\Omega$ in $N_\R$ with $\tilde\Omega \cap \Delta = \Omega$.  
We call $\alpha$ \emph{(weakly/strongly) positive} if $\alpha$ is (weakly/strongly) positive at every point of $\Omega$ as a superform in the affine space $\A_\Delta$. An equivalent condition is that $\alpha|_{\Omega \cap \relint(\Delta)}$ is a (weakly/strongly) positive superform on the open subset $\relint(\Delta)$ of $\A_\Delta$. This uses the fact that the positivity loci of superforms are closed. 
\end{art}

\begin{ex}  \label{polyhedral supercurrent}
Let $\alpha\in D(\tilde\Omega)$ be a polyhedral supercurrent.
By definition (see \cite[Def. 2.3]{gubler-kuennemann}) 
there exists an integral $\R$-affine
polyhedral complex $\KD$ in $N_\R$ and a family 
$(\alpha_\Delta)_{\Delta\in\KD}$ of superforms $\alpha_\Delta$
on $\tilde\Omega\cap \Delta$ such that
\[
\alpha=\sum_{\Delta\in\KD}\alpha_\Delta\wedge\delta_\Delta
\mbox{ in }D(\tilde\Omega).
\]
An easy support argument shows that 
the $\alpha_\Delta$ are uniquely determined once we have fixed the
polyhedral complex $\KD$.
This implies that $\alpha$ is symmetric if and only if each superform
$\alpha_\Delta$ is symmetric.
It is furthermore a direct consequence of 
Proposition \ref{positivity properties of superforms} (e), (f)
that $\alpha$ is (weakly/strongly) positive if and only if 
each $\alpha_\Delta$ is (weakly/strongly) positive.
\end{ex}

\begin{ex} \label{convex function}
Let $\phi: N_\R \rightarrow \R$ be a piecewise smooth function. 
By definition, there is an integral $\R$-affine polyhedral complex $\KC$ with support $N_\R$ 
and a family $(\phi_\sigma)_{\sigma\in\KC}$ of
smooth functions $\phi_\sigma: \sigma\to \R$
such that $\phi|_{\sigma}=\phi_\sigma$ 
for all $\sigma\in\KC$.
The following properties are equivalent for a {convex} open subset $\tilde\Omega$ of $N_\R$:
\begin{enumerate}
\item[(i)]
The function $\phi$ is convex on $\tilde\Omega$.
\item[(ii)]
The supercurrent $d'd''[\phi|_{\tilde\Omega}]$ is positive on $\tilde\Omega$.
\item[(iii)]
Each function $\phi_\sigma$ is convex on $\tilde\Omega$ and the corner locus
$\phi\cdot N_\R$ is effective on $\tilde\Omega$.
\end{enumerate}
Here, the corner locus $\phi \cdot N_\R$ is a tropical cycle  on $N_\R$ of codimension $1$ which might be viewed as the tropical Weil divisor associated to the piecewise smooth function $\phi$ (see \cite[Def. 1.10]{gubler-kuennemann} for details). 
\end{ex}

\proof
The equivalence of (i) and (ii) is \cite[Prop. 2.5]{lagerberg-2012}.
We have 
\[
d'd''[\phi]=\sum_{\sigma}(d'd''\phi_\sigma)\wedge\delta_\sigma
+\delta_{\phi\cdot N_{\R}}
\] 
by \cite[Cor. 3.20]{gubler-kuennemann} where
$\sigma$ runs over the maximal polyhedra in $\KC$. 
It follows that the supercurrent $d'd''[\phi]$ is polyhedral.
Then the equivalence of (ii) and (iii) follows from  Example \ref{polyhedral supercurrent}.
\qed

\begin{art} \label{delta-preformen euklidisch}
Let $\tilde\Omega$ be an open subset of $N_\R$. Recall from \cite[Def. 2.9]{gubler-kuennemann} that a $\delta$-preform on $\tilde\Omega$ is 
a supercurrent $\alpha \in D(\tilde\Omega)$ of the form
$\alpha = \sum_{i \in I} \alpha_i \wedge \delta_{C_i}$, 
where $I$ is a finite set, $\delta_{C_i}$ is the supercurrent of integration over a tropical cycle $C_i$ in $N_\R$ with smooth weights and $\alpha_i$ is a superform on $\tilde\Omega$. The wedge product of superforms and the tropical intersection product makes $P^{\cdot, \cdot}(\tilde\Omega)$ into a bigraded algebra \cite[Prop. 2.12]{gubler-kuennemann}. By definition, the bigrading and the positivity notions are induced by the corresponding notions in $D^{\cdot, \cdot}(\tilde\Omega)$.
Note that every $\delta$-preform is a polyhedral supercurrent (see Example \ref{polyhedral supercurrent}).

Explicitly, the $\delta$-preform $\alpha$ has bidegree $(p,q)$ if and only if $\alpha_i \in A^{p_i,q_i}(\tilde\Omega)$ and $C_i$ is a tropical cycle of codimension $l_i$ in $N_\R$ with $p_i+l_i=p$ and $q_i + l_i = q$ for all $i \in I$. We say that the $\delta$-preform $\alpha$ has {\it codimension}  $l$ if there is a decomposition with all $C_i$ of codimension $l$. We define a trigrading  $P^{s,t,l}(\tilde\Omega)$ considering all $\delta$-preforms $\alpha$ as above with $p_i=s$, $q_i=t$ and $l_i=l$.  
\end{art}

In the following, we consider an open subset $\Omega$ of the support $|\Ccal|$ for the $n$-dimensional tropical cycle $C=(\KC,m)$ in $N_\R$ with positive constant weights. The goal is to transfer the above notions to this relative situation. 

\begin{art} \label{delta-preformen}
We choose an open subset $\tilde\Omega$ in $N_\R$ with $\Omega=\tilde\Omega\cap |\KC|$.  
The bigraded algebra of superforms on $\Omega$ is defined by $A(\Omega):=
\{ {\eta|_{\Omega}} \mid  {\eta} \in A(\tilde\Omega)\}$, the subalgebra of superforms with compact support in $\Omega$ is denoted by $A_c(\Omega)$.

 {A linear functional  $T \in \Hom_\R(A_c(\Omega),\R)$ is called a {\it supercurrent on  $\Omega$} if there is $\tilde{T} \in D(\tilde\Omega)$ such that $T(\eta|_\Omega) = \tilde{T}(\eta)$ for all $\eta \in A_c(\tilde\Omega)$. We will identify $D(\Omega)$ with a subspace of $D(\tilde\Omega)$ using the canonical map $T \mapsto \tilde{T}$.
}

A partition of unity argument shows that these definitions do not depend on the choice of $\tilde\Omega$ and the same holds for all definitions below (see \cite[3.1-3.3]{gubler-kuennemann} for details). Note that on $A(\Omega)$ and dually on $D(\Omega)$, we have canonical differential operators $d',d''$ which are analogues of $\partial$ and $\bar\partial$ in complex analysis (see \cite[1.4.7]{chambert-loir-ducros} or \cite[3.2]{gubler-forms}). 

 {(i) For $\tilde\alpha \in P(\tilde\Omega)$, we note that $\tilde\alpha\wedge \delta_C$ is a polyhedral supercurrent as in Example \ref{polyhedral supercurrent} such that the supporting polyhedra are contained in  $|\KC|$ and hence $\tilde\alpha\wedge \delta_C$ is contained in the subspace $D(\Omega)$ of $D(\tilde\Omega)$. Such elements of $D(\Omega)$ are called {\it $\delta$-preforms on $\Omega$} and $P(\Omega)$ denotes the space of $\delta$-preforms on $\Omega$.}

(ii) We give $P(\Omega)$ the unique structure as a bigraded algebra such that the surjective map $P(\tilde\Omega) \to P(\Omega)$ is a homomorphism of bigraded algebras. Similarly, we define the grading by {\it codimension} on $P(\Omega)$ and the trigrading $P^{s,t,l}(\Omega)$ with $l$ the codimension and $(p,q)=(s+l,t+l)$ the type of the $\delta$-preform. 

(iii) A $\delta$-preform $\alpha\in P^{p,p}(\Omega)$ is called 
{\it (weakly/strongly) positive} if and only if $\alpha$ induces
a (weakly/strongly) positive supercurrent in $D^{p,p}(\tilde\Omega)$.

(iv)
Note that $\delta$-preforms on $\Omega$ of codimension $0$ are the same as 
superforms on $\Omega$. In particular, the above gives the corresponding positivity notions for superforms on $\Omega$.
\end{art}

\begin{prop} \label{positivity properties of preforms} 
For an open subset $\Omega$ of $|\KC|$, we have the following properties:
\begin{itemize} 
 \item[(a)] 
$P^{p,p}_{\rm s+}(\Omega) \subseteq P^{p,p}_{\rm + }(\Omega) \subseteq P^{p,p}_{\rm w+}(\Omega)$.
 \item[(b)] 
For $p=0,1,n-1,n$, equality holds everywhere in (a) and more precisely
\begin{eqnarray*}
P^{t+l,t+l}_{\rm s+}(\Omega)\cap P^{t,t,l}(\Omega) 
&=& P^{t+l,t+l}_{\rm + }(\Omega) \cap P^{t,t,l}(\Omega) \\
&=& P^{t+l,t+l}_{\rm w+}(\Omega)\cap P^{t,t,l}(\Omega) 
\end{eqnarray*}
for all $l$ and $t=0,1,n-l-1,n-l$.
\end{itemize}
If $\Omega$ is an open subset of $N_\R$ (and $C=N_\R$ with weight $1$), then the following 
additional properties hold:
\begin{itemize}
\item[(c)] 
The pull-back of a (weakly/strongly) positive $\delta$-preform on $\Omega$ 
with respect to an affine map is a (weakly/strongly) positive $\delta$-form.
 \item[(d)] 
{The wedge product of a strongly positive $\delta$-preform with 
a (weakly/strong\-ly) positive $\delta$-preform on $\Omega$ 
is (weakly/strongly) positive.}
 \end{itemize}
\end{prop}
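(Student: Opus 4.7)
The unifying strategy is to represent every $\delta$-preform on $\Omega$ as a polyhedral supercurrent on $\tilde\Omega$ in the sense of Example \ref{polyhedral supercurrent} and then reduce each assertion to the corresponding property for superforms, namely Proposition \ref{positivity properties of superforms}. For (a), definition \ref{delta-preformen}(iii) places us in this setting: positivity of a $\delta$-preform is inherited from the induced supercurrent on $\tilde\Omega$. A strongly positive supercurrent pairs non-negatively with the whole class of weakly positive test forms, hence a fortiori with the smaller classes of positive and strongly positive test forms, so the inclusion $P^{p,p}_{\rm s+}(\Omega)\subseteq P^{p,p}_{+}(\Omega)\subseteq P^{p,p}_{\rm w+}(\Omega)$ follows from Proposition \ref{positivity properties of superforms}(a) applied to the spaces of test forms.

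For (b), I choose an integral $\R$-affine polyhedral complex $\Dcal$ simultaneously refining $\Ccal$ and the polyhedral complexes of the tropical cycles appearing in some representation of $\alpha\in P^{t+l,t+l}(\Omega)\cap P^{t,t,l}(\Omega)$, and write
\[
\alpha=\sum_{\sigma}\alpha_\sigma\wedge\delta_\sigma,
\]
the sum running over the maximal faces $\sigma\in\Dcal$ contained in $|\Ccal|$, each of dimension $n-l$, with $\alpha_\sigma$ a symmetric $(t,t)$-superform on $\tilde\Omega\cap\sigma$; the positive constant weights of $C$ contribute only positive scalars on each face and may be absorbed into $\alpha_\sigma$. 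By Example \ref{polyhedral supercurrent}, each of the three positivity notions for $\alpha$ is equivalent to the same notion for every $\alpha_\sigma$ on $\relint(\sigma)\subseteq \Acal_\sigma$; since $\dim\Acal_\sigma=n-l$, Proposition \ref{positivity properties of superforms}(b) identifies these three notions in bidegree $(t,t)$ precisely for $t\in\{0,1,n-l-1,n-l\}$, which is (b).

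For (c), with $\Omega$ open in $N_\R$ and $C=N_\R$, a $\delta$-preform is itself a polyhedral supercurrent on $\Omega$; the pullback along an integral $\R$-affine map $F\colon N'_\R\to N_\R$ sends each piece $\alpha_\sigma\wedge\delta_\sigma$ to $F^*\alpha_\sigma\wedge\delta_{F^{-1}(\sigma)}$, with multiplicities absorbed into the smooth weights, so Proposition \ref{positivity properties of superforms}(c) combined with Example \ref{polyhedral supercurrent} yields preservation of positivity. For (d), I represent the strongly positive $\alpha$ and the (weakly/strongly) positive $\beta$ as polyhedral supercurrents for a common refinement and apply the wedge product formula of \cite[Prop.~2.12]{gubler-kuennemann}, which multiplies the superform factors and stably intersects the supporting polyhedra. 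On each face of the stable intersection the resulting superform coefficient is, up to a non-negative multiplicity, the wedge product of a strongly positive superform with a (weakly/strongly) positive one, hence (weakly/strongly) positive by Proposition \ref{positivity properties of superforms}(d); Example \ref{polyhedral supercurrent} then concludes that $\alpha\wedge\beta$ is (weakly/strongly) positive.

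The main obstacle I expect lies in (d): I must verify that the multiplicities produced by the stable tropical intersection are non-negative whenever one factor is strongly positive, so that the polyhedral expansion of $\alpha\wedge\beta$ is a non-negative combination of the positive superforms to which Proposition \ref{positivity properties of superforms}(d) applies. This requires exploiting the concrete form of strongly positive $\delta$-preforms (a sum of products of the shape $\alpha_1\wedge J(\alpha_1)\wedge\cdots$ attached to positively weighted supports) rather than the abstract characterization by pairing.
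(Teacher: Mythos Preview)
Your approach is essentially the same as the paper's: reduce everything to Proposition~\ref{positivity properties of superforms} via the polyhedral decomposition of Example~\ref{polyhedral supercurrent}. The paper's own proof is a terse two lines pointing to that proposition and to the explicit formulas (2.12.3) and (2.12.5) of \cite{gubler-kuennemann}; your write-up simply unpacks this.

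Your stated obstacle in (d), however, is not a real obstacle, and your proposed way around it (exploiting the concrete shape of strongly positive $\delta$-preforms) is unnecessary. The product formula (2.12.3) in \cite{gubler-kuennemann} gives the polyhedral representation of $\alpha\wedge\beta$ directly as
\[
\alpha\wedge\beta=\sum_{\tau}\sum_{\Delta,\Delta'}[N:N_\Delta+N_{\Delta'}]\cdot\alpha_\Delta\wedge\beta_{\Delta'}\wedge\delta_\tau,
\]
where the inner sum runs over pairs $(\Delta,\Delta')$ with $\tau=\Delta\cap\Delta'$ and $\Delta\cap(\Delta'+\varepsilon v)\neq\emptyset$ for a generic $v$. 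The multiplicities $[N:N_\Delta+N_{\Delta'}]$ are lattice indices, hence \emph{always} positive integers; no hypothesis on $\alpha$ or $\beta$ is needed for this. Since Example~\ref{polyhedral supercurrent} already tells you that each $\alpha_\Delta$ is strongly positive and each $\beta_{\Delta'}$ is (weakly/strongly) positive, Proposition~\ref{positivity properties of superforms}(d) applied facewise finishes immediately. The stable intersection does not introduce any signs here.
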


\proof 
Properties (a) and (b) follow immediately from Proposition 
\ref{positivity properties of superforms}. 
If $\Omega$ is an open subset of $N_\R$, then the stable intersection product 
and the pull-back with respect to affine maps are well-defined as $\delta$-preforms. 
To prove (c) (resp. (d)), we use also (2.12.3) (resp. (2.12.5)) in 
\cite{gubler-kuennemann}. \qed

\begin{lem} \label{positivity of preforms and covering}
Let $F:N_\R' \rightarrow N_\R$ be an integral $\R$-affine map and  
 {$C'=(\KC',m')$}  an effective $n$-dimensional tropical cycle on $N_\R'$ 
with constant weights.
 {Write $F_*(C')=:C=(\KC,m)$ and} consider an open subset $\tilde\Omega$ of 
$N_\R$ and $\tilde\beta \in P^{p,p}(\tilde\Omega)$. 
For $\Omega := \tilde\Omega \cap |\KC|$, the $\delta$-preform 
$\beta := \tilde\beta \wedge \delta_C$ is (weakly/strongly) positive on $\Omega$ 
if $\beta':=F^*(\tilde\beta) \wedge \delta_{C'}$ is (weakly/strongly) 
positive on $\Omega':=F^{-1}(\Omega) \cap | {\KC'}|$. 
\end{lem}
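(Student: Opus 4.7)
The plan is to realize $\beta$ as the push-forward $F_*\beta'$ under $F$, and then deduce positivity of $\beta$ from positivity of $\beta'$ via the adjunction formula $\langle F_*\beta', \eta\rangle = \langle \beta', F^*\eta\rangle$ together with the fact from Proposition~\ref{positivity properties of superforms}(c) that pull-back preserves positivity classes of superforms.

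The first step would be to establish the push-forward identity $F_*\beta' = \beta$. Writing $\tilde\beta = \sum_i \tilde\alpha_i \wedge \delta_{D_i}$ and using the compatibility of $F^*$ with wedge products and with $\delta$-cycles, one has
\[
\beta' \;=\; F^*\tilde\beta \wedge \delta_{C'} \;=\; \sum_i F^*\tilde\alpha_i \wedge \delta_{F^*D_i \cdot C'}.
\]
The map $F$ is proper on $|\KC'|$, which is built into the hypothesis that $F_*C' = C$ exists, so $F_*$ is defined on $\beta'$. Applying $F_*$ and using, for each $i$, the formula $F_*(F^*\tilde\alpha_i \wedge \delta_{F^*D_i \cdot C'}) = \tilde\alpha_i \wedge \delta_{F_*(F^*D_i \cdot C')}$ together with the tropical projection formula $F_*(F^*D_i \cdot C') = D_i \cdot F_*C' = D_i \cdot C$ from \cite{gubler-kuennemann}, one obtains
\[
F_*\beta' \;=\; \sum_i \tilde\alpha_i \wedge \delta_{D_i \cdot C} \;=\; \tilde\beta \wedge \delta_C \;=\; \beta.
\]

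Next, I would test positivity of $\beta$ by pairing against a compactly supported superform $\eta \in A_c(\tilde\Omega)$ of bidegree complementary to $\beta$ and of the appropriate dual positivity class (in the sense of \ref{positive superforms}(vi)): for verifying positivity of $\beta$ we take $\eta$ positive, for weak positivity of $\beta$ we take $\eta$ strongly positive, and for strong positivity of $\beta$ we take $\eta$ weakly positive. By Proposition~\ref{positivity properties of superforms}(c), the pullback $F^*\eta$ on $\tilde\Omega' := F^{-1}(\tilde\Omega)$ has the same positivity class as $\eta$. Because $F$ is proper on $|\KC'|$, the set $F^{-1}(\supp \eta) \cap |\KC'|$ is compact, so one can choose a non-negative smooth function $\chi$ on $N_\R'$ with compact support in $\tilde\Omega'$ and $\chi \equiv 1$ on a neighborhood of this compact set. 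Multiplication by the non-negative function $\chi$ preserves the positivity class, so $\chi \cdot F^*\eta \in A_c(\tilde\Omega')$ is an admissible test form for the positivity of $\beta'$ on $\Omega'$.

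Combining the push-forward identity with the adjunction $\langle F_*T, \eta\rangle = \langle T, F^*\eta\rangle$ and using that $\beta'$ is supported on $|\KC'|$, where $\chi \equiv 1$, one then gets
\[
\langle \beta, \eta\rangle \;=\; \langle F_*\beta', \eta\rangle \;=\; \langle \beta', F^*\eta\rangle \;=\; \langle \beta', \chi \cdot F^*\eta\rangle \;\geq\; 0,
\]
the last inequality by the assumed positivity of $\beta'$ on $\Omega'$. Since this holds for every admissible test form $\eta$, it follows that $\beta$ is (weakly/strongly) positive on $\Omega$. The main technical obstacle is the push-forward identity of the first step, which requires a careful invocation of the compatibility of $F^*$ and $F_*$ with the tropical intersection product and with the $\delta$-preform formalism of \cite{gubler-kuennemann}; the cutoff argument via $\chi$ is routine but necessary to handle the possible non-properness of $F$ outside $|\KC'|$, and one should verify that taking $\tilde\Omega' = F^{-1}(\tilde\Omega)$ is consistent with the convention used to define positivity on $\Omega'$.
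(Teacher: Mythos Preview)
Your strategy via push-forward and duality is conceptually clean and genuinely different from the paper's polyhedral-decomposition argument, but there is a real gap: the assertion that ``$F$ is proper on $|\KC'|$, which is built into the hypothesis that $F_*C'=C$ exists'' is not justified. The push-forward of tropical cycles under an integral $\R$-affine map is defined without any properness assumption; faces $\sigma'$ on which $F$ drops dimension simply contribute zero to $F_*C'$. So $C'$ may well contain maximal faces along which $F$ has a nontrivial kernel, and then $F^{-1}(\supp\eta)\cap|\KC'|$ is non-compact and no cutoff $\chi$ with the properties you require exists. This breaks both the definition of $F_*\beta'$ as a supercurrent and the equality $\langle\beta',F^*\eta\rangle=\langle\beta',\chi F^*\eta\rangle$.

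The argument can be repaired, but the repair essentially forces you back to the paper's method. One has to observe that in the polyhedral decomposition $\beta'=\sum_{\sigma'}\alpha_{\sigma'}\wedge\delta_{\sigma'}$, each $\alpha_{\sigma'}$ is the restriction of a superform pulled back through $F$; hence on a collapsed face $\sigma'$ (where $\dim F(\sigma')<\dim\sigma'$) the top-degree form $\alpha_{\sigma'}\wedge F^*\eta|_{\sigma'}$ is the pull-back of a top-degree form from a lower-dimensional space, so it vanishes and the pairing receives no contribution from that face. Restricted to the non-collapsed faces $F$ is injective, hence proper, and your cutoff argument goes through there. But once you are working face-by-face with the $\alpha_{\sigma'}$, you are precisely in the setup of the paper's proof: use Example~\ref{polyhedral supercurrent} to translate positivity of $\beta'$ into positivity of each $\alpha_{\sigma'}$, then use the explicit formula~\eqref{pre_projection_formula} (which only sums over non-collapsed $\sigma'$) to conclude positivity of each $\alpha_\sigma$. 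The paper's route thus sidesteps the properness issue entirely by never forming $F_*\beta'$ as a supercurrent.
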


\begin{proof} We are here essentially in the setup of the projection formula 
given in \cite[Prop. 2.14]{gubler-kuennemann}. 
We may assume without loss of generality that $\tilde\beta$
is a $\delta$-preform of pure codimension $l$. 
Hence we can write
\[
\tilde\beta=\sum_{i\in I}\alpha_i\wedge\delta_{C_i}
\]
for suitable $\alpha_i\in A^{p-l,p-l}(\tilde\Omega)$ and tropical cycles
$C_i$ of codimension $l$ in $N_\R$.
After suitable refinements we may assume that 
$F_*(\KC')$ is a polyhedral subcomplex of $\KC$ and that
$\KC$ and $\KC'$ are
polyhedral complexes of definition for $\tilde\beta$ and $\tilde\beta'$.
We get polyhedral decompositions 
\begin{eqnarray*}
&\beta=\tilde\beta\wedge \delta_C=\tilde\beta\wedge \delta_{F_*C'}
=\sum\limits_{\sigma\in \KC_{n-l}}\alpha_\sigma\wedge\delta_\sigma,&\\
&\beta'=F^*(\tilde\beta)\wedge\delta_{C'}=\sum\limits_{\sigma'\in \KC'_{n-l}}\alpha_{\sigma'}\wedge\delta_{\sigma'}&
\end{eqnarray*}
as in \cite[(2.14.3), (2.14.4)]{gubler-kuennemann}. 
Given $\sigma\in \KC_{n-l}$, we have as in \emph{loc. cit.}
\begin{equation}\label{pre_projection_formula}
\alpha_\sigma=\sum\limits_{
\genfrac{}{}{0pt}{}{\sigma'\in \KC_{n-l}'}{F(\sigma')=\sigma}}
\bigl[N_\sigma:\L_F(N_\sigma')\bigr]\cdot \tilde\alpha_{\sigma'}
\end{equation}
where $\tilde\alpha_{\sigma'}$ denotes the unique superform in 
$A_\sigma(\sigma\cap\tilde\Omega)$ such that
$F^*(\tilde\alpha_{\sigma'})=\alpha_{\sigma'}$ in
$ A_{\sigma'}(\sigma'\cap F^{-1}(\tilde\Omega))$.
Now assume that $\beta'$ is positive.
This implies by definition that all the $\alpha_{\sigma'}$ are
positive.
If $F(\sigma')=\sigma$ then $F$ induces an isomorphism from $\sigma'$ to 
$\sigma$ and $\tilde\alpha_{\sigma'}$ is positive.
Then formula \eqref{pre_projection_formula} implies that $\alpha_\sigma$
is positive as well.
In the same way one proves the variants  for weak and strong positivity.
\end{proof}

\begin{definition} \label{supercurrents} \rm
Recall that the tropical cycle $C=(\KC,m)$ has dimension $n$. 
Let $\Omega$ be an open subset of $|\KC|$. 
A supercurrent $T \in D^{p,p}(\Omega)$ is called {\it (weakly/strongly) positive} if 
$
\langle T, \alpha \rangle \geq 0
$
holds for all (strongly/weakly) positive superforms $\alpha \in A_c^{n-p,n-p}(\Omega)$. 
The corresponding spaces of (weakly/strongly) positive supercurrents are 
denoted by $D_{+}^{p,p}(\Omega)$, $D_{\rm w+}^{p,p}(\Omega)$ and $D_{\rm s+}^{p,p}(\Omega)$.
\end{definition}

\begin{rem} \label{positive delta-forms vs supercurrents}
From Proposition \ref{positivity properties of preforms}, we obtain immediately
\[
D^{p,p}_{\rm s+}(\Omega) \subseteq D^{p,p}_{\rm + }(\Omega) \subseteq D^{p,p}_{\rm w+}(\Omega)
\]
with equalities for $p=0,1,n-1,n$.

A superform on $\Omega$ is (weakly/strongly) positive if and only if its associated supercurrent is (weakly/\-strongly) positive in $D(\Omega)$. More generally,
a $\delta$-preform on $\Omega$ is (weakly/strongly) positive if and only if its associated supercurrent has the same positivity property in $D(\Omega)$. The proof is similar to the  proof of Proposition \ref{functorial current positive and positive delta-forms} and we leave the details to the reader. 
\end{rem}

\begin{rem}\label{posnegtrop}
(i) Let $\tilde\Omega$ be an open subset of $N_\R$ and $p\in \N$.
If $\alpha\in A^{p,p}(\tilde\Omega)$ is a superform
such that $\alpha$ and $-\alpha$ are weakly positive, then  
$\alpha=0$. This follows from the fact that every symmetric superform in $A^{p,p}(\tilde\Omega)$ is 
the difference of two strongly positive elements in $A^{p,p}(\tilde\Omega)$ \cite[Lemme 5.2.3]{chambert-loir-ducros} 
and from the duality in Proposition \ref{positivity properties of superforms}(e).

(ii) 
Recall that $\Omega$ is an open subset of $|\KC|$ for the effective tropical cycle $C=(\KC,m)$ with constant weights. 
We consider a $\delta$-preform $\alpha \in P^{p,p}(\Omega)$. 
After a subdivision of $\KC$ we may write 
\begin{equation}\label{polreppos}
\alpha=\sum_{\Delta \in \KC} \alpha_\Delta \delta_\Delta \in D(\Omega)
\end{equation}
with superforms $\alpha_\Delta$ on the open subsets 
$\Omega \cap \Delta$ of $\Delta$. 
The representation 
\eqref{polreppos} is unique up to subdivision. 
Now assume that  $\alpha$ and $-\alpha$
are weakly positive. Then each $\alpha_\Delta$ must
be weakly positive and (i) implies $\alpha=0$.
\end{rem}

\section{Positive delta-forms and delta-currents} \label{positi}

Let $K$ be  an algebraically closed field endowed with a complete non-trivial non-archimedean absolute value $|\phantom{a}|$. In the following, we will always work in this setup except in Section \ref{lifti}.

Let $X$ be an algebraic variety of dimension $n$ over $K$. 
For an open subset $W$ of the Berkovich space $\Xan$, we use the bigraded algebra of generalized $\delta$-forms $P^{\cdot,\cdot}(W)$ and its bigraded subalgebra 
$B^{\cdot,\cdot}(W)$ of $\delta$-forms. The latter is a differential algebra with respect to differential operators 
$d',d''$ and behaves similarly as the algebra of differential forms on a complex manifold with respect to $\partial,\bar \partial$ (see \cite[\S 4]{gubler-kuennemann} for details). 
As a topological dual of $B_c(W)$, we have  the space of $\delta$-currents $E(W)$ (see \cite[\S 6]{gubler-kuennemann}), where  the subscript $c$ means always compact support. 
The smooth forms from  \cite[\S 3]{chambert-loir-ducros} give a bigraded differential subalgebra $A^{\cdot,\cdot}(W)$ of $B^{\cdot,\cdot}(W)$ inducing a canonical linear map from $E(W)$ to the space of currents $D(W)$, where the latter is defined as a topological dual of $A_c(W)$ in \cite[\S 4]{chambert-loir-ducros}. 

The goal of this section is to transfer the positivity notions from Section \ref{posit} to the sheaves $B^{\cdot,\cdot}, P^{\cdot,\cdot}, E^{\cdot,\cdot}$ and to compare them with the positivity notions on $A^{\cdot,\cdot}, D^{\cdot,\cdot}$ introduced in  \cite[\S 5]{chambert-loir-ducros}

\begin{art} \label{building block of delta-preform}
We start with a {\it tropical chart} $(V,\varphi_U)$ on $\Xan$. Recall from \cite[4.15]{gubler-forms} that this is a very affine open subset $U$ of $X$ with a canonical closed embedding $\varphi_U:U \rightarrow T_U$ into the torus $T_U$ with the character lattice $M_U = \Ocal(U)^\times/K^\times$ and an open subset $V:=\trop_U^{-1}(\Omega)$ for an open subset  {$\Omega$} of the tropical variety $\Trop(U)=\trop_U(\Uan)$. Here, we have used   the tropicalization map $\trop_U:\Uan \to N_{U,\R}$ for the cocharacter lattice $N_U=\Hom(M_U,\Z)$. Note that tropical charts form a basis of topology for $\Xan$ \cite[Prop. 4.16]{gubler-forms}.

 {In \cite[4.4]{gubler-kuennemann}, the bigraded algebra $P^{\cdot,\cdot}(V,\varphi_U) := P^{\cdot,\cdot}(\tilde\Omega)/N^{\cdot,\cdot}(V,\varphi_U)$ was defined for an open subset $\tilde\Omega$ of $N_{U,\R}$ with $\Omega = \tilde\Omega \cap \Trop(U)$, where $N^{\cdot,\cdot}(V,\varphi_U)$ includes the kernel of the homomorphism $P(\tilde\Omega)\to P(\Omega)$ from \ref{delta-preformen}(i) and the precise definition 
 {in \emph{loc.~cit.~}}takes this into account in a functorial way.} 
In fact, every 
generalized $\delta$-form 
 is locally given by elements $\beta_U$ in such a  $P^{\cdot,\cdot}(V,\varphi_U)$. If $\beta_U$ is represented by a $\delta$-preform of codimension $l$, then we say that $\beta_U$ has {\it codimension} $l$. The grading by codimension $l$ leads to subspaces $P^{s,t,l}(U,\varphi_U)$ of $P^{s+l,t+l}(U,\varphi_U)$ and hence to subspaces $P^{s,t,l}(W)$ of $P^{s+l,t+l}(W)$. 
\end{art}

\begin{definition}\label{positivity on tropical charts}   
For every morphism $f:X'\to X$ of varieties over $K$ and any  pair of
charts $(V',\varphi_{U'})$ on $X'^\an$ and $(V,\varphi_U)$ on $X^\an$ with $f(U') \subseteq U$ and $f(V') \subseteq V$, 
we have a pull-back $f^*:P^{p,p}(V,\varphi_U) \rightarrow P^{p,p}(V',\varphi_{U'})$ 
 {induced by the canonical affine map $F:N_{U',\R}\to N_{U,\R}$. We use also the 
 restriction map
$P^{p,p}(V',\varphi_{U'}) \rightarrow P^{p,p}(\Omega')$, 
$\beta'\mapsto \beta'|_{\Omega'}$} 
to $\delta$-preforms on  $\Omega' := \trop_{U'}(V')$  {which is induced by wedge product with $\delta_{\Trop(U')}$.}

(i) We say that $\beta \in P^{p,p}(V,\varphi_U)$ is 
{\it (weakly/strongly) positive} if $f^*(\beta)|_{\Omega'}$ 
is a (weakly/strongly) positive $\delta$-preform on 
 {the open subset $\Omega'$ of the tropical cycle $\Trop(U')$ in the sense of \ref{delta-preformen}} 
for every $f:X' \to X$ and every  $(V',\varphi_{U'})$ as above. 
These forms yield subspaces 
\begin{equation}\label{subspacesforP}
P^{p,p}_{\rm s+}(V,\varphi_U) \subseteq P^{p,p}_{\rm + }(V,\varphi_U) \subseteq P^{p,p}_{\rm w+}(V,\varphi_U)
\end{equation}
of $P^{p,p}(V,\varphi_U)$ with equality in the cases $p=0,1,n-1,n$. Obviously, all these positivity notions 
are stable with respect to pull-back $f^*:P^{p,p}(V,\varphi_U) \rightarrow P^{p,p}(V',\varphi_{U'})$ for any $f$ as above.

 {(ii)} 
 {We say that $\beta \in P^{p,p}(V,\varphi_U)$ is \emph{represented by}
$\tilde\beta\in P^{p,p}(\tilde\Omega)$ if $\beta$ is the class of
$\tilde\beta$ in
$P^{p,p}(V,\varphi_U)= P^{p,p}(\tilde\Omega)/N^{p,p}(V,\varphi_U)$.}
We say that $\beta \in P^{p,p}(V,\varphi_U)$ is {\it (weakly/strongly) 
positively {representable}} if there exists an open subset
$\tilde\Omega$ of  {$N_{U,\R}$} with  $\Omega=\tilde\Omega\cap \Trop(U)$ and a 
(weakly/ strongly) positive $\tilde{\beta} \in P^{p,p}(\tilde\Omega)$ 
\emph{representing} $\beta$ . 
{In this case we call $\tilde\beta$ a
\emph{positive representative of $\beta$}}.
\end{definition}

\begin{art}
 {If $\beta$ and $\tilde\beta$ are as in (ii), then $f^*(\beta)$ is represented by $F^*(\tilde\beta)$ and hence $f^*(\beta)|_{\Omega'}= F^*(\tilde\beta) \wedge \delta_{\Trop(U')}$ on $\Omega'$.  By Proposition \ref{positivity properties of preforms}, a (weakly/strongly) positively representable element
$\beta \in P^{p,p}(V,\varphi_U)$ is  (weakly/\-strongly) positive.}

 {Similarly, we see that the notion of (weakly/strongly) positive
representability  
in $P^{p,p}(V,\varphi_U)$ is closed under  pull-back  
and that the wedge-product 
of a strongly positively representable element with a (weakly/strongly) positively representable element  is (weakly/strongly) positively 
representable.}  
\end{art}

\begin{lem} \label{positivity on tropical charts and coverings}
Let $f:X' \rightarrow X$ be a generically finite dominant morphism of 
varieties over $K$ , let $(V,\varphi_U)$ be a tropical chart on $\Xan$ 
and let $U'$ be a very affine open subset of $X'$ with $f(U') \subseteq U$. 
Then $(V',\varphi_{U'})$ is a tropical chart on $X'^\an$ for $V':=f^{-1}(V) \cap U'^\an$. 
Moreover,  $\beta \in P^{p,p}(V,\varphi_U)$ is (weakly/strongly) positive 
if and only if  $f^*(\beta) \in P^{p,p}(V',\varphi_{U'})$ is (weakly/strongly) positive. 
\end{lem}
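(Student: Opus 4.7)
The plan is to verify two assertions. First, that $(V',\varphi_{U'})$ is a tropical chart: the morphism $f|_{U'}:U'\to U$ induces a homomorphism of character lattices $M_U\to M_{U'}$ and dually an integral $\R$-affine map $F:N_{U',\R}\to N_{U,\R}$ satisfying $\trop_U\circ f^{\an}|_{(U')^{\an}}=F\circ\trop_{U'}$. Hence
\[
V'=f^{-1}(\trop_U^{-1}(\Omega))\cap(U')^{\an}=\trop_{U'}^{-1}\!\bigl(F^{-1}(\Omega)\cap\Trop(U')\bigr),
\]
so setting $\Omega':=F^{-1}(\Omega)\cap\Trop(U')$ exhibits $(V',\varphi_{U'})$ as a tropical chart on $X'^{\an}$. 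The ``only if'' direction of the positivity equivalence is immediate from the functoriality of positivity under pull-back noted at the end of Definition \ref{positivity on tropical charts}: for any $h:Z\to X'$ with a compatible tropical chart, $h^*(f^*\beta)|_{\Omega_Z}=(f\circ h)^*(\beta)|_{\Omega_Z}$, and the right-hand side is positive because $\beta$ is.

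For the ``if'' direction, assume $f^*\beta$ is positive and let $g:Y\to X$ together with a compatible tropical chart $(V_Y,\varphi_{U_Y})$ be an arbitrary test-datum for the positivity of $\beta$; I must show $g^*\beta|_{\Omega_Y}$ is positive. My approach is a fibre-product bridge: pick an irreducible component $Y'\subseteq Y\times_X X'$ whose first projection $p:Y'\to Y$ is dominant. Because $f$ is generically finite and dominant, so is $p$, and in particular $\dim Y'=\dim Y$. Write $q:Y'\to X'$ for the second projection. Using that very affine opens form a basis, choose a very affine open $U_{Y'}\subseteq p^{-1}(U_Y)\cap q^{-1}(U')$ in $Y'$. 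Letting $P:N_{U_{Y'},\R}\to N_{U_Y,\R}$ and $Q:N_{U_{Y'},\R}\to N_{U',\R}$ be the induced integral $\R$-affine maps, set $\Omega_{Y'}:=P^{-1}(\Omega_Y)\cap Q^{-1}(\Omega')\cap\Trop(U_{Y'})$ and $V_{Y'}:=\trop_{U_{Y'}}^{-1}(\Omega_{Y'})$; then $(V_{Y'},\varphi_{U_{Y'}})$ is a tropical chart with $p(V_{Y'})\subseteq V_Y$ and $q(V_{Y'})\subseteq V'$. Since $g\circ p=f\circ q$, we obtain
\[
p^*(g^*\beta)|_{\Omega_{Y'}}=q^*(f^*\beta)|_{\Omega_{Y'}},
\]
and the right-hand side is positive by the hypothesis on $f^*\beta$ applied with the morphism $q$ and the chart $(V_{Y'},\varphi_{U_{Y'}})$.

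To conclude, I apply Lemma \ref{positivity of preforms and covering} to the map $P$ and to the tropical cycle $\Trop(U_{Y'})$: if $\tilde\gamma$ represents $g^*\beta$, then positivity of $P^*(\tilde\gamma)\wedge\delta_{\Trop(U_{Y'})}$ on $\Omega_{Y'}$ (i.e.\ the identity above) forces positivity of $\tilde\gamma\wedge\delta_{P_*\Trop(U_{Y'})}$ on $\Omega_Y$. By the tropical projection formula for generically finite dominant morphisms of very affine varieties (of Sturmfels--Tevelev type), $P_*\Trop(U_{Y'})=\deg(p|_{U_{Y'}})\cdot\Trop(U_Y)$, so this amounts to positivity of $\deg(p|_{U_{Y'}})\cdot g^*\beta|_{\Omega_Y}$, whence of $g^*\beta|_{\Omega_Y}$ itself. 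The weak and strong positivity variants follow by the same argument. The main obstacle is precisely this last step, namely identifying $P_*\Trop(U_{Y'})$ with a positive integer multiple of $\Trop(U_Y)$ via the tropical projection formula, while matching the constant-weight conventions required by Lemma \ref{positivity of preforms and covering}; the rest of the proof is a routine combination of functoriality of tropicalization and the fibre-product construction.
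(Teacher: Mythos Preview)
Your argument mirrors the paper's: both invoke the Sturmfels--Tevelev identity $F_*(\Trop(U'))=\deg(f)\,\Trop(U)$ and Lemma~\ref{positivity of preforms and covering}, together with stability of the positivity notions under pull-back. The paper is much terser; it does not spell out the fibre-product bridge that you use for the ``if'' direction, but your unpacking is the natural reading of its sentence ``using also that our positivity notions are stable under pull-back, the last claim follows from Lemma~\ref{positivity of preforms and covering}''.

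There is, however, a genuine gap in the bridge as you have written it. You assert that one may pick a component $Y'\subseteq Y\times_X X'$ whose first projection $p:Y'\to Y$ is dominant and generically finite ``because $f$ is generically finite and dominant''. Neither claim is automatic for an arbitrary test morphism $g:Y\to X$. A dominant component exists exactly when $g(\eta_Y)\in f(X')$, and since $f$ need not be surjective this can fail; even when such a component exists, $p$ is generically finite only if $g(\eta_Y)$ lies in the open locus over which $f$ is finite. For a concrete failure take $X=X'=\A^2_K$, $f(x,y)=(x,xy)$, and $g:\A^1_K\to\A^2_K$, $t\mapsto(0,t)$ (with, say, $U=\{x\neq 1,\,y\neq 0\}$ and $U'=\{x\neq 0,1;\,y\neq 0\}$, both very affine and satisfying the required containments): then $Y\times_X X'=\{(t,x,y):x=0,\,t=xy=0\}$ maps to the single point $0\in Y$, so no component dominates $Y$, and no commutative square $g\circ p=f\circ q$ with $p$ dominant exists at all. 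Your argument \emph{is} correct whenever $g$ is dominant, since then $g(\eta_Y)=\eta_X$ and the generic fibre of $Y\times_X X'\to Y$ is $\Spec\bigl(K(X')\otimes_{K(X)}K(Y)\bigr)$, which is finite over $K(Y)$; what is missing is a reduction of the general test datum to this case.
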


\proof 
Let $F:N_{U',\R} \to N_{U,\R}$ be the canonical integral $\R$-affine map induced by $f:U'\to U$. Then 
$\Omega':=F^{-1}(\Omega) \cap \Trop(U')$ is an open subset of $\Trop(U')$ and functoriality of tropicalizations shows 
that $V'=\trop_{U'}^{-1}(\Omega')$. We conclude that $(V',\varphi_{U'})$ is a tropical chart on $X'^\an$. 
The Sturmfels--Tevelev multiplicity formula shows 
\begin{equation} \label{Sturmfels-Tevelev}
F_*(\Trop(U')) = \deg(f)\Trop(U)
\end{equation}
(see \cite{sturmfels-tevelev},
\cite{baker-payne-rabinoff}
or \cite[Thm. 13.17]{gubler-guide}). Using also that our positivity notions are stable under pull-back,  
the last claim follows from 
 Lemma \ref{positivity of preforms and covering}  \qed

\begin{rem}\label{posnegtropchart}
{Let $\alpha\in P^{p,p}(V,\varphi_U)$  such that
$\alpha$ and $-\alpha$ are weakly positive.
Then $\alpha=0$.
This follows from Remark \ref{posnegtrop} applied to all compatible pairs 
of charts as in \ref{positivity on tropical charts}.}
\end{rem}

In the following, $W$ is an open subset of $\Xan$. We introduce the above positivity notions on the space $P^{p,p}(W)$ of generalized $\delta$-forms on $W$. 

\begin{definition} \label{positive delta-forms on W}
A generalized $\delta$-form 
$\beta \in P^{p,p}(W)$ is called {\it (weakly/strongly) positive} if 
at  any given point of $W$ there exists a tropical
chart $(V,\varphi_U)$ such that $V\subseteq W$ and $\beta|_V=\trop_U^*(\beta_U)$
for a (weakly/strongly) positive element $\beta_U\in P^{p,p}(V,\varphi_U)$. Note that such a $\beta_U$ is uniquely determined by $(V,\varphi_U)$  \cite[Prop. 4.18]{gubler-kuennemann}. 
These generalized $\delta$-forms define subspaces 
\begin{equation}\label{Pfiltration}
P^{p,p}_{\rm s+}(W) \subseteq P^{p,p}_{\rm + }(W) \subseteq P^{p,p}_{\rm w+}(W)
\end{equation}
of $P^{p,p}(W)$. 
For $p=0,1,n-1,n$, we have equalities in the above chain.

Similarly, we define  {\it (weakly/strongly) 
positively {representable}} generalized $\delta$-forms in  $P^{p,p}(W)$. {For $p=0,1$, these three positivity notions agree again. All  six positivity notions are  closed under pull-back.}
\end{definition}

\begin{prop} \label{local positivity is fine}
Let $\beta\in P^{p,p}(W)$ be a (weakly/strongly) positive
generalized $\delta$-form on an open subset $W$ of $X^\an$.
Let $(V,\varphi_U)$ be a tropical chart of $X^\an$ such that $V\subseteq W$
and $\beta|_V$ is induced by $\beta_U\in P^{p,p}(V,\varphi_U)$.
Then $\beta_U$ is (weakly/strongly) positive. 
\end{prop}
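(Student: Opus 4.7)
The plan is to verify Definition \ref{positivity on tropical charts}(i) for $\beta_U$: for every morphism $f\colon X'\to X$ of varieties over $K$ and every tropical chart $(V',\varphi_{U'})$ on $X'^\an$ with $f(U')\subseteq U$ and $f(V')\subseteq V$, I must show that the pull-back $f^*(\beta_U)|_{\Omega'}$ is a (weakly/strongly) positive $\delta$-preform on $\Omega':=\trop_{U'}(V')$. By Example \ref{polyhedral supercurrent} and the pointwise nature of positivity of superforms (see \ref{positive superforms}(ii) in the subsequent art), positivity of a $\delta$-preform on $\Omega'$ is a local property, so it suffices to establish positivity in a neighborhood of an arbitrary point $\omega'\in\Omega'$.

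Fix such $\omega'$ and pick $x'\in V'$ with $\trop_{U'}(x')=\omega'$; set $x:=f(x')\in V\subseteq W$. By Definition \ref{positive delta-forms on W}, I obtain a tropical chart $(V_x,\varphi_{U_x})$ with $x\in V_x\subseteq W$ such that $\beta|_{V_x}$ is induced by a (weakly/strongly) positive element $\beta_{U_x}\in P^{p,p}(V_x,\varphi_{U_x})$. Since intersections of very affine open subsets are very affine and tropical charts form a basis of the topology of $X'^\an$, I can find a tropical chart $(V'_y,\varphi_{U'_y})$ on $X'^\an$ around $x'$ with $U'_y\subseteq U'\cap f^{-1}(U_x)$ and $V'_y\subseteq V'\cap f^{-1}(V_x)$. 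In particular $f(U'_y)\subseteq U_x$ and $f(V'_y)\subseteq V_x$, so this chart is simultaneously compatible with $(V,\varphi_U)$ (via the open immersion into $U'$ followed by $f$) and with $(V_x,\varphi_{U_x})$ (via $f$).

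On $(V'_y,\varphi_{U'_y})$, the generalized $\delta$-form $\beta|_{V'_y}$ has a unique representation by \cite[Prop.~4.18]{gubler-kuennemann}. Functoriality of pull-backs applied to the two factorizations $U'_y\hookrightarrow U'\xrightarrow{f}U$ and $U'_y\xrightarrow{f}U_x\hookrightarrow U$ then forces the identity
\[
G^*\bigl(f^*(\beta_U)\bigr)=\bigl(f|_{U'_y}\bigr)^*(\beta_{U_x})\quad\text{in }P^{p,p}(V'_y,\varphi_{U'_y}),
\]
where $G\colon N_{U'_y,\R}\to N_{U',\R}$ is induced by the open immersion $U'_y\hookrightarrow U'$. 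The positivity of $\beta_{U_x}$ in the sense of Definition \ref{positivity on tropical charts}(i), applied to $f$ and $(V'_y,\varphi_{U'_y})$, makes the right-hand side (weakly/strongly) positive after restriction to $\Omega'_y:=\trop_{U'_y}(V'_y)$.

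The final step, which is also the main obstacle, transfers positivity along $G$ from $\Omega'_y\subseteq\Trop(U'_y)$ back to a neighborhood of $\omega'$ in $\Omega'\subseteq\Trop(U')$. I apply Lemma \ref{positivity of preforms and covering} to the open immersion $U'_y\hookrightarrow U'$: the Sturmfels--Tevelev relation \eqref{Sturmfels-Tevelev} with degree one yields $G_*(\Trop(U'_y))=\Trop(U')$, so the hypothesis of the lemma concerns positivity on the full preimage $G^{-1}(\Omega')\cap\Trop(U'_y)$, which is generally strictly larger than $\Omega'_y$. To bridge this gap, I repeat the construction at every point of this preimage: each such point lifts to a point of $V'\cap U'^\an_y\subseteq V'$ whose image under $f$ lies in $V\subseteq W$, where Definition \ref{positive delta-forms on W} provides a further local positive chart; the same uniqueness-and-pullback argument produces local positivity near that point. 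Locality of positivity of $\delta$-preforms on $\Trop(U'_y)$ glues these local positivities to positivity on all of $G^{-1}(\Omega')\cap\Trop(U'_y)$, whereupon Lemma \ref{positivity of preforms and covering} delivers positivity of $f^*(\beta_U)|_{\Omega'}$ near $\omega'$, completing the sketch.
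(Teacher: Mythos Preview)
Your plan to verify Definition \ref{positivity on tropical charts}(i) directly is natural, and the reduction to checking positivity of the $\delta$-preform $f^*(\beta_U)|_{\Omega'}$ locally near each $\omega'\in\Omega'$ is correct. The problem is the ``final step''. To invoke Lemma \ref{positivity of preforms and covering} for the map $G\colon N_{U'_y,\R}\to N_{U',\R}$ you must know positivity of $G^*(f^*(\beta_U))|_{\Trop(U'_y)}$ on the \emph{entire} preimage $G^{-1}(\Omega_0)\cap\Trop(U'_y)$ of some neighbourhood $\Omega_0$ of $\omega'$, not just on $\Omega'_y$. Your proposed remedy --- repeat the construction at every extra point $\omega''$ of this preimage --- does not close: the ``same uniqueness-and-pullback argument'' only yields positivity of $H^*(\gamma)|_{\Trop(U''_z)}$ on some $\Omega''_z$ for a strictly smaller very affine $U''_z\subsetneq U'_y$, whereas what you need is positivity of $\gamma|_{\Trop(U'_y)}$ near $\omega''$. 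Passing from the former to the latter requires Lemma \ref{positivity of preforms and covering} once more, hence positivity on the full $H$-preimage, and you are in an infinite regress of ever finer charts.

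The paper breaks this loop by a compactness argument that lets a single application of Lemma \ref{positivity on tropical charts and coverings} suffice. Using properness of $\trop_U$, one may assume $V$ relatively compact and cover it by finitely many charts $(V_i,\varphi_{U_i})$ on which $\beta$ is given by (weakly/strongly) positive $\beta_i$. One then passes to the \emph{common} refinement $U':=U\cap U_1\cap\dots\cap U_s$; the crucial point is that all the pulled-back $\beta_i$ now live on charts with the \emph{same} tropicalization map $\trop_{U'}$, so their positivity glues immediately (locality of $\delta$-preform positivity on a fixed tropical variety) to give a positive $\beta_{U'}\in P^{p,p}(V',\varphi_{U'})$. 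Restricting to $V\cap U'^{\an}$ and applying Lemma \ref{positivity on tropical charts and coverings} once to the dense open immersion $U'\hookrightarrow U$ then gives positivity of $\beta_U$. The finiteness coming from compactness is exactly what your pointwise argument lacks.
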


\begin{proof}
First, we note that if $U'$ is a very affine open subset of $X$ and if $\beta$ is given on tropical charts 
$(V_j',\varphi_{U'})_{j \in J}$ in $W$ by a (weakly/strongly) positive $\beta_j \in P^{p,p}(V_j',\varphi_{U'})$, then 
$\beta$ is given on the tropical chart $(V':= \bigcup_{j\in J} V_j',\varphi_{U'})$ by a unique (weakly/strongly) positive $\beta_{U'} \in P^{p,p}(V',\varphi_{U'})$. 
Existence and uniqueness follow from \cite[Prop. 4.12]{gubler-kuennemann}. Positivity follows 
from the fact that the positivity notions of $\delta$-preforms are defined locally using that we always have the same tropicalization map $\trop_{U'}$. 

Using this property and properness of $\trop_U$, we may assume that $V$ is relatively compact. Therefore $V$ may be covered by finitely many tropical charts $(V_i,\varphi_{U_i})$, $i=1,\dots,s$, such that $\beta$ is given on any $V_i$ by a (weakly/strongly) positive $\beta_i \in P^{p,p}(V_i,\varphi_{U_i})$. Then $U':= U \cap U_1 \cap \dots \cap U_s $ is a very affine open subset of $X$ \cite[4.13]{gubler-forms}. For $V':= U'^\an \cap \bigcup_{i=1}^s V_i$, we get a tropical chart $(V',\varphi_{U'})$ as in the proof of \cite[Prop. 5.13]{gubler-forms}.  Again using the property at the beginning of the proof, we deduce that $\beta$ is given on $(V',\varphi_{U'})$ by the unique (weakly/strongly) positive $\beta_{U'}\in P^{p,p}(V',\varphi_{U'})$ which agrees with $\beta_i$ on $U'^\an \cap V_i$. The restriction of $\beta_{U'}$ to the tropical subchart $(V \cap U'^\an, \varphi_{U'})$ remains (weakly/strongly) positive. Since $U' \subseteq U$, Lemma \ref{positivity on tropical charts and coverings} proves the claim.
\end{proof}

\begin{rem}\label{compare cld}
Chambert-Loir and Ducros have introduced subspaces 
\begin{equation}\label{Afiltration}
A^{p,p}_{\rm s+}(W) \subseteq A^{p,p}_{\rm + }(W) \subseteq A^{p,p}_{\rm w+}(W)
\end{equation}
of strongly-positive, positive, and weakly positive 
smooth forms.
Since they use analytic moment maps, we would like to rephrase their
definition in terms of tropical charts and the language of
\cite{gubler-forms}.
A smooth form $\alpha\in A^{p,p}(W)$ is 
(weakly/strongly) positive if whenever
$(V,\varphi_U)$ is a tropical chart of $X^\an$ such that $V\subseteq W$
and $\alpha|_V$ is induced by $\alpha_U\in A^{p,p}(\Omega)$
for $\Omega=\trop_U(V)$ then $\alpha_U$ is (weakly/strongly) positive,
i.e. the restriction of the superform $\alpha_U$
to any face of $\Trop(U)$ is (weakly/strongly) positive.
This follows from 
\cite[Prop. 7.2]{gubler-guide}. 
Hence we get \eqref{Afiltration}
as the intersection of \eqref{Pfiltration} with $A^{p,p}(W)$.
\end{rem}

\begin{prop}\label{positivity representable products}
On an open subset $W$ of $\Xan$, the following holds:
\begin{itemize} 
 \item[(a)] The product of a strongly positively representable generalized $\delta$-form with a (weakly/strongly) positively representable generalized $\delta$-form is a (weakly/strongly) positively representable generalized $\delta$-form. 
 \item[(b)] The product of a strongly positively representable generalized $\delta$-form with a (weakly/strongly) positive generalized $\delta$-form is a (weakly/strongly) positive generalized $\delta$-form. 
 \item[(c)] The product of a (weakly/strongly) positively representable generalized 
$\delta$-form of type $(p,p)$ with a (strongly/weakly) positive generalized 
$\delta$-form of type $(n-p,n-p)$ is a positive generalized 
$\delta$-form.
\end{itemize}
We can replace positively representable by positive in   (b) and (c) if 
at least one of the two factors is a smooth form. 
\end{prop}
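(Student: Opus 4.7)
All three assertions reduce to local verifications on tropical charts. Given $x\in W$, fix a tropical chart $(V,\varphi_U)$ containing $x$ on which both forms have local representatives $\alpha_U,\beta_U\in P(V,\varphi_U)$. For (a), after shrinking to a common open subset $\tilde\Omega\subseteq N_{U,\R}$ with $\Omega=\tilde\Omega\cap\Trop(U)$ we select positive representatives $\tilde\alpha,\tilde\beta\in P(\tilde\Omega)$ of $\alpha_U,\beta_U$. By Proposition \ref{positivity properties of preforms}(d), the wedge $\tilde\alpha\wedge\tilde\beta$, which represents $\alpha_U\wedge\beta_U$, is (weakly/strongly) positive, hence $\alpha\wedge\beta$ is (weakly/strongly) positively representable.

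For (b), Proposition \ref{local positivity is fine} reduces the problem to showing that $\alpha_U\wedge\beta_U$ is (weakly/strongly) positive in $P(V,\varphi_U)$. Unwinding Definition \ref{positivity on tropical charts}, this amounts to checking, for every morphism $f:X'\to X$ and every compatible tropical chart $(V',\varphi_{U'})$ with associated integral $\R$-affine map $F:N_{U',\R}\to N_{U,\R}$, that $f^*(\alpha_U\wedge\beta_U)|_{\Omega'}$ is a (weakly/strongly) positive $\delta$-preform on the open set $\Omega'\subseteq\Trop(U')$. Fix a strongly positive representative $\tilde\alpha\in P(\tilde\Omega)$ of $\alpha_U$. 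Then $F^*\tilde\alpha\in P(\tilde\Omega')$ represents $f^*\alpha_U$ and is strongly positive by Proposition \ref{positivity properties of preforms}(c). By positivity of $\beta$, $f^*\beta_U|_{\Omega'}$ is a (weakly/strongly) positive $\delta$-preform on $\Omega'$, which by \ref{delta-preformen}(iii) is the same as a (weakly/strongly) positive supercurrent in $D(\tilde\Omega')$. Applying Proposition \ref{positivity properties of preforms}(d) on $\tilde\Omega'\subseteq N_{U',\R}$ to the product $F^*\tilde\alpha\wedge f^*\beta_U|_{\Omega'}$, which equals $f^*(\alpha_U\wedge\beta_U)|_{\Omega'}$, yields the (weakly/strongly) positivity. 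For (c), the same argument (using the symmetry of Proposition \ref{positivity properties of preforms}(d) to also cover a weakly positively representable $\alpha$ wedged with a strongly positive $\beta$) produces a weakly positive product; since the product has bidegree $(n,n)$ with $n=\dim X$, the equality of all three positivity notions at this top bidegree, asserted in Definition \ref{positivity on tropical charts} for $P(V,\varphi_U)$ and inherited by $P(W)$ via Definition \ref{positive delta-forms on W}, yields the claimed positivity.

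For the final clause, if one factor is a smooth form, a (weakly/strongly) positive extension to $\tilde\Omega$ need not exist, so we argue face-by-face. By Remark \ref{compare cld}, a smooth (weakly/strongly) positive form restricts to a (weakly/strongly) positive superform on each face of the ambient tropical complex, a property preserved by pull-back via the affine map $F$ using Proposition \ref{positivity properties of superforms}(c). The other factor admits a polyhedral supercurrent representation (Example \ref{polyhedral supercurrent}) whose face coefficients are (weakly/strongly) positive. Wedging with the smooth form multiplies face by face, and Proposition \ref{positivity properties of superforms}(d) applied on each affine face yields the asserted positivity.

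The main obstacle is the bookkeeping across the several positivity notions (weakly positive, positive, strongly positive, and the ``representable'' variants) on the distinct ambient spaces $P(\tilde\Omega)$, $P(V,\varphi_U)$, $P(\Omega)$, $P(W)$, together with carefully tracking how positivity of $\beta$ in $P(W)$ transfers to supercurrent positivity in $D(\tilde\Omega')$ for every pull-back chart pair; once this translation is in place, all assertions fall out of Propositions \ref{positivity properties of superforms} and \ref{positivity properties of preforms}.
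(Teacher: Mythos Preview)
Your argument for (a) matches the paper's, and your argument for (b) is correct and in fact slightly more economical than the paper's: rather than unpacking the explicit polyhedral decomposition via formula (2.12.3) of \cite{gubler-kuennemann} and applying Proposition~\ref{positivity properties of superforms} face by face, you observe that $f^*\beta_U|_{\Omega'}=F^*\tilde\beta\wedge\delta_{\Trop(U')}$ is itself a $\delta$-preform on $\tilde\Omega'$ whose positivity as such coincides (by \ref{delta-preformen euklidisch} and \ref{delta-preformen}(iii)) with its positivity on $\Omega'$, so that Proposition~\ref{positivity properties of preforms}(d) applies directly on $\tilde\Omega'$. This is a clean reduction.

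However, your treatment of (c) has a gap in the middle reading of the three-fold convention: when $\alpha$ is merely \emph{positively} representable and $\beta$ is merely \emph{positive} (neither strongly so), Proposition~\ref{positivity properties of preforms}(d) does not apply, since that statement requires one factor to be strongly positive. Your appeal to ``symmetry of Proposition~\ref{positivity properties of preforms}(d)'' only covers the outer two readings (weakly/strongly and strongly/weakly). The paper closes this gap by writing out the polyhedral decomposition of $\alpha\wedge\beta|_\Omega$ explicitly and then invoking Proposition~\ref{positivity properties of superforms}(e), which does assert that the wedge of a positive $(p,p)$-superform with a positive $(r-p,r-p)$-superform is a positive top-degree form. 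The same remark applies to your final paragraph on smooth forms: for (c) you need Proposition~\ref{positivity properties of superforms}(e), not (d), to handle the positive-times-positive case. The fix is localized and easy---either cite (e) where you currently cite (d), or unpack one layer of the polyhedral representation as the paper does---but as written the middle case of (c) is not established.
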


\begin{proof} 
Let $(V,\varphi_U)$ be a tropical chart in $W$. 
It is enough to show the properties (a) and (b) for $\alpha \wedge \beta$ with $\alpha \in P^{p,p,l}(V,\varphi_U)$ and $\beta \in P^{p',p',l'}(V,\varphi_U)$.  Then (a) follows from Proposition \ref{positivity properties of preforms}(d). 

For (b), we choose $\delta$-preforms $\tilde\alpha, \tilde\beta$ on an open subset $\tilde\Omega$ of $N_{U,\R}$ which represent $\alpha,\beta$, where $\Omega=\trop_U(V)= \tilde\Omega \cap \Trop(U)$. Since the positivity notions are functorial, it is enough to show that 
$\alpha \wedge \beta |_{\Omega}$ is a (weakly/strongly) positive $\delta$-preform.
Let $\KC$ be a polyhedral complex of definition for $\tilde\alpha$  which means  
$$\tilde\alpha = \sum_{\Delta \in \KC^{l}} \tilde\alpha_\Delta \wedge \delta_\Delta$$
for  $\tilde\alpha_\Delta \in A^{p,p}(\tilde\Omega \cap \Delta)$. Since $\alpha$ is strongly positively representable, the superform $\tilde\alpha_\Delta$ is  strongly  positive on $\tilde\Omega \cap \Delta$. We may also assume that $\Ccal$ is a polyhedral complex of definition for $\tilde\beta$ and that  $\Trop(U)$ is a subcomplex of $\Ccal$. Note that the $\delta$-preform $\beta|_\Omega = \tilde\beta \wedge \delta_{\Trop(U)}$ has the polyhedral decomposition
$$\beta|_\Omega = \sum_{\Delta' \in \KC^{l'}} \beta_{\Delta'} \wedge \delta_{\Delta'} \in D(\Omega)$$
for (weakly/strongly) positive  $\beta_\Delta \in A^{p',p'}(\Omega \cap \Delta)$.   By the formula (2.12.3) in 
\cite{gubler-kuennemann} for the polyhedral representation of the product of $\delta$-preforms and using $\alpha \wedge \beta|_\Omega = \tilde\alpha \wedge (\tilde\beta \wedge \delta_{\Trop(U)})$, we have
$$\alpha \wedge \beta|_\Omega = \sum_{\tau \in \KC^{l+l'}} \sum_{\Delta,\Delta'} [N:N_\Delta + N_{\Delta'}]  \cdot \tilde\alpha_\Delta \wedge \beta_{\Delta'} \wedge \delta_{\tau} \in D(\Omega),$$
where $\Delta,\Delta'$ range over all pairs in $\KC^l \times \KC^{l'}$ with $\tau = \Delta \cap \Delta'$ and with $\Delta \cap (\Delta' + \ve v) \neq \emptyset$ for a fixed generic vector $v \in N_{U,\R}$ and for $\ve>0$ sufficiently small.  
  Proposition \ref{positivity properties of superforms} shows now that $\alpha \wedge \beta|_\Omega$ is a (weakly/strongly) positive $\delta$-preform. This proves (b).  
  
We can prove (c) in the same way as (b) if we observe 
Proposition \ref{positivity properties of superforms} (e). If $\alpha$ is a smooth form given on  $(V,\varphi_U)$ by a superform $\alpha_U$ on $\Omega$, then 
$$\alpha \wedge \beta|_\Omega = \sum_{\Delta' \in \KC} \alpha_U \wedge \beta_{\Delta'} \wedge \delta_{\Delta'} \in D(\Omega)$$
and the last claim follows again from Proposition \ref{positivity properties of superforms}. 
\end{proof}

\begin{definition} \label{positive delta-currents on W}
A  $\delta$-current $T \in E^{p,p}(W)$ is called {\it (weakly/strongly) positive} if $T$ is symmetric and if 
$\langle T, \beta \rangle \geq 0$
for all (strongly/weakly) positive $\delta$-forms $\beta \in B_c^{n-p,n-p}(W)$. 
\end{definition}

Recall from \cite[Prop. 6.6]{gubler-kuennemann} that we have a natural map $P^{p,q}(W) \to E^{p,q}(W)$, $\alpha \mapsto [\alpha]_E$ determined by
$
\langle[\alpha]_E,\beta\rangle=\int_W\alpha\wedge\beta
$ 
for all $\beta\in B_c^{n-p,n-q}(W)$, inducing  the map $A^{p,q}(W) \to D^{p,q}(W), \, \alpha \mapsto [\alpha]_D$.

\begin{cor} \label{delta forms vs delta currents}
Let $\alpha \in P^{p,p}(W)$ be a (weakly/strongly) positively representable
generalized $\delta$-form.  
Then $[\alpha]_E$ is a (weakly/strongly) positive $\delta$-current.
\end{cor}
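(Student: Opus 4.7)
The plan is to verify the two conditions in Definition \ref{positive delta-currents on W} for the $\delta$-current $[\alpha]_E$: symmetry, and the required non-negativity of the pairing with every (strongly/weakly) positive $\beta \in B_c^{n-p,n-p}(W)$. Both conditions reduce to the local analysis on tropical charts set up in the preceding results.

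For the pairing I would use the defining formula
\[\langle [\alpha]_E, \beta\rangle = \int_W \alpha\wedge \beta\]
and apply Proposition \ref{positivity representable products}(c) to the (weakly/strongly) positively representable generalized $\delta$-form $\alpha$ of type $(p,p)$ and to the (strongly/weakly) positive $\delta$-form $\beta$ of complementary type $(n-p,n-p)$. This immediately yields a positive generalized $\delta$-form $\alpha\wedge\beta$ of top bidegree $(n,n)$ with compact support in $W$, so it suffices to prove that the integral of a compactly supported positive generalized $\delta$-form of top degree is non-negative. Choosing a partition of unity subordinate to a cover of $\supp(\beta)$ by tropical charts $(V,\varphi_U)$ with $V\subseteq W$, the integral decomposes into a finite sum of local contributions. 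By Proposition \ref{local positivity is fine}, the local representative $\gamma_U \in P^{n,n}(V,\varphi_U)$ of $\alpha\wedge\beta$ on each such chart is again positive, and Remark \ref{positive delta-forms vs supercurrents} ensures that in top bidegree $(n,n)$ the weak, ordinary, and strong positivity notions all coincide. Hence $\gamma_U$ is represented by a positive $\delta$-preform whose polyhedral decomposition along the maximal faces of $\Trop(U)$ has non-negative coefficient superforms, so each local contribution to the integral is non-negative.

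For symmetry of $[\alpha]_E$, I would argue that positivity of a $\delta$-preform already includes symmetry (compare \ref{positive superforms}(vi) and \ref{delta-preformen}(iii)), so $\alpha$ itself is symmetric on every tropical chart through its positive representatives. For an anti-symmetric $\beta \in B_c^{n-p,n-p}(W)$, the wedge product $\alpha\wedge\beta$ is then anti-symmetric of bidegree $(n,n)$ on every chart, and the integral of an anti-symmetric top-degree supercurrent vanishes on each maximal polyhedron. Hence $\langle [\alpha]_E, \beta\rangle = 0$, which is precisely the symmetry of $[\alpha]_E$.

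The main obstacle is controlling the local integration for positive top-degree generalized $\delta$-forms on tropical charts; the essential ingredients are Proposition \ref{local positivity is fine}, which transports positivity from $W$ down to a chart representative, and the top-degree coincidence of the three positivity notions in Remark \ref{positive delta-forms vs supercurrents}, which is what makes the polyhedral-decomposition argument go through.
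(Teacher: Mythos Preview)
Your proposal is correct and follows essentially the same route as the paper: invoke Proposition \ref{positivity representable products}(c) to see that $\alpha\wedge\beta$ is a positive generalized $\delta$-form of type $(n,n)$, and conclude that its integral is non-negative. The paper's proof is the single line ``This follows from Proposition \ref{positivity representable products}(c)'', leaving both the symmetry of $[\alpha]_E$ and the non-negativity of the integral of a positive top-degree form as implicit; you have simply unpacked these two points, and your arguments for them (via Proposition \ref{local positivity is fine} and the polyhedral decomposition in top degree, and via the observation that an anti-symmetric $(n,n)$-form must vanish) are sound.
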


\proof
We have to show $\int_W\alpha\wedge\beta\geq 0$
for every (strongly/weakly) positive $\beta\in B_c^{n-p,n-p}(W)$.
This follows from Proposition 
\ref{positivity representable products}(c).
\qed

\begin{rem} \label{positive currents on W}
A symmetric current $T \in D^{p,p}(W)$ is called {\it (weakly/strongly) 
positive} if {$\langle T,\alpha\rangle \geq 0$} for 
all (strongly/weakly) 
positive smooth forms $\alpha \in A^{n-p,n-p}(W)$ with compact support (see 
\cite[\S 5.3]{chambert-loir-ducros}). 
Since every smooth form is a $\delta$-form, Remark \ref{compare cld} shows that any 
(weakly/strongly) positive $\delta$-current induces a (weakly/strongly) 
positive current.  
\end{rem}

\begin{prop} \label{functorial current positive and positive delta-forms}
Let $\beta \in P^{p,p}(W)$. Then $\beta$ is a (weakly/strongly) positive generalized $\delta$-form if and only if  for every morphism $f:X' \rightarrow X$ of varieties the current $[f^*(\beta)]_D \in D^{p,p}(f^{-1}(W))$ is (weakly/strongly) positive.
\end{prop}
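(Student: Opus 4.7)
The plan is to prove both directions using the local characterization of positivity via tropical charts (Definition~\ref{positive delta-forms on W}) together with the equivalence between $\delta$-preform positivity and supercurrent positivity recorded in Remark~\ref{positive delta-forms vs supercurrents}. The bridge between a pairing on a tropical cycle $\Omega = \trop_U(V)$ and the associated current pairing on $V$ is the integration formula for generalized $\delta$-forms on tropical charts from \cite[\S 6]{gubler-kuennemann}.

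For the forward direction, I use that positivity of a generalized $\delta$-form is functorial under pull-back (Definition~\ref{positive delta-forms on W}), so $f^*(\beta)$ is (weakly/strongly) positive for every morphism $f:X'\to X$, and it suffices to show that the associated current $[\gamma]_D$ is (weakly/strongly) positive whenever $\gamma$ is a (weakly/strongly) positive generalized $\delta$-form on an open subset $Y$ of an analytification. Given a compactly supported (strongly/weakly) positive smooth test form $\alpha$ of complementary degree, I would cover $\supp(\alpha)$ by tropical charts $(V,\varphi_U)$ on which $\gamma$ is represented by a (weakly/strongly) positive $\gamma_U \in P^{p,p}(V,\varphi_U)$ (such a cover exists by Definition~\ref{positive delta-forms on W}), pick a subordinate smooth partition of unity and reduce to a single chart. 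There the smooth form $\alpha$ is given by a (strongly/weakly) positive superform $\alpha_U$ on $\Omega = \trop_U(V)$, and the integrand $\gamma_U|_{\Omega} \wedge \alpha_U$ is a positive top-degree $\delta$-preform on $\Omega$ by the pairing criterion of Proposition~\ref{positivity properties of superforms}(e) combined with Proposition~\ref{positivity properties of preforms}(d); its integral over $\Omega$ is non-negative.

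For the backward direction, assume $[f^*(\beta)]_D$ is (weakly/strongly) positive for every morphism $f:X'\to X$. I verify Definition~\ref{positive delta-forms on W} at a fixed point $x\in W$ by taking any tropical chart $(V,\varphi_U)$ with $x\in V\subseteq W$ and letting $\beta_U \in P^{p,p}(V,\varphi_U)$ be the unique element with $\beta|_V = \trop_U^*(\beta_U)$ guaranteed by \cite[Prop. 4.18]{gubler-kuennemann}; it remains to show $\beta_U$ is (weakly/strongly) positive in the sense of Definition~\ref{positivity on tropical charts}. Fixing a second morphism $g:X''\to X$ with compatible chart $(V'',\varphi_{U''})$, Remark~\ref{positive delta-forms vs supercurrents} reduces this to showing that $g^*(\beta_U)|_{\Omega''}$ pairs non-negatively with every compactly supported (strongly/weakly) positive superform $\eta$ on $\Omega''$. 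The pull-back $\trop_{U''}^*(\eta)$ is a compactly supported (strongly/weakly) positive smooth form on $V''\subseteq g^{-1}(W)$ (compactness from properness of $\trop_{U''}|_{V''}$, positivity from stability under pull-back of superforms), and the identity $g^*(\beta)|_{V''} = \trop_{U''}^*(g^*(\beta_U))$ together with the integration formula yields
\[
\langle g^*(\beta_U)|_{\Omega''},\eta\rangle \;=\; \langle [g^*(\beta)]_D,\trop_{U''}^*(\eta)\rangle \;\geq\; 0
\]
by hypothesis.

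The main obstacle will be to pin down the integration identity used in the last display, since $g^*(\beta_U)$ is only a class in $P^{p,p}(V'',\varphi_{U''})$ and $\trop_{U''}$ has positive-dimensional fibres. This identification is essentially built into the construction of the current $[\cdot]_D$ through tropical charts, but requires a careful check that the pairing descends to the correct class modulo the subspace $N^{p,p}(V'',\varphi_{U''})$ of \cite[\S 4]{gubler-kuennemann}; once granted, the remainder of the argument is routine bookkeeping with pull-backs and the $\delta$-preform/supercurrent positivity equivalence.
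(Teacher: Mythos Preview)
Your proposal is correct and follows essentially the same route as the paper. Both directions hinge on the integration identity
\[
\langle [\beta]_D,\alpha\rangle \;=\; \int_{|\Trop(U)|} \beta_U\wedge\alpha_U
\]
(this is \cite[Def.~5.8]{gubler-kuennemann}, which the paper displays explicitly and then unpacks via the polyhedral decomposition $\beta_U|_\Omega=\sum_\Delta \beta_\Delta\wedge\delta_\Delta$); once this is in place, the forward direction reduces to Proposition~\ref{positivity properties of superforms}(e) applied facewise, and the backward direction is exactly your pairing argument. The only notable difference is that in the forward direction the paper avoids the partition-of-unity step by invoking \cite[Prop.~5.7]{gubler-kuennemann} to find a single very affine $U$ whose chart already contains $\supp(\alpha\wedge\beta)$; your partition-of-unity reduction works equally well since multiplication by a non-negative smooth function preserves all three positivity notions for superforms. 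One small caution: Proposition~\ref{positivity properties of preforms}(d) as stated applies only to open subsets of $N_\R$, not of a tropical cycle, so in the forward direction you should argue directly via the polyhedral decomposition and Proposition~\ref{positivity properties of superforms}(e) rather than citing~(d).
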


\begin{proof}
We may assume that $\beta$ has codimension $l$ and let $\alpha \in A_c^{n-p,n-p}(W)$. 
By  \cite[Prop. 5.7]{gubler-kuennemann}, there is a very affine open subset $U \subseteq X$  and an open subset $\Omega$ of $\Trop(U)$ such that $V := \trop_U^{-1}(\Omega)$ contains the support of $\alpha \wedge \beta \in P^{n,n}(W)$ and such that $\alpha$ (resp. $\beta$) is given on $V$ by $\alpha_U \in A^{n-p,n-p}(\Omega)$ (resp. $\beta_U \in P^{p,p}(V,\varphi_U)$).
Then $\alpha_U \wedge \beta_U$ has compact support in $\Omega$ \cite[Prop. 4.21]{gubler-kuennemann}.  
Since $\beta_U$ has codimension $l$, there is a polyhedral complex $\KC$ of definition for $\beta_U|_\Omega$ with polyhedral representation
$$\beta_U|_\Omega = \sum_{\Delta \in \KC^{l}} \beta_\Delta \wedge \delta_\Delta$$
for superforms $\beta_\Delta \in A^{p-l,p-l}(\Omega \cap \Delta)$. By \cite[Def. 5.8]{gubler-kuennemann},  we have
\begin{equation} \label{eqi}
\langle [\beta]_D, \alpha \rangle = \int_W \beta \wedge \alpha = \int_{|\Trop(U)|} \beta_U \wedge \alpha_U = \sum_{\Delta \in \KC^l} \int_\Delta \alpha_\Delta \wedge \beta_\Delta .
\end{equation}
Suppose that $\beta$ is a (weakly/strongly) positive generalized $\delta$-form and that  $\alpha \in A_c^{n-p,n-p}(W)$ is a (strongly/weakly) positive smooth form. It follows from Proposition \ref{positivity representable products} that $\beta \wedge \alpha$ is a positive generalized $\delta$-form of type $(n,n)$. By Proposition \ref{local positivity is fine}, the superform  
$\alpha_\Delta \wedge \beta_\Delta$  of type $(n-l,n-l)$ is positive on $\Delta \cap \Omega$.  We conclude that the integral in \eqref{eqi} is non-negative and hence $[\beta]_D$ is a positive current on $W$. If $f:X' \to X$ is a morphism of varieties, then $f^*(\beta)$ is a (weakly/strongly) positive generalized $\delta$-form on $f^{-1}(W)$ and hence $[f^*(\beta)]_D$ is a positive current on $f^{-1}(W)$.

Conversely, assume that $[f^*(\beta)]_D$ is a positive current on $f^{-1}(W)$ for all morphisms $f:X' \to X$. Using this functoriality, it is enough to show that $\beta_U|_\Omega$ is a (weakly/strongly) positive $\delta$-preform on $\Omega := \trop_U(V)$ for a tropical chart $(V,\varphi_U)$ where $\beta$ is given by $\beta_U \in P^{p,p}(V,\varphi_U)$. By assumption, the integral in \eqref{eqi} is  non-negative for $\alpha$ given by a  (strongly/weakly) positive superform $\alpha_U \in A_c^{n-p,n-p}(\Omega)$. It follows from Proposition \ref{positivity properties of superforms}(e)  
that $\beta_\Delta$ is (weakly/strongly) positive in $A^{n-l,n-l}(\Omega \cap \Delta)$ for every $\Delta \in \KC^l$. This proves that $\beta_U|_\Omega$  is a (weakly/strongly) positive $\delta$-preform. 
\end{proof}

\begin{rem} \label{positivity for smooth forms and its current}
The same argument shows for a smooth form $\beta \in A^{p,p}(W)$ that $\beta$ is (weakly/strongly) positive if and only if the associated current $[\beta]_D$ is (weakly/strongly) positive on $W$,  and furthermore this is equivalent that the $\delta$-current $[\beta]_E$ is (weakly/strongly) positive on $W$. 
\end{rem}

\begin{lem} \label{positivity of delta-forms and coverings}
Let $f:X' \rightarrow X$ be a generically finite dominant morphism of varieties over $K$ and let $W$ be open in $\Xan$. 
Then $\beta \in P^{p,p}(W)$ 
is (weakly/strongly) positive 
if and only if $f^*(\beta)$ is (weakly/strongly) positive on $f^{-1}(W)$.
\end{lem}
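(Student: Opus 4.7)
The forward implication is immediate: all three positivity notions on $P^{p,p}(\cdot)$ are preserved by pull-back along any morphism of varieties, as recorded right after Definition \ref{positive delta-forms on W}. For the converse, my plan is to test positivity of $\beta$ pointwise via Definition \ref{positive delta-forms on W}, reduce the problem on $X$ to a problem on a single tropical chart, and then invoke the tropical covering Lemma \ref{positivity on tropical charts and coverings} to transfer positivity across $f$.

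Concretely, I would fix $x \in W$. Since tropical charts form a basis of $\Xan$ \cite[Prop. 4.16]{gubler-forms} and $\beta$ is a generalized $\delta$-form, I choose a tropical chart $(V,\varphi_U)$ with $x \in V \subseteq W$ such that $\beta|_V$ is induced by some $\beta_U \in P^{p,p}(V,\varphi_U)$. By Definition \ref{positive delta-forms on W} it then suffices to show that $\beta_U$ is (weakly/strongly) positive. Now pick a very affine open subset $U' \subseteq f^{-1}(U)$ of $X'$, available because $f$ is dominant and very affine opens form a basis \cite[4.13]{gubler-forms}. The restriction $f|_{U'}: U' \to U$ is again dominant and generically finite, and setting $V' := f^{-1}(V) \cap U'^\an$, Lemma \ref{positivity on tropical charts and coverings} provides a tropical chart $(V',\varphi_{U'})$ on $X'^\an$; the Sturmfels--Tevelev formula \eqref{Sturmfels-Tevelev} ensures that the induced affine map on tropicalizations surjects onto $\Trop(U)$, so $V'$ is non-empty whenever $V$ is.

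By functoriality of the construction of representatives on tropical charts recalled in \ref{building block of delta-preform}, the restriction $f^*(\beta)|_{V'}$ is induced by $f^*(\beta_U) \in P^{p,p}(V',\varphi_{U'})$. Since $f^*(\beta)$ is assumed (weakly/strongly) positive on $f^{-1}(W) \supseteq V'$, Proposition \ref{local positivity is fine} yields that $f^*(\beta_U)$ is (weakly/strongly) positive. Applying Lemma \ref{positivity on tropical charts and coverings} to the generically finite dominant morphism $f|_{U'}$ then gives that $\beta_U$ itself is (weakly/strongly) positive; hence $\beta$ is (weakly/strongly) positive at $x$, and since $x \in W$ was arbitrary, on all of $W$. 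The main obstacle is the bookkeeping in the middle step — choosing $U'$ so that $(V',\varphi_{U'})$ is a legitimate tropical chart compatible with $(V,\varphi_U)$ and verifying that $f^*(\beta_U)$ truly represents $f^*(\beta)|_{V'}$; once these functorialities are in place, the lemma collapses to the tropical statement already contained in Lemma \ref{positivity on tropical charts and coverings}.
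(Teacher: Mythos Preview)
Your proof is correct and follows the same route as the paper, which simply records that the lemma follows from Lemma~\ref{positivity on tropical charts and coverings}. You have spelled out the details the paper leaves implicit---choosing a tropical chart, passing to a very affine $U'\subseteq f^{-1}(U)$, and invoking Proposition~\ref{local positivity is fine} to get chart-level positivity of $f^*(\beta_U)$ before applying Lemma~\ref{positivity on tropical charts and coverings}---but the underlying argument is identical.
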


\proof 
This follows from Lemma \ref{positivity on tropical charts and coverings}. 
\qed

\begin{lem}\label{positive and negative generalized delta-form}
Let $W$ be an open subset of $X^\an$ and let  
$\alpha\in P^{p,p}(W)$  such
that $\alpha$ and $-\alpha$ are weakly positive.
Then $\alpha=0$.
\end{lem}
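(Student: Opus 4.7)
The plan is to reduce the global statement on $W$ to the already established local statement on tropical charts, namely Remark \ref{posnegtropchart}. Since generalized $\delta$-forms on $W$ are by construction encoded by their restrictions to tropical charts contained in $W$, and since such charts form a basis of topology of $\Xan$ by \cite[Prop. 4.16]{gubler-forms}, it suffices to verify that $\alpha$ vanishes on each such chart.

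Concretely, I would pick an arbitrary tropical chart $(V,\varphi_U)$ with $V\subseteq W$ and let $\alpha_U\in P^{p,p}(V,\varphi_U)$ be the element representing $\alpha|_V$. The assumption that both $\alpha$ and $-\alpha$ are weakly positive on $W$ together with Proposition \ref{local positivity is fine} yields that both $\alpha_U$ and $-\alpha_U$ are weakly positive elements of $P^{p,p}(V,\varphi_U)$. Remark \ref{posnegtropchart} then gives $\alpha_U=0$, hence $\alpha|_V=0$.

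Since the tropical charts $(V,\varphi_U)$ with $V\subseteq W$ cover $W$, and since a generalized $\delta$-form is determined by its restrictions to such charts (uniqueness in \cite[Prop. 4.18]{gubler-kuennemann}), this forces $\alpha=0$ on $W$.

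I do not expect any real obstacle here: the substantive content—the fact that a weakly positive superform whose negative is also weakly positive must vanish—has already been worked out in Remark \ref{posnegtrop} via the decomposition of symmetric superforms as differences of strongly positive ones \cite[Lemme 5.2.3]{chambert-loir-ducros} combined with the duality in Proposition \ref{positivity properties of superforms}(e), and lifted to tropical charts in Remark \ref{posnegtropchart}. The only thing left is the routine sheaf-theoretic globalisation sketched above.
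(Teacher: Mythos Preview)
Your proposal is correct and follows essentially the same approach as the paper's proof: pass to a tropical chart $(V,\varphi_U)$ with $V\subseteq W$, use Proposition \ref{local positivity is fine} to deduce that $\alpha_U$ and $-\alpha_U$ are weakly positive in $P^{p,p}(V,\varphi_U)$, invoke Remark \ref{posnegtropchart} to conclude $\alpha_U=0$, and then globalise. The paper's argument is slightly terser but uses exactly the same ingredients in the same order.
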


\proof
We choose a tropical chart $(V,\varphi_U)$ of $X^\an$ with
$V\subseteq W$ such that $\alpha$ is induced by $\alpha_U$
in $P^{p,p}(V,\varphi_U)$.
We get from Proposition \ref{local positivity is fine} 
that $\alpha_U$ and $-\alpha_U$ are positive in the sense of 
\ref{positivity on tropical charts}.
We conclude from Remark \ref{posnegtropchart} that $\alpha_U=0$.
It follows that $\alpha$ vanishes as well.  
\qed

\section{Plurisubharmonic functions and  metrics} \label{pluri}

Let $X$ be an $n$-dimensional variety over $K$.
We recall first some definitions from \cite{chambert-loir-ducros}. 

\begin{definition} \label{psh functions}
A continuous real function $f$ on an open subset $W$ of $\Xan$ is called 
{\it plurisubharmonic} or \emph{psh} if $d'd''[f]_D$ is a positive current 
in $D^{1,1}(W)$. 
This means that $d'd''[f]_D$ has to be non-negative on positive forms in $A_c^{n-1,n-1}(W)$. 
\end{definition}

\begin{rem} \label{smooth psh}
There is an elementary way to describe 
when a smooth function $f$ is psh.  
Locally, there is a tropical chart $(V,\varphi_U)$ and a smooth function 
$\phi$ on $\Omega = \trop_U(V)$ with $f= \phi \circ \trop$. 
Then it follows from \cite[Lemma 5.5.3]{chambert-loir-ducros} that $f$ is psh 
on $V$ if and only if the restriction of $\phi$ to $\Delta$ is convex for 
any polyhedron $\Delta \subseteq \Omega$. 
\end{rem}

Next we show that Remark \ref{smooth psh} doesn't hold
without the smoothness assumption on $f$ (see \S \ref{Piecewise smooth metrics} for a discussion of piecewise smooth functions).

\begin{ex} \label{counterexample for convexity}
Let us consider the line $x_1+x_2=1$ in $\G_m^2$. 
Then $\Trop(X)$ is the union of the three half lines 
$\{u \in \R_+^2 \mid u_1=0\}$, $\{u \in \R_+^2 \mid u_2=0\}$ and $\{u \in \R_-^2 \mid u_1=u_2\}$, all equipped with weight $1$. 
Let $\phi$ be the conic function determined by $\phi(1,0)=a$, $\phi(0,1)=b$ and 
$\phi(-1,-1)=c$. As above, we set $f:=\phi \circ \trop$. 
Then the restriction of $\phi$ to any polyhedron $\Delta \subseteq \Trop(X)$ 
is linear and hence convex. 
However, if $\eta$ is a non-negative compactly supported smooth function 
on $\Trop(X)$ with $\eta(0,0)=1$, then we have 
\[
\langle d'd''[f]_E, \trop^*(\eta) \rangle = \langle  [\phi], d'd''\eta \rangle = (a+b+c) \eta(0,0)= a+b+c
\]
which gives a counterexample to $[f]$ psh if and only if $a+b+c <0$.
\end{ex}

\begin{art} \label{functorial psh}
We give some variants of defining psh functions. 
We always consider a continuous function $f$ on an open subset $W$ of $\Xan$. 
\begin{itemize}
\item[(a)] 
$f$ is {\it $\delta$-psh} if $d'd''[f]_E$ is a positive 
$\delta$-current in $E^{1,1}(W)$ as defined in \ref{positive delta-currents on W} using the $\delta$-current $[f]_E$ from \cite[Prop. 6.16]{gubler-kuennemann}. 
\item[(b)] $f$ is {\it functorial psh} if $f \circ \varphi$ is psh on 
$\varphi^{-1}(W)$ for all morphisms $\varphi:X' \rightarrow X$.  
\item[(c)] $f$ is {\it functorial $\delta$-psh} if $f \circ \varphi$ is 
$\delta$-psh on $\varphi^{-1}(W)$ for all morphisms $\varphi:X' \rightarrow X$.
\end{itemize}
Clearly (c) yield (a) and (b).
either (a) or (b) yield that the function $f$ is psh.
\end{art}

\begin{art} \label{psh-metrics}
In the following, $L$ is a line bundle on $X$. 
Let $\metr$ be a continuous metric on $L^\an$ over an open subset $W$
of $X^\an$.
Following \cite[6.3.1]{chambert-loir-ducros} the metric $\metr$ is called 
{\it psh} if $-\log \| s \|$ is a psh-function on 
$\Uan\cap W$ for any frame $s$ of $L$ over any open subset $U$. 
 {Recall that a frame of a line bundle over $U$ is
{by definition} a nowhere vanishing section over $U$.} 
Note that this is equivalent to say that $[c_1(L,\metr)]_D$ is a 
positive current on $W$. 
Similarly as above we say that
\begin{itemize}
\item[(a)] 
$\metr$ is {\it $\delta$-psh} if $-\log \| s \|$ is a 
$\delta$-psh-function on $\Uan\cap W$ for any frame $s$ of $L$ over 
any open subset $U$ of $X$. 
\item[(b)] 
$\metr$ is {\it functorial psh} if $\varphi^* \metr$ is psh on 
$(\varphi^\an)^{-1}(W)$ for all morphisms $\varphi:X' \rightarrow X$.  
\item[(c)] 
$\metr$ is {\it functorial $\delta$-psh} if 
$\varphi^*\metr$ is $\delta$-psh on $(\varphi^\an)^{-1}(W)$ for all morphisms 
$\varphi:X' \rightarrow X$.
\end{itemize}
Clearly (c) implies (a) and (b).
Furthermore either (a) or (b) implies that the metric $\metr$ is psh. Note also that $\metr$ is $\delta$-psh if and only if 
the first Chern $\delta$-current $[c_1(L,\metr)]_E$ (defined in \cite[7.7]{gubler-kuennemann}) is a 
positive $\delta$-current on $W$. 
\end{art}

\begin{prop} \label{push-forward of function}
Let $\varphi:X' \to X$ be a surjective proper morphism of $n$-dimensional varieties over $K$ and let $f$ be a continuous function on an open subset $W$ of $\Xan$. Then we have the projection formula 
$$\varphi_*[\varphi^*f]_E = \deg(\varphi) [f]_E,$$
where $[\varphi^*f]_E$ is the $\delta$-current on $\varphi^{-1}(W)$ induced by $f \circ \varphi$.  
\end{prop}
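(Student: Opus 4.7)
The plan is to verify the identity of $\delta$-currents on $W$ by pairing both sides with an arbitrary test form $\beta\in B_c^{n,n}(W)$. Since $\varphi$ is proper, the push-forward $\varphi_*[\varphi^*f]_E$ is well-defined via duality with the pull-back of test forms. Unfolding the definitions of $\varphi_*$ on $\delta$-currents and of $[g]_E$ for continuous $g$, the identity becomes equivalent to the Radon-integral formula
\begin{equation*}
\int_{\varphi^{-1}(W)}(f\circ\varphi)\cdot\varphi^*\beta \;=\; \deg(\varphi)\int_W f\cdot\beta,
\end{equation*}
whose two sides are finite because $\varphi^{-1}(\supp\beta)$ is compact.

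To establish this formula I would localize: via a partition of unity on $\supp\beta$ reduce to the case where $\beta$ is supported in a tropical chart $(V,\varphi_U)$ with $V\subseteq W$, and cover the compact preimage $\varphi^{-1}(\supp\beta)$ by finitely many tropical charts $(V'_j,\varphi_{U'_j})$ on $X'^\an$ where each $U'_j$ is a very affine open subset of $X'$ satisfying $\varphi(U'_j)\subseteq U$; pick a partition of unity subordinate to $\{V'_j\}$. On each such chart the pull-back $\varphi^*\beta$ is represented by $F_j^*(\tilde\beta_U)\wedge\delta_{\Trop(U'_j)}$ for the canonical integral $\R$-affine map $F_j\colon N_{U'_j,\R}\to N_{U,\R}$ induced by $\varphi$.

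Combining the tropical description of integration of top-degree $\delta$-forms \cite[Def.~5.8]{gubler-kuennemann}, the tropical projection formula \cite[Prop.~2.14]{gubler-kuennemann}, and the Sturmfels--Tevelev multiplicity formula \eqref{Sturmfels-Tevelev}, each chart-wise contribution becomes a tropical integral over $\Trop(U)$ carrying the extra multiplicity $\deg(\varphi|_{U'_j})$. Summing over $j$ and using that the subordinate partition of unity restricted to the generic (finite) fiber of $\varphi$ over a point sums pointwise to $1$, one obtains the identity when $f$ is smooth. Equivalently, the argument yields the equality of signed Radon measures $\varphi_*\mu_{\varphi^*\beta}=\deg(\varphi)\mu_\beta$ on $W$, where $\mu_\beta$ is the Radon measure associated to $\beta$; the general case of continuous $f$ then follows from the standard measure-theoretic projection formula $\int g\,d(\varphi_*\mu)=\int (g\circ\varphi)\,d\mu$, bypassing any smooth approximation of $f$.

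The principal obstacle is the degree bookkeeping in the covering argument: one must verify that the subordinate partition of unity on $\varphi^{-1}(\supp\beta)$, when evaluated on a generic fiber of $\varphi$, correctly assembles the local degrees $\deg(\varphi|_{U'_j})$ into the global degree $\deg(\varphi)$. This combines the elementary count $\deg(\varphi)=\#\varphi^{-1}(x)$ for generic $x\in X$ with the Sturmfels--Tevelev formula to convert the local tropical integrals over $\Trop(U'_j)$ into a single tropical integral over $\Trop(U)$ with the correct total multiplicity.
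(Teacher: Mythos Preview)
Your overall structure matches the paper's: pair with $\beta\in B_c^{n,n}(W)$, reduce to the Radon-measure identity $\varphi_*\mu_{\varphi^*\beta}=\deg(\varphi)\,\mu_\beta$, establish it via the smooth case, then integrate the continuous $f$. The difference is one of economy. The paper obtains the smooth case in one line by citing the projection formula \cite[Prop.~5.9]{gubler-kuennemann}, which already packages the tropical-chart localization, the tropical projection formula, and the Sturmfels--Tevelev bookkeeping you sketch; it then makes the passage from ``integral identity for smooth $g$'' to ``equality of Radon measures'' explicit by invoking density of $A_c^0(W)$ in $C_c^0(W)$ \cite[Prop.~3.3.5]{chambert-loir-ducros}. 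Your phrase ``equivalently, the argument yields the equality of signed Radon measures'' hides precisely this density step: knowing the formula for smooth $g$ does not by itself force equality of measures unless smooth functions are known to separate them.

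The degree bookkeeping you flag is a genuine hazard when done by hand. With a partition of unity $\{\rho_j\}$ on the source, the functions $\rho_j$ need not factor through the affine maps $F_j\colon N_{U'_j,\R}\to N_{U,\R}$, so the tropical projection formula does not apply termwise to $\rho_j\cdot F_j^*(\tilde\beta_U)$ in the way your sketch suggests; the clean route is exactly to absorb this into \cite[Prop.~5.9]{gubler-kuennemann} rather than reassembling it from scratch.
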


\begin{proof} 
Let $\alpha \in B_c^{n,n}(W)$ and let $\mu_\alpha$ be the associated Radon measure on $W$ (see \cite[Cor. 6.15]{gubler-kuennemann}). It follows from the projection formula in \cite[Prop. 5.9]{gubler-kuennemann} that 
\begin{equation} \label{projection formula}
 \deg(\varphi)\int_W g \alpha = \int_{\varphi^{-1}(W)} \varphi^*(g) \varphi^*(\alpha)
\end{equation}
for all smooth functions $g$ on $W$. Since the smooth functions with compact support in $W$ are dense in the space of continuous functions with compact support in $W$ equipped with the supremum norm \cite[Prop. 3.3.5]{chambert-loir-ducros}, we conclude from \eqref{projection formula} that $\deg(\varphi)\mu_\alpha = \varphi_*(\mu_{\varphi^*\alpha})$ as an identity of Radon measures. In particular, we get 
$$\deg(\varphi) \langle [f]_E, \alpha \rangle = \deg(\varphi) \int_W f \,d\mu_\alpha = \int_W f \, d\varphi_*(\mu_{\varphi^*\alpha}) = 
\langle [\varphi^*f]_E , \varphi^*\alpha \rangle $$
proving the claim. 
\end{proof}

In the following, we consider a  continuous metric $\metr$ on $\Lan$ over the open subset $W$ of $\Xan$ as before. 

\begin{cor} \label{projection formula for chern-current}
Let $\varphi:X' \to X$ be a surjective proper morphism of $n$-dimensional varieties over $K$. Then we have 
$$\varphi_*([c_1(\varphi^*(L)|_{\varphi^{-1}(W)},
\varphi^*\metr)]_E) = \deg(\varphi) [c_1(L|_W,\metr)]_E$$
as an identity of $\delta$-currents on $W$.
\end{cor}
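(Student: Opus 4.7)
The plan is to reduce the claim to a local computation with a frame $s$ of $L$, where the first Chern $\delta$-current is by definition $d'd''$ applied to $-\log\|s\|$, and then to combine Proposition \ref{push-forward of function} with the compatibility of push-forward of $\delta$-currents with the differential operators $d',d''$. Since the desired identity is one of $\delta$-currents on $W$ and both sides behave linearly, I may work locally and then glue (e.g.\ via a partition of unity of test $\delta$-forms, or by covering $W$ by analytifications of open subsets on which $L$ admits a frame).

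So let $s$ be a frame of $L$ over an open subset $U$ of $X$, set $W_U := U^\an \cap W$, and write $f := -\log\|s\|_\metr$, a continuous function on $W_U$. Then $\varphi^* s$ is a frame of $\varphi^*L$ over $\varphi^{-1}(U)$ and $-\log\|\varphi^*s\|_{\varphi^*\metr} = \varphi^* f$ on $\varphi^{-1}(W_U)$. By the very definition of the first Chern $\delta$-current (\cite[7.7]{gubler-kuennemann}) one has
\[
[c_1(L|_{W_U},\metr)]_E = d'd''[f]_E \quad\text{and}\quad [c_1(\varphi^*L|_{\varphi^{-1}(W_U)},\varphi^*\metr)]_E = d'd''[\varphi^*f]_E.
\]
Applying Proposition \ref{push-forward of function} to $f$ yields $\varphi_*[\varphi^*f]_E = \deg(\varphi)[f]_E$, so it remains to show that $\varphi_*$ commutes with $d'd''$ on $\delta$-currents.

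This commutation is a formal consequence of duality: for any $\delta$-current $T$ on $\varphi^{-1}(W)$ and any test $\delta$-form $\alpha \in B_c(W)$ of complementary bidegree, the definitions of $\varphi_*$ and of $d',d''$ on $\delta$-currents give
\[
\langle \varphi_*(d'd''T),\alpha\rangle = \langle d'd''T,\varphi^*\alpha\rangle = \langle T, d'd''\varphi^*\alpha\rangle = \langle T, \varphi^*(d'd''\alpha)\rangle = \langle d'd''\varphi_*T,\alpha\rangle,
\]
using that $\varphi^*$ commutes with $d',d''$ on $\delta$-forms. Combining these steps,
\[
\varphi_*[c_1(\varphi^*L,\varphi^*\metr)]_E = \varphi_* d'd''[\varphi^*f]_E = d'd''\varphi_*[\varphi^*f]_E = \deg(\varphi) d'd''[f]_E = \deg(\varphi)[c_1(L,\metr)]_E
\]
on $W_U$, which is the desired identity.

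The main point to verify carefully is that the local identities above patch to a global equality, and that the commutation $\varphi_* d'd'' = d'd''\varphi_*$ is indeed available in the $\delta$-current formalism of \cite{gubler-kuennemann}; once one has these, the corollary is immediate from Proposition \ref{push-forward of function}. No subtlety arises from $\deg(\varphi)$ since $\varphi$ is proper, surjective, and between varieties of the same dimension, hence generically finite of a well-defined degree.
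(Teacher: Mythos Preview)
Your argument is correct and its core computation coincides with the paper's: both reduce to Proposition \ref{push-forward of function} together with the formal identity $\varphi_*\,d'd'' = d'd''\,\varphi_*$ on $\delta$-currents, which you verify by duality exactly as the paper does (unrolling the pairing against a test form $\alpha$).

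The one genuine difference is the reduction step. You localize to open subsets $W_U$ on which $L$ has a frame and then glue; the paper instead avoids any localization by choosing a global reference $\delta$-metric $\metr_0$ on $L$ (smooth or formal), for which the identity is already known as an identity of $\delta$-\emph{forms} via the projection formula \cite[Prop.~5.9]{gubler-kuennemann}, and then uses linearity in the metric to reduce to the case $L=O_X$ with $\metr$ given by a single continuous function $f$ on all of $W$. Your route is a bit more elementary since it does not invoke the $\delta$-form projection formula, at the cost of the patching step; the paper's route is cleaner globally but needs the extra input about $\delta$-metrics. Your caveats about gluing and about $\varphi_*$ commuting with $d',d''$ are the right things to check, and both go through in the $\delta$-current formalism (partitions of unity by smooth functions exist, and pull-back of $\delta$-forms commutes with $d',d''$).
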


\begin{proof}
For a $\delta$-metric (i.e. $c_1(L,\metr)$ is a $\delta$-form), this identity follows directly from the projection formula in \cite[Prop. 5.9]{gubler-kuennemann}. It is clear that $L$ has a $\delta$-metric $\metr_0$ as we may choose a smooth metric \cite[Prop. 6.2.6]{chambert-loir-ducros} or a formal metric of a compactification of $X$ (see Section \ref{forma}) using \cite[Rem. 9.16]{gubler-kuennemann}. Using that the claim holds for $\metr_0$ and linearity, it remains to prove the corollary in the special case $L=O_X$ with a metric induced by a continuous function $f$ on $W$, i.e. $f=-\log \|1\|$. For $\alpha \in B_c^{n,n}(W)$, we have 
$$\langle \varphi_*([c_1(\varphi^*(L)|_{\varphi^{-1}(W)},
\varphi^*\metr)]_E) , \alpha \rangle = \langle d'd''[\varphi^*f]_E , \varphi^* \alpha \rangle = 
\langle \varphi_*[\varphi^*f]_E, d'd'' \alpha \rangle$$
and
$$\langle [c_1(L|_W,\metr)]_E , \alpha \rangle = \langle d'd''[f]_E , \alpha \rangle = \langle [f]_E, d'd'' \alpha \rangle.$$
We conclude that the claim follows from Proposition \ref{push-forward of function}.
\end{proof}

\begin{cor} \label{descent of psh}
Let $\varphi:X' \to X$ be a surjective proper morphism of $n$-dimensional varieties over $K$ and let $\metr$ be a  continuous metric  on $\Lan$ over the open subset $W$ of $\Xan$. If $\varphi^*\metr$ is psh (resp. $\delta$-psh, resp. functorial psh, resp. functorial $\delta$-psh) over $\varphi^{-1}(W)$, then $\metr$ is psh (resp. $\delta$-psh, resp. functorial psh, resp. functorial $\delta$-psh) over $W$. 
\end{cor}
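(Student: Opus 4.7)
The plan is to reduce everything to the projection formula in Corollary \ref{projection formula for chern-current}. First I would establish the non-functorial cases (a) $\delta$-psh and (b) psh directly, and then deduce the functorial versions (c), (d) by base change.

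Suppose $\varphi^*\metr$ is $\delta$-psh on $\varphi^{-1}(W)$, so that $[c_1(\varphi^*(L)|_{\varphi^{-1}(W)},\varphi^*\metr)]_E$ is a positive $\delta$-current. Let $\beta\in B_c^{n-1,n-1}(W)$ be a strongly positive $\delta$-form. Since $\varphi$ is proper, $\varphi^*\beta$ has compact support contained in $\varphi^{-1}(W)$, and strong positivity is preserved by pull-back of $\delta$-forms by Definition \ref{positive delta-forms on W}. Applying Corollary \ref{projection formula for chern-current} yields
\begin{equation*}
\deg(\varphi)\langle [c_1(L|_W,\metr)]_E,\beta\rangle
= \langle [c_1(\varphi^*(L)|_{\varphi^{-1}(W)},\varphi^*\metr)]_E,\varphi^*\beta\rangle \geq 0.
\end{equation*}
Symmetry of the Chern $\delta$-current is automatic since locally it has the form $d'd''[-\log\|s\|]_E$, so $[c_1(L|_W,\metr)]_E$ is a positive $\delta$-current and hence $\metr$ is $\delta$-psh on $W$. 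Statement (b) is handled in exactly the same way using a positive $\alpha\in A_c^{n-1,n-1}(W)$ as test form: here $\varphi^*\alpha$ is positive smooth by Proposition \ref{positivity properties of superforms}(c) applied to the tropical chart description in Remark \ref{compare cld}, and Corollary \ref{projection formula for chern-current} restricts to the sought projection identity on smooth test forms because the $\delta$-current $[c_1(L|_W,\metr)]_E$ pairs with a smooth form exactly as $[c_1(L|_W,\metr)]_D$ does.

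For the functorial cases, fix an arbitrary morphism $\psi\colon Y \to X$ of varieties and consider the fibre product $Y\times_X X'$. Since $X$ and $X'$ have the same dimension $n$ and $\varphi$ is surjective and proper, $\varphi$ is generically finite; hence so is the first projection $\pi_1\colon Y\times_X X' \to Y$. Choose an irreducible component $Y''$ of $Y\times_X X'$ that dominates $Y$, equipped with its reduced structure; then $Y''$ is a variety of dimension $\dim Y$, and $\pi_1|_{Y''}\colon Y''\to Y$ is a surjective proper morphism of varieties of the same dimension. Writing $\pi_2\colon Y''\to X'$ for the restriction of the second projection, we have $\varphi\circ\pi_2 = \psi\circ\pi_1|_{Y''}$, so
\begin{equation*}
(\pi_1|_{Y''})^*(\psi^*\metr) = \pi_2^*(\varphi^*\metr).
\end{equation*}
If $\varphi^*\metr$ is functorial psh (resp.\ functorial $\delta$-psh), then the right-hand side is psh (resp.\ $\delta$-psh) on $\pi_2^{-1}(\varphi^{-1}(W)) = (\pi_1|_{Y''})^{-1}(\psi^{-1}(W))$. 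Applying the already established case (a) (resp.\ (b)) to the surjective proper morphism $\pi_1|_{Y''}$ yields that $\psi^*\metr$ is psh (resp.\ $\delta$-psh) on $\psi^{-1}(W)$. Since $\psi$ was arbitrary, $\metr$ is functorial psh (resp.\ functorial $\delta$-psh) on $W$.

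The only subtle point is the fibre product argument in the functorial reduction, where one must guarantee that a dominating irreducible component of $Y\times_X X'$ has the same dimension as $Y$ and that the restricted projection is still surjective and proper; this is ensured by the generic finiteness of $\varphi$ and the stability of surjectivity and properness under base change. The rest of the argument is a direct application of Corollary \ref{projection formula for chern-current} together with the closure of positivity of (smooth forms and) $\delta$-forms under pull-back.
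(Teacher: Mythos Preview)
Your overall strategy matches the paper's exactly: deduce the non-functorial cases from Corollary \ref{projection formula for chern-current} together with the fact that positivity of $\delta$-forms (and smooth forms) is preserved by pull-back, and then obtain the functorial cases via a cartesian diagram. Two minor points on the non-functorial part: first, since $\delta$-psh requires the Chern $\delta$-current to be \emph{positive} (not merely weakly positive), you should test against positive $\beta\in B_c^{n-1,n-1}(W)$ rather than strongly positive ones; this is harmless because all three positivity notions coincide in bidegree $(n-1,n-1)$, but it would be cleaner to state it that way. Second, there is no need to single out strong positivity for stability under pull-back: all positivity notions for $\delta$-forms are closed under pull-back by Definition \ref{positive delta-forms on W}.

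There is, however, a genuine gap in your base-change step. You assert that since $\varphi$ is generically finite, so is $\pi_1\colon Y\times_X X'\to Y$, and hence any component $Y''$ dominating $Y$ has $\dim Y''=\dim Y$. This fails when $\psi(\eta_Y)$ lies in the closed locus of $X$ over which $\varphi$ has positive-dimensional fibres. For instance, take $X=\mathbb P^3$, let $\varphi\colon X'\to X$ be the blow-up along a line $\ell$, and let $\psi\colon Y=\ell\hookrightarrow X$; then $Y\times_X X'$ is the exceptional $\mathbb P^1$-bundle over $\ell$, which is irreducible of dimension $2>\dim Y$, and $\pi_1$ is nowhere finite. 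So your $Y''$ need not have the right dimension, and you cannot invoke the already proved non-functorial case directly.

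The fix is short but necessary. Inside the fibre product choose a closed point in the (non-empty) fibre over $\eta_Y$ and take its closure $Y'''$ in $Y\times_X X'$; then $Y'''$ is an integral closed subscheme with $\pi_1|_{Y'''}\colon Y'''\to Y$ dominant and generically finite, hence surjective proper between varieties of the same dimension. Because you are assuming $\varphi^*\metr$ is \emph{functorial} psh (resp.\ $\delta$-psh), the pull-back along $\pi_2|_{Y'''}\colon Y'''\to X'$ is psh (resp.\ $\delta$-psh), and this equals $(\pi_1|_{Y'''})^*\psi^*\metr$. Now the non-functorial descent applies to $\pi_1|_{Y'''}$ and gives $\psi^*\metr$ psh (resp.\ $\delta$-psh) on $\psi^{-1}(W)$, completing the argument.
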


\begin{proof} If $\varphi^*\metr$ is $\delta$-psh over $\varphi^{-1}(W)$, then $\metr$ is $\delta$-psh by Corollary \ref{projection formula for chern-current}. Indeed,  the proper push-forward of a positive $\delta$-current is positive since positivity of $\delta$-forms is closed under pull-back. All these facts for $\delta$-currents yield immediately the corresponding facts for currents and so the same argument works for psh. Using a suitable cartesian diagram, the remaining two claims involving functoriality follow easily.
 \end{proof}

\section{Lifting varieties}\label{lifti}

Let $(F,|\phantom{a}|)$ be a field with a non-archimedean absolute value.
Let $F^\circ$, $F^{\circ\circ}$, and $\tilde F=F^\circ/F^{\circ\circ}$
denote the valuation ring, its maximal ideal, and the
corresponding residue class field. 

{The following theorem enables us to lift closed subsets from the special 
fibre of a $F^\circ$-model to the generic fibre. Amaury Thuillier has 
told the authors that he has found a similar argument.}

\begin{thm}\label{liftvarieties}
Let $\Xcal$ denote a flat scheme of finite type over 
${\rm Spec}\,F^\circ$ with generic fibre $X=\Xcal_\eta$
and special fibre $\Xcal_s$.
Let $V$ be an irreducible closed subset of $\Xcal_s$.
Then there exists an integral closed subscheme $Y$ of $X$ such
that $V$ is an irreducible component of the special 
fibre $(\overline{Y})_s$ of the schematic closure
$\overline Y$ of $Y$ in $\Xcal$  {and such that $\dim(Y)=\dim(V)$}. 
\end{thm}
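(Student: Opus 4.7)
The plan is to localize at the generic point of $V$ and then cut $\mathcal{X}$ by $c := \mathrm{codim}(V,\mathcal{X}_s)$ elements transverse to $V$, using Krull's Hauptidealsatz to control the resulting prime. First I would reduce to the affine case $\mathcal{X} = \mathrm{Spec}(A)$ with $A$ a flat $F^\circ$-algebra of finite type, and let $\mathfrak{q}\subset A$ be the prime ideal with generic point $\eta_V$; set $R := A_{\mathfrak{q}}$, a local ring flat over $F^\circ$ with maximal ideal $\mathfrak{q}R$, residue field $k(\eta_V)$, and quotient $R/F^{\circ\circ}R$ equal to the local ring of $\mathcal{X}_s$ at $\eta_V$, of dimension $c$. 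Fix any non-zero $\pi\in F^{\circ\circ}$; since the valuation has rank one, a prime of $R$ contains $\pi$ if and only if it contains $F^{\circ\circ}$, and flatness makes $\pi$ a non-zero-divisor in $R$. Standard dimension theory then gives $\dim R = c+1$.

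For the construction, I would pick $x_1,\ldots,x_c \in \mathfrak{q}R$ whose images in $R/\pi R$ form a system of parameters, let $\mathfrak{p}_R$ be a minimal prime of the ideal $I := (x_1,\ldots,x_c)R$ in $R$, and let $\mathfrak{p}\subset A$ be its restriction; this yields a closed integral subscheme $\mathcal{Y} := V(\mathfrak{p})$ of $\mathcal{X}$. The key properties to verify, all coming from Krull's Hauptidealsatz, are: (a) $\mathrm{ht}(\mathfrak{p}_R)\leq c$, so $\mathfrak{p}_R\subsetneq \mathfrak{q}R$; (b) $\pi\notin \mathfrak{p}_R$, since otherwise $\mathfrak{p}_R$ would contain $I + \pi R$, which is $\mathfrak{q}R$-primary, forcing $\mathfrak{p}_R = \mathfrak{q}R$ and contradicting (a); (c) $\mathfrak{p}\cap F^\circ = 0$, immediate from (b) and rank one; and (d) $\dim R/\mathfrak{p}_R = 1$, where $\leq 1$ follows from Krull applied to the image of $\pi$ in $R/\mathfrak{p}_R$ (noting that $\mathfrak{p}_R + \pi R$ is $\mathfrak{q}R$-primary because it contains $I+\pi R$) and $\geq 1$ from $\mathfrak{p}_R \subsetneq \mathfrak{q}R$.

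To conclude, I would take $Y := \mathcal{Y}\cap X$. By (c), $A/\mathfrak{p}$ is $F^\circ$-torsion-free and therefore flat, so $\overline{Y} = \mathcal{Y}$ inside $\mathcal{X}$. Property (d) rephrases as: $\mathfrak{q}$ is minimal over $\mathfrak{p} + F^{\circ\circ}A$, which means exactly that $V$ is an irreducible component of $\overline{Y}_s$; and the identity $\dim R/\mathfrak{p}_R = 1$ combined with the fibre dimension formula for flat finite type $F^\circ$-schemes gives $\dim Y = \dim V$.

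The main obstacle I expect is justifying the dimension theory rigorously when $F^\circ$ is a non-discrete rank-one valuation ring: then $A$ and $R$ are generally non-Noetherian, so Krull's Hauptidealsatz and the fibre dimension formula are not directly available. This can be handled either by invoking their extensions to flat finitely presented algebras over valuation rings (in the spirit of Raynaud--Gruson), or, more concretely, by a Noetherian approximation: descend $A$ to a flat finite type algebra $A_0$ over a discrete valuation subring $F_0^\circ \subseteq F^\circ$ containing the finitely many coefficients appearing in a presentation of $A$, carry out the above construction in the Noetherian setting over $F_0^\circ$, and base-change the resulting subscheme back to $F$ while checking that the properties (a)--(d) are preserved.
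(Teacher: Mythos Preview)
Your proposal is correct in outline and takes a genuinely different route from the paper's proof. You argue by pure commutative algebra: localize at the generic point of $V$, choose a system of parameters for the special fibre modulo $\pi$, and take a minimal prime over the ideal they generate; Krull's Hauptidealsatz controls the height, flatness follows, and the dimension count drops out. The paper instead proceeds geometrically: it applies Noether normalization to $V$ to obtain a finite surjection $V \to \mathbb{A}^r_{\tilde F}$, lifts this to $\varphi:\Xcal \to \mathbb{A}^r_{F^\circ}$, base-changes to the Gauss point of $\mathbb{A}^r$ (so that the generic point of $V$ becomes a \emph{closed} point of the new special fibre), and then uses the surjectivity and anticontinuity of the reduction map on Berkovich spaces, together with density of closed points, to produce a closed point $\xi'$ of the generic fibre whose reduction is $\eta_{V'}$; the desired $Y$ is the Zariski closure of the image of $\xi'$, and the dimension bound is obtained via transcendence degrees.

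What each approach buys: the paper's argument works uniformly over an arbitrary rank-one valuation ring with no Noetherian hypothesis, at the cost of importing the analytic machinery (admissible formal schemes, Berkovich reduction, \cite[Lemme~6.5.1]{chambert-loir-ducros}). Your argument is more elementary and self-contained in spirit, but---as you correctly flag---the Hauptidealsatz and the fibre-dimension identities are not available verbatim when $F^\circ$ is non-discrete. Your suggested remedies (Raynaud--Gruson style results for finitely presented flat $F^\circ$-algebras, or Noetherian approximation to a discrete $F_0^\circ \subset F^\circ$) are the right ones; for the approximation route, note that after constructing $\mathcal Y_0$ over $F_0^\circ$ you must pass to an irreducible component of $\mathcal Y_0 \times_{F_0^\circ} F^\circ$ containing $\eta_V$ and check that the equidimensionality of the special fibre of a flat integral finite-type scheme over a valuation ring survives the base change, so that $\dim Y = \dim V$ is preserved. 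This is routine but should be stated.
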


\proof
We may assume without loss of generality 
that the absolute value on $F$ is non-trivial and
that $\Xcal={\rm Spec}\,A$ is an affine scheme.
We consider $V$ as an integral closed subscheme of $\KX_s$ .
Let $r$ denote its dimension.
We choose a closed embedding
\[
\Xcal={\rm Spec}\,A\hookrightarrow \A_{F^\circ}^N.
\]
As in the proof of Noether normalization, we can choose 
a generic projection $\A_{\tilde F}^N\to \A_{\tilde F}^r$
such that the induced morphism 
\[
\psi:V\hookrightarrow \Xcal_s\to \A_{\tilde F}^r
\] 
is finite and surjective.
The morphism $\Xcal_s\to \A_{\tilde F}^r$ clearly lifts to a morphism
\[
\varphi:\Xcal\longrightarrow \A_{F^\circ}^r.
\]
We equip the function field $L=F(x_1,\ldots,x_r)$ of 
 {$\A_{F^\circ}^r=\Spec (F^\circ[x_1,\ldots,x_r])$} 
with the Gauss norm.
Base change to the valuation ring $L^\circ$ of $L$ yields a cartesian diagram
\[
\xymatrix{
\Xcal' \ar[d]^p \ar[r] &{\rm Spec}\,L^\circ \ar[d]\\
\Xcal\ar[r]^\varphi&\A_{F^\circ}^r}.
\]
and corresponding cartesian diagrams for the generic and the special fibre
\[
\xymatrix{
X'\ar[d]\ar[r]&{\rm Spec}\,L\ar[d]&
(\Xcal')_s\ar[d]\ar[r]&{\rm Spec}\,(\tilde L)\ar[d]\\
X\ar[r]&\A_F^r&\Xcal_s\ar[r]&\A_{\tilde F}^r.}
\]
The residue class field $\tilde L=\tilde F(x_1,\ldots,x_r)$ of $L$
is canonically isomorphic to the residue class field 
$\kappa(\eta)$ of the generic
point $\eta$ of $\A_{\tilde F}^r$.
Base change yields the canonical diagram
\[
\xymatrix{
V'=\psi^{-1}(\{\eta\})\ar[d]\ar[r]&
(\Xcal')_s\ar[d]\ar[r]&{\rm Spec}\,(\tilde L)\ar[d]_\eta\\
V\ar[r]&\Xcal_s\ar[r]&\A_{\tilde F}^r}
\]
which shows that $V'$ is finite over $\tilde L$.
The generic point $\eta_V$ of $V$ maps to $\eta$
and determines the generic point $\eta_{V'}$
of the fibre $\psi^{-1}(\{\eta\})$. 
Then $\eta_{V'}$ is a closed point on $(\Xcal')_s$ as $V'$ is finite over $\tilde L$.
We have $\KX'={\rm Spec}\,A'$ for $A'=A\otimes_{F^\circ[x_1,\ldots, x_r]}L^\circ$. Any finitely generated ideal in a valuation ring $R$ is principal and hence an $R$-module is flat if and only if it is torsion free. 
It follows that $L^\circ$ is a flat $F^\circ[x_1,\ldots, x_r]$-module and hence $A'$ is a flat $A$-module. Since $A$ is a flat $F^\circ$-algebra, we deduce that $A'$ is a flat $F^\circ$-module. Then $A'$ is also torsion free as an $L^\circ$-module and hence flat over $L^\circ$. 
We choose a non-zero $\rho\in L^{\circ\circ}$ and define 
\[
\widehat{\Xcal'}={\rm Spf}\,\lim_{\genfrac{}{}{0pt}{}{\longleftarrow}{k}}A'/\rho^kA'.
\]
as the $\rho$-adic completion of $\Xcal'$. 
Let $\hat L$ denote the completion of $L$.
Then $\widehat{\Xcal'}$ is an admissible formal $\hat L^{\circ}$-scheme 
in the sense of \cite[Sect. 7.4]{bosch-lectures-2015} as shown in \cite[\S 6.1]{ullrich}.
The generic fibre $(X')^\circ$ of $\widehat{\Xcal'}$ is the 
affinoid Berkovich analytic space associated with the strict
affinoid $\hat L$-algebra
\[
\KA=\hat L\otimes_{\hat L^\circ}\lim_{\genfrac{}{}{0pt}{}{
\longleftarrow}{k}}A'/\rho^kA'.
\]
It coincides with the affinoid domain in $(X')^\an$ given by all points
in $(X')^\an$ whose reduction in 
$(\widehat{\Xcal'})_s=(\Xcal')_s$ is defined 
\cite[\S 4]{gubler-guide}.
The reduction map
\[
\pi\,:(X')^\circ=\Mcal(\KA)\longrightarrow (\widehat{\Xcal'})_s=\Xcal'_s
\]
is surjective 
and 
anti-continuous in the sense that inverse images of closed subsets
are open.
{This follows from  \cite[Prop. 2.4.4, Cor. 2.4.2]{berkovich-book} as explained in \cite[\S 2]{grw2}.} 
The closed point $\eta_{V'}$ in $(\Xcal')_s$ yields the open subset
$\pi^{-1}(\{\eta_{V'}\})$ of $\Mcal(\KA)$.
We also get that $\pi^{-1}(\{\eta_{V'}\})$ is contained in the relative 
interior ${\rm Int}\,(\Mcal(\KA) / \Mcal(L))$
by \cite[Lemme 6.5.1]{chambert-loir-ducros}.
{By \cite[Thm. 3.4.1]{berkovich-book}, $(X')^\an$ is a closed analytic space which means boundaryless and hence \cite[Prop. 3.1.3(ii)]{berkovich-book} yields}
\[
{\rm Int}\,(\Mcal(\KA) / (X')^\an)={\rm Int}\,(\Mcal(\KA) / \Mcal(L)).
\]
The set on the lefthand side coincides by 
\cite[Prop. 3.1.3(i)]{berkovich-book} with the topological interior of 
$(X')^\circ$ in $(X')^\an$.
It follows that $\pi^{-1}(\{\eta_{V'}\})$ is open in $(X')^\an$. 
Each closed point $\xi'$ of $X'$ determines a closed point
in $(X')^\an$ (which we denote again by $\xi'$) and  these 
points form a dense subset of $(X')^\an$ by \cite[2.6]{gubler-guide}. 
Hence there exists a closed point $\xi'\in X'$ such that
$\pi\,(\xi')=\eta_{V'}$.
We get $\eta_{V'} \in \overline{\{\xi'\}}$ where the  closure
is taken in $X'$ \cite[4.8]{gubler-guide}.
Let $p:\Xcal'\to \Xcal$ denote the base change morphism.
Let $Y$ be the Zariski-closure $\overline{\{p(\xi')\}}$ of $p(\xi')$ 
in $X$.
It follows from our construction that $p(\eta_{V'})=\eta_V$.
Let $\overline Y$ denote the closure of $p(\xi')$ in $\Xcal$.
From $\eta_{V'}\in \overline{\{\xi'\}}$ we get $\eta_V\in \overline Y$ and 
hence $V\subseteq \overline Y$.
The flatness of $\overline Y$ over $\Spec\, F^\circ$ yields
$\dim Y=\dim (\overline Y)_s$.
It remains to show $\dim Y\leq \dim V$.
We get ${\rm trdeg}\,\bigl(\kappa(\xi')/L\bigr)=0$ as
$\xi'$ is a closed point of $X'$.
Hence
$$\dim Y
\leq{\rm trdeg}\,\bigl(\kappa(p(\xi'))/F\bigr)
\leq{\rm trdeg}\,\bigl(\kappa(\xi')/L\bigr)+{\rm trdeg}\,(L/F)
=\dim V$$
yields our claim.
\qed

\section{Formal metrics}\label{forma}

Recall that $K$ denotes always an algebraically closed field endowed with a complete non-trivial non-archimedean absolute value $|\phantom{a}|$. 
In this section, we gather various properties of formal metrics on line bundles of strictly $K$-analytic spaces. Such metrics play an important role in Arakelov geometry as we can use the underlying model for intersection theory (see \cite{gubler-crelle}).

\begin{art} \label{models}
Let $X$ be a compact reduced strictly $K$-analytic space in the sense of \cite{berkovich-ihes} (resp. a proper algebraic variety). 
A {\it formal model} (resp. {\it algebraic model}) of $X$ over $\kcirc$ is an admissible formal scheme (resp. a flat proper integral scheme) $\Xcal$ over $\kcirc$ with a fixed isomorphism from the generic fibre $\Xcal_\eta$ onto $X$. 
 {We assume reduced in the analytic case and integral in the algebraic case for simplicity as this fits to the general setup of this paper. We refer to \cite{gubler-martin-2016} for generalizations.}
We recall that an admissible formal scheme $\Xcal$ is a flat formal scheme over $\kcirc$ which is locally of topologically finite type. We will always identify $\Xcal_\eta$ with $X$ along the fixed isomorphism. 
\end{art}

\begin{rem} \label{Raynaud's theorem}
By \cite[Thm. 1.6.1]{berkovich-ihes}, the category of compact strictly $K$-analytic spaces  is equivalent to the category of quasicompact and quasiseparated rigid $K$-analytic varieties. This allows us to use {\it Raynaud's theorem}, proved by Bosch and L\"utkebohmert in \cite[Thm. 4.1]{bosch-luetkebohmert-1},  which shows that the category of quasicompact admissible formal schemes over $\kcirc$ localized in the class of admissible formal blowing ups is equivalent to the category of quasicompact and quasiseparated rigid $K$-analytic varieties. Note that this holds more generally with quasiparacompact replacing quasicompact \cite[Thm. 8.4.3]{bosch-lectures-2015}, but we don't need to work in such generality.

In the algebraic setting, {\it Nagata's compactification theorem} replaces Raynaud's theorem from above. It shows that for an algebraic variety $X$ over $K$ 
{there is a} flat proper variety $\Xcal$ over $\kcirc$ such that $X$ is an open dense subset of $\Xcal_\eta$.
This was proved by Nagata in the noetherian case (see \cite{nagata-1962,nagata-1963}) 
{and} generalizes to varieties over valuation rings by noetherian approximation. In particular, it is no restriction of generality working with proper schemes over $K$ in the algebraic case.
\end{rem}

\begin{art} \label{models and metrics}
We first recall that a line bundle on a strictly $K$-analytic space is a locally free sheaf of rank $1$ on the $\rm G$-topology. If the $K$-analytic space is good, then it is equivalent to have a locally free sheaf on the Berkovich topology  \cite[Prop. 1.3.4]{berkovich-ihes}. In this paper, we consider exclusively the $\rm G$-topology on strictly $K$-analytic spaces induced by the strictly affinoid subdomains. This $\rm G$-topology allows us to use results from rigid geometry (see \cite[\S 1.6]{berkovich-ihes}).

Let $L$ be a line bundle on the compact reduced strictly $K$-analytic space (resp. proper algebraic variety) $X$. A {\it formal model} (resp. {\it algebraic model}) of $(X,L)$ over $\kcirc$ is a pair $(\Xcal,\Lcal)$ where $\Xcal$ is a formal model (resp. algebraic model) of $X$ over $\kcirc$ and where $\Lcal$ is a line bundle on $\Xcal$ with a fixed isomorphism $\Lcal|_X \cong L$  which we will use again for identification.

A formal (resp. algebraic) $\kcirc$-model $(\Xcal,\Lcal)$ of $(X,L)$ gives rise to an {\it associated formal  metric} (resp. {\it associated algebraic metric}) $\metr_\Lcal$ on $L$ in the following way: for $x \in X$, let us choose a trivialization $\Ucal$ of $\Lcal$ in 
 {a neighbourhood of} 
the reduction 
$\pi(x)$ of $x$. The induced isomorphism $\Lcal(\Ucal) \cong \Ocal(\Ucal)$ allows to identify a local section $s$ with a regular function $\gamma$ and then we define 
\begin{equation} \label{definition formal metric}
\|s(x)\|_\Lcal = |\gamma(x) |.
\end{equation}
This is independent of the choice of the trivialization as a change involves multiplication with an invertible function in $\Ocal_{\Xcal,\pi(x)}$.
\end{art}

\begin{definition} \label{formal metrics}
A metric $\metr$ on $L$ is called a {\it formal metric} (resp. {\it algebraic metric}) if there is a formal (resp. algebraic) $\kcirc$-model $(\Xcal,\Lcal)$ of $(X,L)$ such that $\metr= \metr_\Lcal$. More generally, a {\it $\Q$-formal (resp. $\Q$-algebraic) metric}  on $L$ is a metric $\metr$  on $L$ such that there is a non-zero $n \in \N$ with $\metr^{\otimes n}$ a formal (resp. algebraic) metric 
 {of $L^{\otimes n}$}. 
\end{definition}

\begin{prop} \label{properties of formal metrics}
Let $L$ be a line bundle on the compact reduced strictly $K$-analytic space $X$. Then the following properties hold:
\begin{itemize}
\item[(a)] $\Q$-formal metrics on $L$ are continuous on $X$.
\item[(b)] $\Q$-formal metrics on $L$ are dense in the space of continuous metrics on $L$ with respect to  uniform convergence. 
\item[(c)] $L$ has a formal metric.
\item[(d)] The isometry classes of formally (resp. $\Q$-formally) metrized line bundles over $X$ form an abelian group. 
\item[(e)] The pull-back of a formal (resp. a $\Q$-formal) metric on $L$ with respect to a morphism $f:X' \to X$ of compact reduced strictly analytic spaces is  a formal (resp. a $\Q$-formal) metric on $f^*(L)$.
\item[(f)] The maximum and the minimum of two formal metrics on $L$ are again formal metrics on $L$. 
\end{itemize} 
\end{prop}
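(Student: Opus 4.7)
The plan is to prove the six parts in sequence, invoking Raynaud's theorem repeatedly to dominate formal models by common refinements, and treating the delicate parts (b) and (f) last. Parts (a) and (e) follow formally from \eqref{definition formal metric}: the function $x \mapsto |\gamma(x)|$ is continuous on each strictly affinoid trivialization, so formal metrics are continuous, and an $n$-th root handles $\Q$-formal metrics. For (e), Raynaud's theorem supplies a formal model $\Xcal'$ of $X'$ with a formal lift $F: \Xcal' \to \Xcal$ of $f$, and the pull-back metric is realized by $F^*\Lcal$.

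For (c), I would start with any formal model $\Xcal$ of $X$, trivialize $L$ on a finite strictly affinoid admissible cover of $X$, lift this cover by Raynaud to a formal cover of an admissible blowup of $\Xcal$, and extend the generic-fibre transition units to integrally invertible functions after a further blowup of the ideal they generate; this produces the desired line-bundle model $\Lcal$. For (d), given formally metrized $(L_i, \metr_i)$ with models $(\Xcal_i, \Lcal_i)$, I would dominate both by a common formal model $\Xcal$ via Raynaud, extend each $\Lcal_i$ to $\Xcal$ as in (c), and use tensor products and duals to obtain the product and dual formal metrics; the $\Q$-formal case reduces to the formal case by passing to a positive tensor power that clears denominators.

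The main obstacles are (b) and (f). For (f), after placing both formal metrics of $L$ on a common formal model $\Xcal$ as line bundles $\Lcal_1, \Lcal_2$, a local frame comparison yields a regular function $g$ with $|g| = \metr_1/\metr_2$; the metrics $\max(\metr_1, \metr_2)$ and $\min(\metr_1, \metr_2)$ correspond to $\Lcal_1 + \Lcal_2$ and $\Lcal_1 \cap \Lcal_2$ respectively, both of which become invertible on the admissible blowup of the ideal locally generated by $(1, g)$, yielding the required formal metrics. For (b), fixing a reference formal metric via (c) reduces density of continuous metrics on $L$ to density of continuous real functions on $X$ by $\Q$-linear combinations of $\log|\gamma_i|$ with $\gamma_i$ regular functions on formal models, which is the classical density theorem for $\Q$-model functions in $C(X, \R)$.

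I expect (f) to be the principal obstacle, as realizing max and min of formal metrics as formal metrics requires careful bookkeeping of admissible blowups and close analysis of the local-to-global passage along the cover. Property (b) is also conceptually delicate but relies on the standard density theorem for model functions as an external input.
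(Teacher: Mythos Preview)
Your proposal is correct and follows the same approach as the paper; the paper's proof is in fact much terser, deferring (b), (c), and (f) entirely to citations of \cite{gubler-crelle} (Thm.~7.12, Lemma~7.6, Lemma~7.8 respectively), whose content your sketches accurately reproduce. One minor slip in (f): the correspondence is reversed---$\max(\metr_1,\metr_2)$ corresponds to $\Lcal_1 \cap \Lcal_2$ and $\min$ to $\Lcal_1 + \Lcal_2$ (since $\Lcal_i$ is the sheaf of sections of norm at most $1$)---but this does not affect the argument.
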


\begin{proof}
Continuity in (a) means that $\|s\|$ is continuous for any local section $s$. This follows easily from \eqref{definition formal metric}. 

For (b), we note that the quotient of two metrics on $L$ gives rise to a metric on $O_X$ and evaluation at the constant section $1$ gives rise to a continuous function $f$ on $X$. Then we use the maximum norm of $|\log(f)|$  to measure the distance of the two metrics and claim (b) follows from \cite[Thm. 7.12]{gubler-crelle}.

Property (c) is shown in \cite[Lemma 7.6]{gubler-crelle} and 
 (d) follows  
from \eqref{definition formal metric}.

To prove (e), let $\metr_\Lcal$ be the formal metric on $L$ associated to the $\kcirc$-model $(\Xcal, \Lcal)$ of $(X,L)$. We use Raynaud's theorem from \ref{Raynaud's theorem} which shows the existence of a $\kcirc$-model $\Xcal'$ of $X'$ and of a morphism $\varphi:\Xcal'\rightarrow \Xcal$ with generic fibre $f:X' \to X$. Then (e) follows from 
\begin{equation} \label{line bundle and pull-back metric}
 \varphi^*(\metr_\Lcal)=\metr_{\varphi^*(\Lcal)}.
\end{equation}
Finally, (f) is proven in \cite[Lemma 7.8]{gubler-crelle}.
\end{proof}

\begin{rem} \label{properties of algebraic metrics}
For a proper variety $X$ over $K$, we have similar properties as in Proposition \ref{properties of formal metrics} formulated on $\Xan$. Most of them can be proved in the same way replacing Raynaud's theorem by Nagata's compactification theorem 
 {(use Vojta's version in \cite[Thm. 5.7]{vojta-2007}) for line bundles).} 
However, they also can be deduced from the fact that algebraic and formal metrics are the same on a line bundle over a proper variety \cite[Prop. 8.13]{gubler-kuennemann}.
\end{rem}

\begin{rem} \label{reduced special fibre}
Let $\Xcal$ be a formal $\kcirc$-model of the compact reduced strictly $K$-analytic space $X$. Then there is a canonical $\kcirc$-model $\Xcal'$ of $X$ over $\Xcal$ and a canonical morphism $\iota:\Xcal' \to \Xcal$ extending the identity such that $\Xcal'$ has reduced special fibre. It is obtained by covering  {the admissible formal scheme} $\Xcal$ by formal affine open subschemes $\Ucal = \Spf(A)$, noting that $\Acal := A \otimes_{\kcirc} K$ is a reduced strictly affinoid algebra and then gluing the admissible formal affine schemes $\Spf(\Acal^\circ)$ over $\kcirc$. 
 {Note that admissibility of the latter follows from our assumptions that $X$ is reduced and $K$ is algebraically closed {\cite[\S 6.4.3]{bosch-guentzer-remmert}}.} 
It is a standard fact that $\iota$ induces a finite surjective morphism between the special fibres (see \cite[1.10, Prop. 1.11]{gubler-crelle}).
\end{rem}

\begin{lem} \label{formal metric and model}
Let $\metr$ be a formal metric on the line bundle $L$ of the compact reduced strictly $K$-analytic space $X$. Then there is a model $(\Xcal,\Lcal)$ of $(X,L)$ over $\kcirc$ with $\Xcal_s$ reduced and with $\metr = \metr_\Lcal$. Moreover, for such a model $\Xcal$, the sheaf $\Lcal$ is canonically isomorphic to the sheaf
$$\Ucal \mapsto  \{s \in L(U) \mid \|s(x)\| \leq 1\}, $$
where $\Ucal$ ranges over all open subsets of $\Xcal$ and  $U$ is the generic fibre of $\Ucal$.
\end{lem}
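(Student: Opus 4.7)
The plan is to handle the two assertions of the lemma in turn. For the existence of a model with reduced special fibre representing $\metr$, I would start with any formal $\kcirc$-model $(\Xcal_0,\Lcal_0)$ of $(X,L)$ satisfying $\metr=\metr_{\Lcal_0}$, which is supplied by the definition of a formal metric. Applying Remark \ref{reduced special fibre} to $\Xcal_0$ yields a formal $\kcirc$-model $\Xcal$ of $X$ with reduced special fibre together with a morphism $\iota\colon\Xcal\to\Xcal_0$ that is the identity on the generic fibre. Setting $\Lcal:=\iota^{*}\Lcal_0$ produces a line bundle on $\Xcal$ restricting to $L$ on the generic fibre, and the pullback compatibility \eqref{line bundle and pull-back metric} gives $\metr_\Lcal=\iota^{*}\metr_{\Lcal_0}=\metr$, as required.

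For the sheaf description, fix an arbitrary model $(\Xcal,\Lcal)$ of the form just produced and denote by $\Lcal'$ the subsheaf of the sheaf of sections of $L$ defined by the formula in the statement. A tautological inclusion $\Lcal\hookrightarrow\Lcal'$ is immediate from \eqref{definition formal metric}: on any formal open $\Ucal$ trivializing $\Lcal$, a section $s\in\Lcal(\Ucal)$ corresponds to an element $\gamma\in\Ocal_\Xcal(\Ucal)$, and $\|s(x)\|=|\gamma(x)|\leq 1$ holds for every $x\in U=\Ucal_\eta$ because the restriction of $\gamma$ to $U$ is a power-bounded element of the corresponding strictly affinoid algebra. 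To prove that this inclusion is an isomorphism, I would argue locally on a formal affine open $\Ucal=\Spf(A)\subseteq\Xcal$ over which $\Lcal$ admits a trivializing frame. Via this frame the problem reduces to proving
\[
A=\{\gamma\in\Acal\colon |\gamma(x)|\leq 1\text{ for all }x\in U\}
\]
for the associated strictly affinoid algebra $\Acal:=A\otimes_{\kcirc}K$.

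In my view the only non-routine step is this last identification. It is the classical statement that, for a reduced strictly $K$-affinoid algebra $\Acal$, the subring $\Acal^\circ$ of power-bounded elements (equivalently, of elements of sup-norm at most $1$ on $\Mcal(\Acal)$) is an admissible flat $\kcirc$-algebra with reduced special fibre, and every admissible flat $\kcirc$-algebra with reduced special fibre is recovered from its generic fibre in this way. The argument rests on the coincidence of the sup-seminorm and the residue norm on a reduced strictly affinoid algebra, and makes essential use of the hypothesis that $K$ is algebraically closed and non-trivially valued. Once this identification is in hand, the resulting isomorphism $\Lcal\cong\Lcal'$ is canonical because any two local frames of $\Lcal$ differ by an element of $A^\times$, whose absolute value is identically $1$ on the generic fibre, and the local isomorphisms glue to a global isomorphism of sheaves on $\Xcal$.
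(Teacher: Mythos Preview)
Your proposal is correct and follows essentially the same path as the paper's proof. The paper disposes of both claims by citation---the first to Remark~\ref{reduced special fibre} and the second to \cite[Prop.~7.5]{gubler-crelle}---whereas you have unpacked those references: the pullback construction $\Lcal:=\iota^{*}\Lcal_{0}$ together with \eqref{line bundle and pull-back metric} is exactly how Remark~\ref{reduced special fibre} is applied, and your local reduction to the identity $A=\Acal^{\circ}$ for an admissible flat $\kcirc$-algebra with reduced special fibre is precisely the content of the cited proposition in \cite{gubler-crelle}.
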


\begin{proof}
The first claim follows 
{from} {Remark \ref{reduced special fibre}.}  
The second claim follows from \cite[Prop. 7.5]{gubler-crelle}.
\end{proof}

\begin{art} \label{non-reduced analytic space}
Let $X$ be a compact strictly $K$-analytic space which is not necessarily reduced and let $L$ be a line bundle on $X$. Then it is better to work with piecewise linear (resp. piecewise $\Q$-linear) metrics on $L$ (see \cite[\S 7]{gubler-crelle} for  details). Here, a metric $\metr$ on $L$ is called {\it piecewise linear}  if there is a $\rm G$-covering of $X$ which has frames of norm identically one, and {\it piecewise $\Q$-linear} if there is a non-zero $n \in  \N$ such that $\metr^{\otimes n}$ is a piecewise linear metric of $L^{\otimes n}$. 
The properties of Proposition \ref{properties of formal metrics} hold also for piecewise linear (resp. piecewise $\Q$-linear) metrics. If $X$ is reduced, then a piecewise linear (resp. piecewise $\Q$-linear) metric is the same as a formal (resp. $\Q$-formal) metric. In general, $X$ and the analytic space $X_{\rm red}$ with the induced reduced structure have the same $\rm G$-topology (see \cite[p. 389]{bosch-guentzer-remmert}). We conclude that pull-back gives a bijective correspondence between  piecewise linear (resp. piecewise $\Q$-linear) metrics  on $L$ and formal (resp. $\Q$-formal) metrics on  $L|_{X_{\rm red}}$. 
\end{art} 

\begin{prop} \label{local definition}
Let $X$ be a compact  strictly $K$-analytic space with a line bundle $L$. Then the definition of a piecewise linear metric is $\rm G$-local. 
\end{prop}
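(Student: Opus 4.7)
The plan is to deduce the statement directly from the transitivity of G-coverings. Suppose $(X_i)_{i\in I}$ is a G-covering of $X$ such that $\metr|_{X_i}$ is piecewise linear on $L|_{X_i}$ for each $i\in I$. I want to build a single G-covering of $X$ on which $L$ has frames with norm identically one, thereby exhibiting $\metr$ itself as piecewise linear in the sense defined in \ref{non-reduced analytic space}.

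First, I would apply the definition of piecewise linear to each restriction: for every $i\in I$, there exists a G-covering $(X_{ij})_{j\in J_i}$ of $X_i$ by strictly affinoid subdomains together with frames $s_{ij}$ of $L|_{X_{ij}}$ satisfying $\|s_{ij}\|\equiv 1$ on $X_{ij}$. Since $X$ is compact and each $X_i$ is compact (as $X_i$ is a finite union of strictly affinoid subdomains of $X$), I may pass to finite subfamilies at both levels without loss of generality.

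Next, I would verify that the combined family $\Xcal:=(X_{ij})_{i\in I,\, j\in J_i}$ is a G-covering of $X$. This is the standard transitivity property of admissible coverings in the G-topology of strictly affinoid subdomains: refining a G-covering $(X_i)$ of $X$ by G-coverings of each $X_i$ again yields a G-covering of $X$. Under the equivalence of strictly $K$-analytic and rigid $K$-analytic categories (Raynaud's theorem from \ref{Raynaud's theorem}), this is the transitivity axiom for admissible coverings of quasicompact quasiseparated rigid spaces.

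Finally, the family $\Xcal$ together with the frames $s_{ij}$ displays $\metr$ as having frames of norm identically one on a G-covering of $X$, so $\metr$ is piecewise linear by definition. The only real point to check is the transitivity of admissibility; this is not an obstacle but a standard G-topology fact, so the proof is essentially bookkeeping.
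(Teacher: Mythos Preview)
Your proof is correct and follows essentially the same approach as the paper: refine the given G-covering by the G-coverings witnessing piecewise linearity on each piece, and use transitivity of G-coverings to obtain a single G-covering of $X$ with frames of norm one. The paper compresses this into the phrase ``passing to a refinement''; your invocation of Raynaud's theorem for transitivity is unnecessary (it is a basic axiom of Grothendieck topologies), and the reduction to finite subfamilies is harmless but not needed for the argument.
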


\begin{proof}
We have to show that if there is a $\rm G$-covering $(V_i)_{i\in I}$ of $X$ such that the restriction of the metric $\metr$ on $L$ to $V_i$ is piecewise linear for all $i \in I$, then $\metr$ is a piecewise linear metric. Passing to a refinement, we may assume that every $V_i$ has a frame of norm identically $1$ and hence $\metr$ is piecewise linear. 
\end{proof}

\begin{prop} \label{extension of PL-metrics}
Let $L$ be a line bundle on the compact strictly $K$-analytic space $X$ and let $V$ be a compact  {strictly $K$-analytic} domain in $X$. 
\begin{itemize}
\item[(i)] Let $\metr_V$ be a piecewise linear metric on $L|_V$. Then there is a piecewise linear metric $\metr$ on $L$ which extends $\metr_V$.
\item[(ii)]  {Let $(\Vcal,\Lcal_V)$ be a formal model of $(V,L|_V)$. Then there is a 
formal model $(\Xcal,\Mcal')$ of $(X,L)$ and a formal open subset $\Vcal'$ of $\Xcal$ which is a formal model of $V$ such that there is a 
 morphism $\psi:\Vcal'\to \Vcal$  with $\psi|_V=\id_V$ and with $\psi^*(\Lcal_V)=\Mcal'|_{\Vcal'}$.} 
\end{itemize}
\end{prop}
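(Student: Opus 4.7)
The plan is to prove (ii) first and to deduce (i) from it via Remark \ref{non-reduced analytic space} and Lemma \ref{formal metric and model}.

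The scheme-theoretic content of (ii) is essentially a repeated application of Raynaud's theorem from \ref{Raynaud's theorem}. First, I would pick any formal model $\Xcal_1$ of $X$ whose existence is guaranteed by Raynaud. Since $V$ is a compact strictly $K$-analytic domain in $X$, the equivalence of categories between quasicompact admissible formal schemes (localized at admissible formal blowups) and quasicompact quasiseparated rigid $K$-analytic varieties allows me to assume, after replacing $\Xcal_1$ by a suitable admissible formal blowup, that $V$ is the generic fibre of a formal open subscheme $\Vcal_1 \subseteq \Xcal_1$. Now $\Vcal_1$ and the given $\Vcal$ are both formal models of $V$, so Raynaud provides a common admissible blowup: there exist a coherent open ideal $\Jcal \subseteq \Ocal_{\Vcal_1}$ and a morphism $\psi : \Vcal' \to \Vcal$ restricting to $\id_V$ on generic fibres, where $\Vcal' \to \Vcal_1$ is the blowup of $\Jcal$. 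Extending $\Jcal$ to a coherent open ideal on all of $\Xcal_1$ and blowing up gives the required $\Xcal$ with $\Vcal' \subseteq \Xcal$ a formal open subscheme.

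For the line bundle, the aim is to build $\Mcal'$ on $\Xcal$ with $\Mcal'|_{\Vcal'} = \psi^*\Lcal_V$ and $\Mcal'|_X = L$. I would begin by lifting $L$ to some formal model $\Mcal_0$ on $\Xcal$ (again by Raynaud, possibly after further admissible blowup). On the overlap $\Vcal'$ the two line bundles $\Mcal_0|_{\Vcal'}$ and $\psi^*\Lcal_V$ both have generic fibre $L|_V$, so their difference is a line bundle on $\Vcal'$ which is trivial on the generic fibre. Such a line bundle comes from a Cartier divisor supported on $\Vcal'_s$, and after another admissible formal blowup of $\Xcal$ it becomes trivial, so that $\Mcal_0|_{\Vcal'}$ and $\psi^*\Lcal_V$ become isomorphic. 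Gluing $\Mcal_0$ (away from $\Vcal'$) with $\psi^*\Lcal_V$ (on $\Vcal'$) then yields the desired $\Mcal'$.

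To deduce (i), I would first pass to $X_{\rm red}$ via the bijective G-topological correspondence in \ref{non-reduced analytic space}, so that a piecewise $\Q$-linear metric $\metr_V$ on $L|_V$ corresponds, after passing to $L^{\otimes n}$ for some $n>0$, to a formal metric on the reduction. By Lemma \ref{formal metric and model} this metric comes from a formal model $(\Vcal,\Lcal_V)$ with $\Vcal_s$ reduced. Applying (ii) produces $(\Xcal,\Mcal')$ together with $\psi : \Vcal' \to \Vcal$ restricting to the identity on $V_{\rm red}$, and by \eqref{line bundle and pull-back metric} the associated formal metric $\metr_{\Mcal'}$ restricts on $V_{\rm red}$ to $\metr_{\psi^*\Lcal_V} = \psi^*\metr_{\Lcal_V} = \metr_V^{\otimes n}$. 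Taking the $n$-th root and transporting back to $X$ via \ref{non-reduced analytic space} gives the desired piecewise $\Q$-linear extension.

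The main obstacle is the line-bundle step in (ii): admissible formal blowups allow one to adjust formal schemes and their open subschemes quite flexibly, but line bundles are more rigid. The crucial input one must use is that a line bundle on an admissible formal scheme with trivial generic fibre becomes trivial after some further admissible formal blowup — this is what enables the gluing of $\Mcal_0$ with $\psi^*\Lcal_V$ along $\Vcal'$ and is where most of the technical work sits.
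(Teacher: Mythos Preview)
Your overall strategy---reduce (i) to (ii) via \ref{non-reduced analytic space}, and handle the scheme-theoretic part of (ii) by Raynaud's theorem together with \cite[Cor.~5.4]{bosch-luetkebohmert-2}---matches the paper. The gap is exactly where you flagged it: the line-bundle step.

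The claim that a line bundle on $\Vcal'$ with trivial generic fibre ``becomes trivial after another admissible formal blowup of $\Xcal$'' is false. Such a line bundle corresponds (when $\Vcal'_s$ is reduced) to a formal metric on $O_V$, namely the function $-\log\|1\|$ on $V$; admissible blowups do not change the generic fibre and hence do not change this metric, so if $\metr_{\Mcal_0}$ and $\metr_{\psi^*\Lcal_V}$ differ on $L|_V$ (and there is no reason they should agree), no blowup will make the difference bundle trivial in the required sense. Moreover, ``gluing $\Mcal_0$ away from $\Vcal'$ with $\psi^*\Lcal_V$ on $\Vcal'$'' is not a well-posed operation: $\Vcal'$ is a formal \emph{open} subscheme of $\Xcal$, so there is no formal open complement along which to glue.

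The paper takes a genuinely different route for this step. After arranging $\Vcal \hookrightarrow \Xcal$ open with $\Xcal_s$ reduced and picking a model $(\Xcal,\Lcal)$ of $(X,L)$, it scales $\metr_\Lcal$ by a small element of $|K^\times|$ so that $\metr_\Lcal \leq \metr_V$ on $L|_V$; by Lemma~\ref{formal metric and model} this makes $\Lcal_\Vcal$ a coherent $\Ocal_\Vcal$-\emph{submodule} of $\Lcal|_\Vcal$. One then extends $\Lcal_\Vcal$ to a coherent submodule $\Ncal \subseteq \Lcal$ on all of $\Xcal$ (as in \cite[Lemma~5.7]{bosch-luetkebohmert-1}), sets $\Mcal := \Ncal + \pi\Lcal$ for a suitable small $\pi \in \kcirc\setminus\{0\}$ to force the generic fibre to be $L$ while keeping $\Mcal|_\Vcal = \Lcal_\Vcal$, and finally applies the flattening techniques of \cite[Thm.~4.1, Prop.~4.2]{bosch-luetkebohmert-2} to obtain an admissible blowup $\Xcal' \to \Xcal$ with center \emph{outside} $\Vcal$ on which the strict transform $\Mcal'$ is flat, hence a line bundle. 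The key idea you are missing is to work with coherent submodules of a fixed model of $L$ and then flatten, rather than trying to glue two independently chosen line bundles.
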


\begin{proof}
By the final remark in \ref{non-reduced analytic space}, we may assume that $X$ is reduced and  {that $\metr_V$ is a formal metric.}  
Then there is a formal model $(\Vcal,\Lcal_\Vcal)$ of $(V,L)$  {with associated formal metric $\metr_V$ and it is enough to prove (ii).}  
By Proposition \ref{properties of formal metrics}(c), there is a formal model $(\Xcal,\Lcal)$ of $(X,L)$. Modifiying $\Xcal$ by an admissible blowing up and replacing $\Vcal$ by a dominating formal model of $V$, Raynaud's theorem gives a morphism $\Vcal \to \Xcal$ extending the $\rm G$-open immersion $V \to X$. By \cite[Cor. 5.4]{bosch-luetkebohmert-2}, we may even assume that $\Vcal \to \Xcal$ is an open immersion. By 
 {Remark \ref{reduced special fibre},} we may assume that $\Xcal_s$ and hence $\Vcal_s$ are reduced.

We compare the sheaves $\Lcal_\Vcal$ and $\Lcal$ using the generic fibre $L$ as a reference, i.e. for any formal open subset $\Ucal$ of $\Vcal$ (resp. $\Xcal$) with generic fibre $U$, we view $\Lcal_\Vcal(\Ucal)$  (resp. $\Lcal(\Ucal)$) as a subset of $L(U)$. Using compactness of $V$ and replacing $\metr_\Lcal$ by a suitable  
 {multiple with a small number in $|K^\times|$ (which is dense in $\R_+$ by our assumptions on $K$),} we may assume that $\metr_\Lcal \leq \metr_V$ on $L|_V$. By Lemma \ref{formal metric and model}, we deduce that $\Lcal_\Vcal$ is a coherent submodule of $\Lcal$. Similarly as in the proof of \cite[Lemma 5.7]{bosch-luetkebohmert-1}, we can extend $\Lcal_\Vcal$ to a coherent submodule $\Ncal$ of $\Lcal$. Since $\Lcal|_\Vcal$ is coherent and $V$ is compact, there is a sufficiently small $\pi \in \kcirc \setminus \{0\}$ with $\pi \Lcal|_\Vcal$ a submodule of $\Lcal_\Vcal$. Then the generic fibre of the coherent submodule $\Mcal:=\Ncal +\pi \Lcal$ of $\Lcal$ is $L$ and $\Mcal$ agrees with $\Lcal_\Vcal$ on $\Vcal$. Using the flattening techniques from \cite[Thm. 4.1, Prop. 4.2]{bosch-luetkebohmert-2}, there is an admissible formal blowing up $\Xcal'$ of $\Xcal$ with center outside $\Vcal$ such that the strict transform $\Mcal'$ of $\Mcal$ is flat over $\Xcal'$. We conclude that $\Mcal'$ is a line bundle on $\Xcal'$ which agrees with $\Lcal_\Vcal$ over $\Vcal$. Since $\Mcal'|_X=L$, 
 {this proves (ii).}
\end{proof}

The following result shows that local analytic considerations for formal metrics on an algebraic variety can be always done with  algebraic metrics.

\begin{cor} \label{algebraic vs locally formal}
Let $L$ be a  line bundle on a proper variety $X$ over $K$. Suppose that $\metr$ is a formal metric on $L^\an|_V$ for a compact strictly $K$-analytic domain $V$ of $\Xan$. Then there is an algebraic metric $\metr'$ on $L$ which agrees with $\metr$ over $V$. 
\end{cor}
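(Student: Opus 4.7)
The plan is to extend the formal metric $\metr$ from the compact strictly $K$-analytic domain $V$ to all of $\Xan$ using Proposition \ref{extension of PL-metrics}, and then to identify the resulting formal metric on the proper variety $X$ with an algebraic one via the comparison cited in Remark \ref{properties of algebraic metrics}.

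More precisely, since $X$ is a proper variety over $K$, its analytification $\Xan$ is a compact strictly $K$-analytic space, and it is reduced since $X$ is integral. By the discussion at the end of \ref{non-reduced analytic space}, a formal metric on the reduced space $V$ is the same as a piecewise linear metric on $V$. Thus I would first apply Proposition \ref{extension of PL-metrics}(i) with $X$ replaced by $\Xan$ and $V$ the given strictly $K$-analytic domain: this yields a piecewise linear metric $\metr''$ on $L^\an$ which restricts to $\metr$ on $V$. Since $\Xan$ is reduced, $\metr''$ is in fact a formal metric on $L^\an$.

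Finally, $X$ is proper over $K$, so by Remark \ref{properties of algebraic metrics} (which invokes \cite[Prop.~8.13]{gubler-kuennemann}), the notions of formal and algebraic metric on a line bundle over $\Xan$ coincide. Hence $\metr'':=\metr'$ is an algebraic metric on $L$ with $\metr'|_V=\metr$, which is what was required.

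I do not expect a serious obstacle here: the content is essentially a combination of the already-established extension statement (Proposition \ref{extension of PL-metrics}) with the formal = algebraic comparison on proper varieties. The only minor subtlety worth double-checking is that the reduction step from a formal metric on the reduced domain $V$ to a piecewise linear metric (so that Proposition \ref{extension of PL-metrics} applies in the form stated) and back (after extension, using that $\Xan$ is reduced) is symmetric and involves no loss, which is guaranteed by the bijection recalled at the end of \ref{non-reduced analytic space}.
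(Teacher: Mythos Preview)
Your proof is correct and follows essentially the same route as the paper: extend the metric from $V$ to all of $\Xan$ via Proposition \ref{extension of PL-metrics}, then invoke Remark \ref{properties of algebraic metrics} to conclude that the resulting formal metric is algebraic. You have simply spelled out the passage between formal and piecewise linear metrics on reduced spaces that the paper leaves implicit; there is one harmless typo (you write $\metr'':=\metr'$ where you mean $\metr':=\metr''$).
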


\begin{proof}
By Proposition \ref{extension of PL-metrics}, we can extend  $\metr$  to a formal metric $\metr'$ on $L$. By Remark \ref{properties of algebraic metrics}, this is an algebraic metric.
\end{proof}

\section{Semipositive piecewise linear metrics}\label{semip}

In this section, we start with a line bundle $L$ on a strictly $K$-analytic space $X$ over $K$ endowed with a piecewise linear metric $\metr$. 
We will introduce semipositive piecewise linear metrics 
from a point of view which is local on $X$. 
Assuming that $X$ is the analytification of 
a proper algebraic variety, 
we have seen in the previous section that piecewise linear, 
formal and algebraic metrics are the same. 
In this case, we will show that a formal metric is semipositive 
in all points of $X$ if and only if an associated model is vertically nef,  which is Zhang's definition used in arithmetic intersection theory. Then we show that semipositivity for formal metrics agrees with various other positivity notions introduced before. 

\begin{art} \label{extended definition of piecewise linear}
First, we generalize the definitions from Section \ref{forma} to our setting. 
Let $L$ be a line bundle on the strictly $K$-analytic space $X$ over $K$ which means that $L$ is a locally free sheaf of rank $1$ on the $\rm G$-topology of $X$. 
We say that a metric $\metr$ on $L$ is {\it piecewise linear} if there is $\rm G$-open covering which has frames of norm identically one. It is easy to see that piecewise linearity is closed with respect to the following operations: tensor product of metrics, passing to the dual metric and pull-back of metrics.
\end{art}

We have seen in \ref{non-reduced analytic space} that on a reduced compact strictly $K$-analytic space, a metric is piecewise linear if and only if it is formal. 
This will be used for the following local definition of semipositivity  which was suggested to us by Tony Yue Yu.

\begin{definition} \label{semipositive formal metric in x}
Let $\metr$ be a piecewise linear metric on the line bundle $L$ of the strictly $K$-analytic space $X$. 
If $X$ is reduced, then  $\metr$ is called {\it semipositive in $x \in X$} if $x$ has a  neighbourhood $V$ in $X$ with the following properties:
\begin{itemize}
\item[(i)] $V$ is a compact strictly $K$-analytic domain;
\item[(ii)]  $(V,\Lan|_V)$ has a formal $\kcirc$-model $(\Vcal, \Lcal)$ with $\metr_\Lcal = \metr$;
\item[(iii)] If $Y$ is a closed curve in $\Vcal_s$ with $Y$ proper over $\ktilde$, then $\deg_{\Lcal}(Y) \geq 0$.
\end{itemize}
If $X$ is not necessarily reduced, then $\metr$ is called {\it semipositive in $x$} if the induced metric on $L|_{X_{\rm red}}$ is semipositive in the above sense. 
We call $\metr$ {\it semipositive} on an open subset $W$ of $X$ if $\metr$ is semipositive in all points of $W$.  
\end{definition}

\begin{lem} \label{semipositivity and local model}
Suppose that $X$ is reduced and that $\metr$ is semipositive in $x \in X$. Let $W \subseteq V$  be compact strictly $K$-analytic domains in $X$ such that $W$ is a neighbourhood of $x$.  Suppose that $(\Vcal, \Lcal)$ (resp. $(\Wcal,\Mcal)$) is a formal $\kcirc$-model of $(V,\Lan|_V)$ (resp. $(W,\Lan|_W)$). If $(\Vcal,\Lcal)$ satisfies (ii) and (iii) in Definition \ref{semipositive formal metric in x} and if $(\Wcal,\Mcal)$ satisfies (ii), then $(\Wcal,\Mcal)$ also satisfies  (iii). 
 \end{lem}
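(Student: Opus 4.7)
The strategy is to compare the two models $(\Vcal,\Lcal)$ and $(\Wcal,\Mcal)$ via a common refinement. By Raynaud's theorem, in the form used in the proof of Proposition \ref{extension of PL-metrics}(ii), I first construct a formal $\kcirc$-model $(\Xcal,\Mcal')$ of $(V,L|_V)$ containing a formal open subscheme $\Wcal^\flat\subseteq\Xcal$ which is a formal model of $W$, together with a morphism $\psi\colon\Wcal^\flat\to\Wcal$ extending $\id_W$ and satisfying $\psi^*\Mcal=\Mcal'|_{\Wcal^\flat}$. A second application of Raynaud's theorem to the two models $\Xcal$ and $\Vcal$ of $V$ produces an admissible formal blowing up $\varphi\colon\Xcal'\to\Xcal$ equipped with a morphism $\pi\colon\Xcal'\to\Vcal$ extending $\id_V$. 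Setting $\Wcal^\ast$ equal to the preimage of $\Wcal^\flat$ in $\Xcal'$, and then applying Remark \ref{reduced special fibre} to arrange that $\Wcal^\ast_s$ is reduced, I obtain a formal model of $W$ together with morphisms $\alpha\colon\Wcal^\ast\to\Wcal$ and $\beta\colon\Wcal^\ast\to\Vcal$ extending $\id_W$ and the inclusion $W\hookrightarrow V$, respectively. By \eqref{line bundle and pull-back metric}, both $\alpha^*\Mcal$ and $\beta^*\Lcal$ are line bundles on $\Wcal^\ast$ whose associated formal metric equals $\metr|_W$, so the uniqueness statement in Lemma \ref{formal metric and model} provides a canonical isomorphism $\alpha^*\Mcal\cong\beta^*\Lcal$.

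Next, let $Y\subseteq\Wcal_s$ be a closed curve that is proper over $\ktilde$. The induced map $\alpha_s\colon\Wcal^\ast_s\to\Wcal_s$ is proper and surjective, being the composite of an admissible formal blowing up with the finite surjection from Remark \ref{reduced special fibre}. Hence there exists a $1$-cycle $Y^\ast$ supported in $\alpha_s^{-1}(Y)$ with $(\alpha_s)_*Y^\ast=d\cdot Y$ for some positive integer $d$. The cycle $Y^\ast$ is proper over $\ktilde$, so its image $(\beta_s)_*Y^\ast$ is an effective $1$-cycle in $\Vcal_s$ whose support is proper over $\ktilde$. Combining the projection formula for $\alpha$ and for $\beta$ with the identification $\alpha^*\Mcal\cong\beta^*\Lcal$ yields
\[
d\cdot\deg_\Mcal(Y)=\deg_{\alpha^*\Mcal}(Y^\ast)=\deg_{\beta^*\Lcal}(Y^\ast)=\deg_\Lcal\bigl((\beta_s)_*Y^\ast\bigr),
\]
and the right-hand side is non-negative by hypothesis (iii) for $(\Vcal,\Lcal)$. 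Dividing by $d$ gives $\deg_\Mcal(Y)\geq 0$, as desired.

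The principal technical step is the construction of $\Wcal^\ast$ in the first paragraph: producing a single formal model that simultaneously dominates $\Wcal$ and carries a morphism to $\Vcal$ extending the domain inclusion $W\hookrightarrow V$. This requires two invocations of Raynaud's theorem and the careful interplay between admissible formal blowing ups and formal open subschemes that already underlies the proof of Proposition \ref{extension of PL-metrics}(ii); the additional reduction to a reduced special fibre via Remark \ref{reduced special fibre} is what permits Lemma \ref{formal metric and model} to identify the two pull-back line bundles. Once this is in place, the rest is a routine bookkeeping exercise with the projection formula.
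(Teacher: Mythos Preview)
Your proof is correct and follows essentially the same strategy as the paper: dominate the models via Raynaud's theorem, identify the line bundles on a model with reduced special fibre via Lemma \ref{formal metric and model}, and transfer the degree inequality by the projection formula. The paper's execution is slightly more direct: rather than constructing an auxiliary $\Wcal^\ast$ mapping to both $\Wcal$ and $\Vcal$, it invokes \cite[Cor.~5.4]{bosch-luetkebohmert-2} to arrange, after passing to dominating models, that $\Wcal$ is a formal open subscheme of $\Vcal$ with $\Lcal|_\Wcal \cong \Mcal$, so that a proper curve in $\Wcal_s$ is already one in $\Vcal_s$ and no push--pull with $\alpha,\beta$ is needed.
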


\begin{proof}
We first note that we may always replace the formal $\kcirc$-models $\Vcal$ (resp. $\Wcal$) by dominating formal $\kcirc$-models $\Vcal'$ of $V$ (resp. $\Wcal'$ of $W$) as property (iii) is equivalent under such a change. This follows from the fact that any curve of $\Vcal_s$  is dominated by a curve $\Vcal_s'$ with respect to the proper morphism $\Vcal_s' \to \Vcal_s$ \cite[Cor. 4.4]{temkin-2000} and from projection formula. 
 In this way, Raynaud's theorem and \cite[Cor. 5.4]{bosch-luetkebohmert-2} show that we may assume that the $\rm G$-open immersion $W \to V$  extends to an open immersion $\Wcal \to \Vcal$. By Lemma \ref{formal metric and model}, we may assume that $\Vcal_s$ and hence $\Wcal_s$ are reduced, and then  $\Lcal|_\Wcal \cong \Mcal$ again by Lemma \ref{formal metric and model}. Therefore property (iii) for $(\Vcal,\Lcal)$ implies the same property for $(\Wcal,\Mcal)$.
\end{proof}

\begin{prop} \label{properties of semipositive metrics} 
Let $\metr$ be a piecewise linear metric on the line bundle $L$ on the strictly $K$-analytic space $X$ and let $x \in X$. 
\begin{itemize}
 \item[(a)] The set of points in $X$ where $\metr$ is semipositive is open.
 \item[(b)] The trivial metric on $O_X$ is semipositive on $X$.
 \item[(c)] The tensor product of two piecewise linear metrics which are semipositive in $x$ is again semipositive in $x$.
 \item[(d)] Let $f:X' \to X$ be a morphism of strictly $K$-analytic spaces  and let $x' \in X'$ with $x=f(x')$. If $\metr$ is semipositive in $x$, then $\metr':=f^*\metr$ is semipositive in $x'$.
\end{itemize}
\end{prop}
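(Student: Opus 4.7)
The plan is to use Lemma \ref{semipositivity and local model}: once semipositivity at $x$ is witnessed by some model $(V,\Vcal,\Lcal)$, every formal model of $L$ on a smaller compact strictly analytic neighbourhood of $x$ automatically satisfies the curve-degree condition (iii). In all four parts one may reduce to the case where $X$ is reduced, since the definition only involves $X_{\rm red}$.

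For (a), if $(V,\Vcal,\Lcal)$ witnesses semipositivity at $x$, then $V$ contains an open neighbourhood $U$ of $x$ and is thus a neighbourhood of every $x'\in U$. Any compact strictly analytic neighbourhood $W\subseteq V$ of such an $x'$ admits a formal model $(\Wcal,\Mcal)$ of $(W,L|_W)$ with $\metr_\Mcal=\metr|_W$ (since $\metr|_W$ is formal), and Lemma \ref{semipositivity and local model} supplies (iii) for $(\Wcal,\Mcal)$, witnessing semipositivity at $x'$. For (b), any strictly affinoid neighbourhood $V$ of $x$ with canonical formal model $\Vcal$ and $\Lcal=\Ocal_\Vcal$ trivially satisfies (iii) since $\deg_{\Ocal_\Vcal}(Y)=0$ for every closed curve $Y$. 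For (c), I take witnesses $(V_i,\Vcal_i,\Lcal_i)$ for the semipositivity of $\metr_i$ at $x$ ($i=1,2$) and pick a compact strictly analytic neighbourhood $W$ of $x$ inside $V_1\cap V_2$. Using Raynaud's theorem I construct a single formal model $\Wcal$ of $W$ carrying line bundles $\Mcal_i$ such that $\metr_{\Mcal_i}=\metr_i|_W$, and Lemma \ref{semipositivity and local model} applied to each $(\Wcal,\Mcal_i)$ gives (iii) for both factors separately; additivity of the degree under tensor product then yields $\deg_{\Mcal_1\otimes\Mcal_2}(Y)=\deg_{\Mcal_1}(Y)+\deg_{\Mcal_2}(Y)\geq 0$ for every closed curve $Y\subseteq\Wcal_s$ proper over $\ktilde$, which proves semipositivity of $\metr_1\otimes\metr_2$ at $x$.

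For (d), starting from a witness $(V,\Vcal,\Lcal)$ of $\metr$ at $x=f(x')$, I choose a compact strictly analytic neighbourhood $V'$ of $x'$ inside $f^{-1}(V)$. By Raynaud's theorem, after possibly replacing $\Vcal$ by an admissible formal blowing-up, I obtain a formal model $\Vcal'$ of $V'$ together with a morphism $\varphi:\Vcal'\to\Vcal$ extending the restriction of $f$; then $(\Vcal',\varphi^*\Lcal)$ is a formal model of $(V',f^*L|_{V'})$ with metric $f^*\metr|_{V'}$ by \eqref{line bundle and pull-back metric}. For a closed curve $Y'\subseteq\Vcal'_s$ proper over $\ktilde$, the composition $Y'\to\Vcal_s$ is proper (since $\Vcal_s$ is separated over $\ktilde$), so its image $\varphi_s(Y')$ is closed in $\Vcal_s$ and proper over $\ktilde$. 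Either $\varphi_s$ contracts $Y'$ to a point, forcing $\deg_{\varphi^*\Lcal}(Y')=0$, or $\varphi_s(Y')_{\rm red}=:Y$ is a closed curve in $\Vcal_s$ proper over $\ktilde$; in the second case the projection formula combined with property (iii) for $(\Vcal,\Lcal)$ gives $\deg_{\varphi^*\Lcal}(Y')=\deg(Y'/Y)\cdot\deg_\Lcal(Y)\geq 0$.

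The most delicate step I anticipate is the properness argument at the end of (d): one must ensure that the image of the proper curve $Y'$ in $\Vcal_s$ is a closed subscheme proper over $\ktilde$, so that property (iii) of the original witness can be invoked, and that $Y'\to Y$ is generically finite onto its image when that image is a curve, so the usual projection formula applies.
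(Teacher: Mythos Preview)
Your proof is correct and follows essentially the same approach as the paper: reduce to the reduced case, use Lemma~\ref{semipositivity and local model} for (c), and Raynaud's theorem plus the projection formula for (d). The only notable difference is that your argument for (a) is more elaborate than needed: since $V$ is already a neighbourhood of every $x'$ in its interior, the \emph{same} triple $(V,\Vcal,\Lcal)$ directly witnesses semipositivity at each such $x'$, so there is no need to pass to a smaller $W$ and invoke Lemma~\ref{semipositivity and local model}; the paper simply says (a) is obvious from the definition.
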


\begin{proof} 
We may assume that $X,X'$ are reduced. 
Properties (a) and (b) are obvious from Definition \ref{semipositive formal metric in x}. Lemma \ref{semipositivity and local model} and linearity of the degree of a proper curve with respect to the divisor shows (c). 

For (d), we choose $V$, $\Vcal$ and $\Lcal$ as in Definition \ref{semipositive formal metric in x}. Then 
there is a  compact $\rm G$-open neighbourhood $W$ of $x'$ in $(X')^{\rm an}$ with $f(W) \subseteq V$. By Raynaud's theorem, the  morphism $f:W \to V$ extends to a morphism $\varphi:\Wcal \to \Vcal$ of formal $\kcirc$-models. Let $Y$ be a closed curve in $\Wcal_s$ which is proper over $\ktilde$. Then the restriction of $\varphi$ to $Y$ is proper.  By \eqref{line bundle and pull-back metric}, we have $\metr'=\metr_{\varphi^*\Lcal}$, and (d) follows from projection formula. 
\end{proof}

In the following,  $L$ is a line bundle on  {a} proper algebraic variety $X$ over $K$.

\begin{prop} \label{local to global for semipositive}
We assume that $\metr$ is the formal metric associated to  {a} formal $\kcirc$-model $(\Xcal,\Lcal)$ of $(X,L)$ and we denote by $\pi:\Xan \to \Xcal_s$ the reduction map. Then $\metr$ is semipositive on the open subset $W$ of $\Xan$ if and only if $\deg_{\Lcal}(Y) \geq 0$ for any closed curve $Y$ in $\Xcal_s$ with $Y \subseteq \pi(W)$. 
\end{prop}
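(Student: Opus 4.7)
I prove the two implications of the proposition separately.

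\textbf{Backward direction.} Given the degree hypothesis, fix $x \in W$; we construct a semipositivity neighborhood of $x$. Since $W$ is open, choose a compact strictly $K$-analytic neighborhood $V$ of $x$ with $V \subseteq W$. By Raynaud's theorem (recalled in \ref{Raynaud's theorem}), $V$ is the generic fibre of a formal affine open $\Ucal$ of some admissible formal blowing-up $\varphi : \Xcal' \to \Xcal$, and $(\Ucal, \varphi^*\Lcal|_\Ucal)$ is a formal model of $(V, L^{\rm an}|_V)$ whose associated metric coincides with $\metr|_V$ by \eqref{line bundle and pull-back metric}. To verify condition (iii) of Definition \ref{semipositive formal metric in x}, let $Y'$ be a closed curve in $\Ucal_s$ proper over $\ktilde$; surjectivity of the reduction map $V \to \Ucal_s$ combined with $V \subseteq W$ shows that the image $\varphi(Y') \subseteq \Xcal_s$ is contained in $\pi(W)$. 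If $\varphi|_{Y'}$ is constant then $\deg_{\varphi^*\Lcal}(Y') = 0$; otherwise $\varphi(Y')$ is a closed curve in $\Xcal_s$ lying in $\pi(W)$, and the projection formula together with the hypothesis yields
$\deg_{\varphi^*\Lcal}(Y') = \deg(Y'/\varphi(Y')) \cdot \deg_\Lcal(\varphi(Y')) \geq 0$.

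\textbf{Forward direction, reduction to a curve.} Assume $\metr$ is semipositive on $W$ and let $Y$ be a closed curve in $\Xcal_s$ with $Y \subseteq \pi(W)$; by additivity of degrees we may assume $Y$ is irreducible with generic point $\eta_Y$. The key step is to apply Theorem \ref{liftvarieties}, which produces an integral closed algebraic curve $Z \subseteq X$ such that $Y$ is an irreducible component of the special fibre of the schematic closure $\overline Z \subseteq \Xcal$. Then $(\overline Z, \Lcal|_{\overline Z})$ is a formal $\kcirc$-model of $(Z, L|_Z)$ inducing the restricted metric $\metr|_{\Zan}$, which by Proposition \ref{properties of semipositive metrics}(d) is semipositive on $W \cap \Zan$. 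Replacing $\overline Z$ by the induced model with reduced special fibre (see Remark \ref{reduced special fibre}) ensures that the preimage of $\eta_Y$ under the reduction map $\pi_{\overline Z} : \Zan \to \overline Z_s$ consists of a unique Shilov point $x_Y$; since $\eta_Y \in \pi(W)$, we have $x_Y \in W \cap \Zan$.

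\textbf{Curve case and main obstacle.} It remains to prove the following one-dimensional statement: if $\metr$ is a formal metric on a proper algebraic curve $Z$ over $K$ with formal model $(\overline Z, \Lcal)$, and $\metr$ is semipositive at the Shilov point $x_Y$ associated to an irreducible component $Y \subseteq \overline Z_s$, then $\deg_\Lcal(Y) \geq 0$. The main obstacle is that Definition \ref{semipositive formal metric in x} only guarantees \emph{some} neighborhood $V$ of $x_Y$ with a model satisfying (iii), whereas small neighborhoods of $x_Y$ have formal models with affine special fibre (hence no proper subcurves, making (iii) vacuous); to extract information about $Y$ we need a formal model whose special fibre contains $Y$ as a proper closed subset. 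The resolution uses that semipositivity of $\metr$ holds at \emph{every} point of $W \cap \Zan$: for each closed point $y$ of $Y$, the assumption $y \in \pi(W)$ provides a preimage in $W$, whose semipositivity neighborhood restricts (via Lemma \ref{semipositivity and local model}, applied to a common dominating formal model constructed by Raynaud's theorem) to give nef-degree data on the corresponding open disk. Patching these local contributions along a cover of an open neighborhood of $Y$ in $\overline Z_s$ by formal opens of a suitable admissible blowing-up $\overline Z' \to \overline Z$, and applying the projection formula to pass from the strict transform $\tilde Y \subseteq \overline Z'_s$ back to $Y$, yields $\deg_\Lcal(Y) = \deg_{\varphi^*\Lcal}(\tilde Y) \geq 0$.
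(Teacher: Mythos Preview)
Your backward direction is fine and essentially matches the paper's argument (the paper arranges $\Vcal$ to be a formal open of $\Xcal$ directly via \cite[Cor.~5.4]{bosch-luetkebohmert-2}, whereas you pass through a blowing-up, but this is harmless).

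The forward direction, however, has a genuine gap in the ``curve case''. Your patching proposal is too vague to be correct: semipositivity at a point $x'$ with $\pi(x')$ a \emph{closed} point $y\in Y$ gives you a neighborhood whose formal model need not contain $Y$ (or any proper curve) in its special fibre, so condition~(iii) yields no information there; and degree on a proper curve is a global invariant that does not decompose additively along a Zariski-open cover, so ``patching local nef-degree data'' has no clear meaning. Moreover, your detour through Theorem~\ref{liftvarieties} is both unnecessary and problematic as stated, since that theorem is about algebraic $F^\circ$-schemes while $\Xcal$ here is a formal model.

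The paper's argument is direct and avoids all of this. The point you are missing is that your ``main obstacle'' is illusory: if $V$ is a compact strictly $K$-analytic \emph{neighbourhood} of the point $x$ with $\pi(x)=\eta_Y$ (so $x$ lies in the interior of $V$, since $\Xan$ is boundaryless), and $\Vcal$ is arranged to be a formal open subset of $\Xcal$ via Raynaud's theorem and \cite[Cor.~5.4]{bosch-luetkebohmert-2}, then the closure of $\pi(x)=\eta_Y$ in $\Vcal_s$ is \emph{proper} over $\tilde K$ by \cite[Lemme~6.5.1]{chambert-loir-ducros}. Since this closure is contained in the irreducible curve $Y$ and contains its generic point, it equals $Y$; thus $Y\subseteq\Vcal_s$ is a proper curve and condition~(iii) of Definition~\ref{semipositive formal metric in x} applied to the single model $(\Vcal,\Lcal|_\Vcal)$ gives $\deg_\Lcal(Y)\geq 0$ immediately. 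No reduction to curves, no lifting theorem, and no patching is needed.
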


\begin{proof}
We note first that on the right hand side of the equivalence we may always pass to a formal $\kcirc$-model $\Xcal'$ dominating $\Xcal$ using that any curve in $\Xcal_s$  is dominated by a curve in $\Xcal_s'$ with respect to the proper morphism $\Xcal_s' \to \Xcal_s$  and using the projection formula.

Suppose that $\metr$ is semipositive on $W$ and let $Y$ be a closed curve in $\Xcal_s$ with $Y \subseteq \pi(W)$. There is $x \in W$ with $\pi(x)$ equal to the generic point of $Y$. Since $\metr$ is semipositive in $x$, there is a neighbourhood $V$ of $x$ in $\Xan$ and a formal $\kcirc$-model $(\Vcal,\Lcal)$ of $(V,\Lan|_V)$ with properties (i)--(iii) from Definition \ref{semipositive formal metric in x}.  Using Raynaud's theorem and \cite[Cor. 5.4]{bosch-luetkebohmert-2}, we may assume that $\Vcal$ is an open subset of $\Xcal$. 
Since 
 {$V$ is a neighborhood of $x$ in $\Xan$} and since $\Xan$ is boundaryless \cite[Thm. 3.4.1]{berkovich-book}, we deduce that $x$ is not a boundary point of $V$ and hence the closure of $\pi(x)$ in $\Vcal_s$ is proper (see \cite[Lemme 6.5.1]{chambert-loir-ducros}). We conclude that this closure is $Y$ and hence $\deg_\Lcal(Y) \geq 0$ by (iii). 

Conversely, assume that $\deg_{\Lcal}(Y) \geq 0$ for any closed curve $Y$ in $\Xcal_s$ with $Y \subseteq \pi(W)$. For $x \in W$, we choose a neighbourhood $V$ of $x$ in $W$ such that $V$ is a compact strictly $K$-analytic domain. By  Raynaud's theorem and \cite[Cor. 5.4]{bosch-luetkebohmert-2}, we may assume that $V$ has a $\kcirc$-model $\Vcal$ which is a formal  open subset of $\Xcal$. Then (iii) in Definition \ref{semipositive formal metric in x} follows from our assumption on the degree of curves since $\Vcal_s \subseteq \pi(W)$. This proves semipositivity of $\metr$ in $x$. 
\end{proof}

\begin{rem} \label{semipositive and rational point}
It follows that any formal metric $\metr$ is semipositive in all $K$-rational points of $X$. Indeed, using the notation from Proposition \ref{local to global for semipositive}, we note that that the reduction $\pi(x)$ is a closed point of the special fibre $\Xcal_s$. By anticontinuity of the reduction map $\pi$, we get an open neighbourhood $W:=\pi^{-1}(\pi(x))$ of $x$ in $\Xan$ for which no closed curve of $\Xcal_s$ is contained in $\pi(W)$. Then Proposition \ref{local to global for semipositive} proves semipositivity of $\metr$ on $W$.
 \end{rem}

\begin{art} \label{CL measure}
Let $(L,\metr)$ be a formally metrized line bundle on  {an} $n$-dimensional proper variety $X$ over $K$ and let $W$ be an open subset of $\Xan$. 
Then the $\delta$-form $c_1(L,\metr)^n$ of type $(n,n)$ induces a unique Radon measure on $W$ extending the current $[c_1(L,\metr)^n]_D$ (see \cite[Cor. 6.15]{gubler-kuennemann}). 
It is shown in \cite[Thm. 10.5]{gubler-kuennemann} that this Monge--Amp\`ere measure on $\Xan$ agrees with the corresponding Chambert--Loir measure in arithmetic geometry and hence it is supported in finitely many points. Note that the restriction of this measure to $W$ is the unique Radon measure  on $W$ extending the current $[c_1(L|_W,\metr)^n]_D$.  
\end{art}

\begin{lem} \label{curveslemma}  \label{formal and psh on a curvelemma}
Let $L$ be a line bundle on a proper curve $C$ over $K$ endowed with a formal metric $\metr$ and let $W$ be an open subset of ${C^{\rm an}}$. 
Then 
 $\metr$ is a semipositive formal metric over $W$ if and only if $c_1(L|_W,\metr)$ induces a positive measure on $W$.
\end{lem}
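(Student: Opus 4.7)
The plan is to combine Proposition \ref{local to global for semipositive} with the explicit Chambert--Loir description of the Monge--Amp\`ere measure on a curve. By Lemma \ref{formal metric and model} I fix a formal $\kcirc$-model $(\Ccal,\Lcal)$ of $(C,L)$ with $\metr=\metr_\Lcal$ and $\Ccal_s$ reduced. Denote by $Y_1,\ldots,Y_k$ the irreducible components of $\Ccal_s$, by $\eta_i$ their generic points, and let $\xi_i\in C^{\rm an}$ be the unique point with $\pi(\xi_i)=\eta_i$; uniqueness here uses that $C$ is one-dimensional, so each $Y_i$ is a one-dimensional component.

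By \ref{CL measure} together with \cite[Thm.~10.5]{gubler-kuennemann}, the Radon measure associated to $c_1(L,\metr)$ is the Chambert--Loir measure $\sum_{i=1}^{k}\deg_\Lcal(Y_i)\,\delta_{\xi_i}$. Restricting to $W$ gives $c_1(L|_W,\metr)=\sum_{\xi_i\in W}\deg_\Lcal(Y_i)\,\delta_{\xi_i}$, which is a positive measure if and only if $\deg_\Lcal(Y_i)\geq 0$ for every index $i$ with $\xi_i\in W$. On the other hand, Proposition \ref{local to global for semipositive} states that $\metr$ is semipositive on $W$ if and only if $\deg_\Lcal(Y_i)\geq 0$ for every $i$ with $Y_i\subseteq\pi(W)$.

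The proof therefore reduces to the equivalence $\xi_i\in W \Longleftrightarrow Y_i\subseteq \pi(W)$ for each component $Y_i$. The implication $\Longleftarrow$ is immediate: $Y_i\subseteq\pi(W)$ gives $\eta_i\in\pi(W)$, so some $x\in W$ satisfies $\pi(x)=\eta_i$, and uniqueness of the preimage forces $x=\xi_i$. For the converse, suppose $\xi_i\in W$ and let $y$ be any closed point of $Y_i$. The preimage $\pi^{-1}(y)$ is a non-empty open subset of $C^{\rm an}$ by anticontinuity of $\pi$, and by the local description of a Berkovich analytic curve near a type-2 point, $\xi_i$ lies in the closure of the residue disk or tube $\pi^{-1}(y)$ attached to $Y_i$ at $y$. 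Hence the open neighborhood $W$ of $\xi_i$ meets $\pi^{-1}(y)$, yielding $y\in\pi(W)$; combined with $\eta_i=\pi(\xi_i)\in\pi(W)$, this gives $Y_i\subseteq\pi(W)$.

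The main obstacle is this last topological step: that every open neighborhood of the Shilov point $\xi_i$ meets each residue disk or tube $\pi^{-1}(y)$ attached to the component $Y_i$. I would justify this via the standard local structure of Berkovich analytic curves at type-2 points (see, for example, \cite{berkovich-book}); alternatively, after passing to a semistable formal model by a suitable admissible blowup one can exhibit the star--shape of neighborhoods of vertices of the skeleton directly, which makes the claim immediate.
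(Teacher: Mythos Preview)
Your overall structure matches the paper's proof exactly: fix a model with reduced special fibre, invoke the Chambert--Loir formula $c_1(L,\metr)=\sum_Y\deg_\Lcal(Y)\,\delta_{\xi_Y}$, apply Proposition~\ref{local to global for semipositive}, and reduce to the equivalence $\xi_Y\in W\Longleftrightarrow Y\subseteq\pi(W)$. The direction $\Longleftarrow$ is handled identically.

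The difference lies in the forward implication $\xi_Y\in W\Rightarrow Y\subseteq\pi(W)$. The paper avoids any appeal to the fine structure of Berkovich curves: it takes a compact strictly $K$-analytic neighbourhood $V\subseteq W$ of $\xi_Y$ with a formal model $\Vcal$ mapping to $\Xcal$, observes that $\xi_Y$ is an inner point of $V$, and invokes \cite[Lemme~6.5.1]{chambert-loir-ducros} to conclude that the closure $Y'$ of the reduction of $\xi_Y$ in $\Vcal_s$ is proper over $\ktilde$. Then $\iota(Y')\subseteq\pi(V)\subseteq\pi(W)$ is a proper curve containing the generic point of $Y$, so $Y=\iota(Y')$. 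This argument is short, works for an arbitrary reduced model, and uses only a properness criterion already cited elsewhere in the paper.

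Your route via ``$\xi_i$ lies in the closure of each formal fibre $\pi^{-1}(y)$'' is correct in substance, but as you yourself note, it leans on structure theory of analytic curves that is not entirely trivial to pin down for a non-semistable reduced model. Your fallback of passing to a semistable model by admissible blow-up would work, but then you must track how the components, the points $\xi_i$, and the condition $Y_i\subseteq\pi(W)$ transform under the blow-up morphism on special fibres---extra bookkeeping that the paper's properness argument sidesteps entirely. The paper's approach buys you a self-contained proof with one clean citation; yours buys a more geometric picture but at the cost of either a nontrivial reference chase or an extra reduction step.
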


\begin{proof}
Let $\KL$ be a formal model of $L$ over a proper formal model $\Xcal$
of $C$ which induces the given metric and let $\pi:{C^{\rm an}} \to \Xcal_s$ be the reduction map.
By  \ref{reduced special fibre}, we may assume $\Xcal_s$ reduced. 
The equality of the Monge--Amp\`ere measure and the Chambert-Loir measure \cite[Thm. 10.5]{gubler-kuennemann}
gives the formula
\begin{equation} \label{CL measure for curves}
c_1(L,\metr)=\sum_Y\deg (\KL|_Y)\cdot \delta_{\xi_Y}
\end{equation}
where $Y$ runs over the irreducible components of the special fibre
of $\Xcal$ and $\xi_Y$ denotes the corresponding point in the
Berkovich space $C^\an$ with reduction equal to the generic point of $Y$.
In view of \eqref{CL measure for curves} and using Proposition \ref{local to global for semipositive}, the lemma follows from the claim that $\xi_Y \in W$ if and only if $Y \subseteq \pi(W)$. 

To see the equivalence, let $\xi_Y \in W$. Then $\xi_Y$ has a compact  strictly $K$-analytic domain $V \subseteq W$ as a neighbourhood with a formal $\kcirc$-model $\Vcal$ of $V$. By Raynaud's theorem, we may assume that the inclusion $V \to {C^{\rm an}}$ extends to a morphism $\iota:\Vcal \to \Xcal$. 
Since $\xi_Y$ is an inner point of $V$, the closure $Y'$ of the reduction of $\xi_Y$ in $\Vcal$ is proper over $\ktilde$ \cite[Lemme 6.5.1]{chambert-loir-ducros} and hence $\iota(Y')$ is a proper curve over $\ktilde$. By functoriality of the reduction map, we have $\pi(\xi_Y) \in \iota(Y') \subseteq \pi(V) \subseteq \pi(W)$.  Since $\pi(\xi_Y)$ is dense in $Y$ and  $\iota(Y')$ is proper, we get $ Y=\iota(Y') \subseteq \pi(W)$. 

Conversely, if $Y \subseteq W$, then there is $x \in W$ with $\pi(x)$ equal to the generic point of $Y$. By the characterization of $\xi_Y$, we get $\xi_Y=x \in W$.
\end{proof}

We recall that a line bundle is called semiample if a 
strictly positive tensor-power is generated by global sections. 
Note also that a formal metric $\metr$ on $L$ has a canonical 
first Chern $\delta$-form $c_1(L,\metr) \in B^{1,1}(\Xan)$ 
\cite[Rem. 9.16]{gubler-kuennemann}. 

\begin{lem} \label{local ample Modellmetrik ist positiv repraesentierbar}
Let $V$ be a compact strictly $K$-analytic neighbourhood of $x$ in $\Xan$. Suppose that the formal metric $\metr$ on $L|_V$ is induced by a formal model $(\Vcal, {\Lcal_V})$ of $(V,\Lan|_V)$ and let $Y$ be the closure of the reduction of $x$ in $\Vcal_s$.  If the restriction of $ {\Lcal_V}$ to $Y$ is semiample,  then the first Chern $\delta$-form $c_1(L,\metr)$ is positively representable in the sense of \ref{positive delta-forms on W} on an open neighbourhood of $x$.
\end{lem}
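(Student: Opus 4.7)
The plan is to use the sections supplied by semiampleness to realise the formal metric $\metr$, locally around $x$, as the pullback of the canonical formal metric on $O_{\P^N}(1)$, whose first Chern form is positively representable by direct computation on a torus chart.

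After replacing $\metr$ and $\Lcal_V$ by a common tensor power---which only scales $c_1$ by a positive integer and hence preserves positive representability---I would assume $\Lcal_V|_Y$ is generated by global sections $s_0,\ldots,s_N$. Since $\Lcal_V$ is Zariski-locally trivial on the formal scheme $\Vcal$, I would lift the $s_i$ to sections $\tilde s_0,\ldots,\tilde s_N$ of $\Lcal_V$ on a formal open $\Vcal'\subseteq\Vcal$ containing $\pi(x)$ (the generic point of $Y$), and shrink $\Vcal'$ so that the lifts still generate $\Lcal_V|_{\Vcal'}$, using openness of the generating locus. These sections define a formal morphism $\psi\colon\Vcal'\to\hat\P^N_{\kcirc}$ with $\psi^*O(1)\cong \Lcal_V|_{\Vcal'}$, whose generic fibre $\psi_\eta\colon V'\to \P^{N,\an}_K$ (for $V':=\Vcal'_\eta$) pulls back the canonical formal metric $\metr_{\rm can}$ on $O(1)$ to $\metr|_{V'}$; both metrics agree because each $\tilde s_i$ is a unit-length frame on the open locus where it does not vanish. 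The canonical metric $\metr_{\rm can}$ has positively representable first Chern form: on the standard torus chart $T=\G_m^N\subset\P^N$ (obtained by dehomogenising with a fixed coordinate), the function $-\log\|\cdot\|_{\rm can}$ tropicalises to the convex piecewise linear function $\phi(x_1,\ldots,x_N)=\max(0,x_1,\ldots,x_N)$ on $\Trop(T)=\R^N$; by Example \ref{convex function} the $\delta$-preform $d'd''\phi$ is positive on $\R^N$, yielding a positive representative of $c_1(O(1),\metr_{\rm can})$ on the tropical chart $(T^\an,\varphi_T)$.

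The main obstacle, which I expect to be the hardest part, is bridging the gap between this formal-analytic picture on $V'$ and the output of the lemma, a positive representative on a topological open neighbourhood of $x$ inside a tropical chart of $\Xan$: although $x\in V'$, the subdomain $V'$ need not be open in $\Xan$, whereas tropical charts are genuinely open but require the algebraic structure of $X$. To close this gap I would invoke Theorem \ref{liftvarieties} to lift $Y$ to an algebraic closed subvariety $\widetilde Y$ of $X$ whose schematic closure in $\Vcal$ has $Y$ as an irreducible component of its special fibre, and then by algebraic approximation of the formal sections $\tilde s_i$ realise them by algebraic sections of $L^m$ on a very affine open $U\subseteq X$ with $x\in U^\an$, chosen so that some $\tilde s_i$ does not vanish at the image $p(x)\in X$. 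This yields an algebraic morphism $\Psi\colon U\to T\subseteq \P^N_K$ whose analytification pulls $\metr_{\rm can}$ back to $\metr$ near $x$. Then on the tropical chart $(W,\varphi_U)$ with $W=\trop_U^{-1}(\Omega)$ for a sufficiently small open $\Omega\ni\trop_U(x)$, the pullback $\Psi^*(d'd''\phi)$ is a positive representative of $c_1(L,\metr)|_W$, using stability of positive representability under pullback (see the discussion after Definition \ref{positivity on tropical charts} and Lemma \ref{positivity of preforms and covering}).
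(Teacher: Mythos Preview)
Your overall strategy---use the generating sections supplied by semiampleness to write the metric locally as the maximum of absolute values of invertible functions, hence as the pullback of a convex piecewise linear function, and conclude via Example \ref{convex function}---is exactly the paper's strategy. The final computation you sketch (the canonical metric on $O_{\P^N}(1)$ corresponding to $\max(0,x_1,\dots,x_N)$) is essentially the paper's formula $\|s(w)\| = \max_i |s/s_i(w)|$ leading to the positive $\delta$-preform $d'd''[\max_i u_i]$ on $N_{U,\R}$.

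The gap is in your bridging step, and your proposed fix does not work. Invoking Theorem \ref{liftvarieties} to lift $Y$ to an algebraic subvariety $\widetilde Y\subseteq X$ produces a closed subscheme of the \emph{same} dimension as $Y$, which tells you nothing about algebraic sections of $L$ on an open neighbourhood of $x$; the theorem is simply not relevant here. And ``algebraic approximation'' of the formal sections $\tilde s_i$ is not enough: an approximation changes the metric, whereas you need the exact identity $-\log\|s\| = \max_i(-\log|s/s_i|)$ to hold on a genuine open set. What the paper does instead is to first extend the formal model $(\Vcal,\Lcal_V)$ to a formal model $(\Xcal,\Lcal)$ of all of $(X^{\rm an},L^{\rm an})$ (Proposition \ref{extension of PL-metrics}) and then algebraize $(\Xcal,\Lcal)$ via the formal GAGA theorem of Fujiwara--Kato. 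Once $\Xcal$ and $\Lcal$ are algebraic, the global sections $\tilde s_i$ of $\Lcal|_Y$ extend to algebraic meromorphic sections $s_i$ of $\Lcal$, and the ratios $s/s_i$ are honest units on a very affine open $U\subseteq X$ containing the image of $x$, hence genuine elements of the character lattice $M_U$. The open neighbourhood on which the metric formula holds is obtained not from $V'=\Vcal'_\eta$ but from the anticontinuity of the reduction map: since $Y$ is closed and proper (as $x$ is an inner point of $V$), the set $\pi^{-1}(Y)$ is open in $X^{\rm an}$, and any tropical chart $(W,\varphi_U)$ with $x\in W\subseteq\pi^{-1}(Y)$ does the job.
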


\begin{proof}
We will show that $x$ is contained in a tropical chart $(V,\varphi_U)$ 
with $V \subseteq W$ such that $c_1(L|_V,\metr)$ is induced
by a positively representable element $\alpha$ in 
$P^{1,1}(V,\varphi_U)$.
Recall from \ref{positivity on tropical charts}(iii)
that this means that $\alpha$ admits a positive representative 
$\tilde\alpha$ which is a positive $\delta$-preform on $N_{U,\R}$. 
We may always replace $\Vcal$ by a dominating formal $\kcirc$-model 
of $V$. 
 {By Proposition \ref{extension of PL-metrics},}  
we may assume that $\Vcal$ is a formal open subset of a formal $\kcirc$-model $\Xcal$ of $\Xan$ such that  {$\Lcal_V$ extends} to a line bundle on $\Xcal$.
The formal GAGA theorem of Fujiwara--Kato \cite[Thm. I.10.1.2]{fujiwara-kato-1} shows that $\Xcal$ is dominated by a (proper flat) algebraic $\kcirc$-model of $X$ (see also the proof of \cite[Prop. 8.13]{gubler-kuennemann}) and that the pull-back of $\Lcal$ is an algebraic line bundle. So we may assume that $\Xcal$ and $\Lcal$ are both algebraic $\kcirc$-models.

Replacing $\Lcal$ by a  positive tensor power, there is  a generating set $\{\tilde s_1, \dots, \tilde s_n\}$ of global sections of $\Lcal|_{Y}$. 
Since $x$ is an inner point of $V$, the closure $Y$ of the reduction $\pi(x)$ of  $x$ in $\Vcal_s$ is proper \cite[Lemme 6.5.1]{chambert-loir-ducros} and hence  $Y$ is also the closure of $\pi(x)$ in $\Xcal_s$. By anticontinuity of the reduction map $\pi: \Xan \to \Xcal_s$, the subset $\pi^{-1}(Y)$ of $V$ is an open neighbourhood of $x$ in  $\Xan$.
 
 We cover $Y$ by finitely many trivializations $(\Ucal_i)_{i=1,\dots,t}$ of $\Lcal$ with special fibre $(\Ucal_i)_s$ contained  in $\Vcal_s$ and intersecting $Y$. We may assume that  there are meromorphic (algebraic) sections $s_i$ of $\Lcal$ which restrict to invertible  sections on $\Ucal_i$ and agree  with $\tilde s_i$ on $Y$. In particular, there is an open subset $U$ of $X$ such that $x \in \Uan$ and such that every $s_i$ restricts to an invertible section of $L|_U$. Since tropical charts form a basis for the topology on $\Xan$, we may assume that $U$ is very affine and that $(W,\varphi_U)$ is a tropical chart with $x \in W \subseteq \pi^{-1}(Y) \subseteq V$. 
For any $w \in W$, we have $\|s_i(w)\| \leq 1$ for all  $i \in \{1,\dots,t\}$ using that $s_i$ restricts to a global section on $Y$. Moreover, there is an $i$ such that $\pi(w) \in (\Ucal_i)_s$ and hence $\|s_i(w)\|=1$. 
For  a fixed frame $s$ of $L|_U$, we get 
$$
\|s(w)\| = \max_i \left|\frac{s}{s_i}(w)\right|.
$$
We consider the character lattice $M_U=\Ocal(U)^*/K^*$ of the torus $T$ associated to $U$. 
By definition, we have $u_i:=\frac{s}{s_i} \in M_U = N_U^*$ 
and hence $c_1(L|_V,\metr)$ is represented by the 
$\delta$-preform
$\tilde\alpha := d'd'' [\max_i u_i]$ on $N_{U,\R}$. 
Since $\max_i u_i$ is a convex function, 
Example \ref{convex function} yields that  $\tilde\alpha$ 
is a positive $\delta$-preform on $N_{U,\R}$.
\end{proof}

\begin{thm}\label{main theorem for pl}
Let $L$ be a  line bundle on an algebraic variety $X$ over $K$ and let  $W$ be an open subset of $\Xan$. Then the following properties are equivalent for a piecewise linear metric $\metr$ on $L$ over $W$:
\begin{itemize}
 \item[(1)] The piecewise linear metric $\metr$ is semipositive on $W$.
 \item[(2)] The metric is functorial $\delta$-psh.
 \item[(3)] The metric $\metr$ is functorial psh.
 \item[(4)] The $\delta$-form $c_1(L|_W,\metr)$ is positive on $W$.
 \item[(5)] The restriction of $\metr$ to $W \cap C^{\rm an}$  is psh for any closed curve $C$ of $X$.
\end{itemize}
\end{thm}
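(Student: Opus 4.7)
The plan is to establish the pentagon of equivalences through the chain $(2)\Rightarrow(3)\Rightarrow(5)\Rightarrow(1)\Rightarrow(2)$, supplemented by the equivalence $(3)\Leftrightarrow(4)$. Two of these links are formal: a positive $\delta$-current induces a positive current via the natural map $E^{p,p}\to D^{p,p}$ (Remark \ref{positive currents on W}), so $\delta$-psh implies psh and functoriality is preserved, yielding $(2)\Rightarrow(3)$; and $(3)\Rightarrow(5)$ is obtained by applying functorial plurisubharmonicity to the closed immersion of a closed curve $C$ into $X$. The equivalence $(3)\Leftrightarrow(4)$ follows from Proposition \ref{functorial current positive and positive delta-forms} applied to $\beta=c_1(L|_W,\metr)$, since for every morphism $f$ of varieties one has $[f^*c_1(L,\metr)]_D=d'd''[-\log\|f^*s\|]_D$ for any local frame $s$.

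For $(5)\Rightarrow(1)$, I would fix $x\in W$ and a compact strictly $K$-analytic neighbourhood $V\subseteq W$ with a formal model $(\Vcal,\Lcal)$ inducing $\metr|_V$. Using formal GAGA of Fujiwara--Kato as in the proof of Lemma \ref{local ample Modellmetrik ist positiv repraesentierbar}, we may replace $(\Vcal,\Lcal)$ by a dominating algebraic $\kcirc$-model and assume $\Vcal_s$ reduced. For an arbitrary proper curve $Y$ in the special fibre $\Vcal_s$, Theorem \ref{liftvarieties} produces a one-dimensional integral closed subscheme $C'$ of the generic fibre whose schematic closure $\overline{C'}$ in $\Vcal$ contains $Y$ as an irreducible component of its special fibre. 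Taking the Zariski closure $C$ of $C'$ inside $X$ yields a closed curve of $X$, so by hypothesis (5) the restriction $\metr|_{W\cap C^{\rm an}}$ is psh, i.e.\ its Chern measure on the corresponding Berkovich subspace is non-negative. The curve Lemma \ref{formal and psh on a curvelemma} together with Proposition \ref{local to global for semipositive} applied to the model $\overline{C'}$ then forces $\deg_\Lcal(Y)\ge 0$, and Lemma \ref{semipositivity and local model} delivers semipositivity of $\metr$ at $x$.

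For the crucial implication $(1)\Rightarrow(2)$, note first that by Proposition \ref{properties of semipositive metrics}(d) semipositivity is stable under pullback, so it suffices to prove that $\metr$ itself is $\delta$-psh on $W$, i.e.\ that $[c_1(L|_W,\metr)]_E$ is a positive $\delta$-current. Work locally around $x\in W$ with an algebraic local model $(\Vcal,\Lcal)$ inducing $\metr|_V$ via Corollary \ref{algebraic vs locally formal} after compactification, and let $Y$ denote the closure of the reduction of $x$ in $\Vcal_s$; by (1), $\Lcal|_Y$ is nef. Choose an ample line bundle $\Mcal$ on the model with associated algebraic metric $\metr_\Mcal$. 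By Kleiman's criterion, $(\Lcal^{\otimes m}\otimes\Mcal)|_Y$ is ample for $m$ sufficiently large and hence semiample. Lemma \ref{local ample Modellmetrik ist positiv repraesentierbar} then shows that $m\cdot c_1(L,\metr)+c_1(M,\metr_\Mcal)$ is positively representable in a neighbourhood of $x$, and Corollary \ref{delta forms vs delta currents} converts this into positivity of the associated $\delta$-current. Dividing by $m$ and evaluating on a fixed positive compactly supported $\delta$-form $\beta$ (with $m$ chosen uniformly over the compact support of $\beta$), the term $\tfrac{1}{m}\langle[c_1(M,\metr_\Mcal)]_E,\beta\rangle$ vanishes in the limit $m\to\infty$, so $\langle[c_1(L,\metr)]_E,\beta\rangle\ge 0$.

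The main obstacle is the semiampleness reduction in $(1)\Rightarrow(2)$: the auxiliary ample $\Mcal$ must be chosen uniformly over the compact support of each test form, and a limit argument is genuinely needed since positive representability is not preserved under subtraction. A secondary technical point arises in $(5)\Rightarrow(1)$, where the lifting theorem requires an algebraic model (reached via formal GAGA) and one must ensure the lifted curve extends to a closed algebraic curve of $X$.
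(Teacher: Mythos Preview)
Your chain of implications and the key inputs---Theorem~\ref{liftvarieties} for $(5)\Rightarrow(1)$, Lemma~\ref{local ample Modellmetrik ist positiv repraesentierbar} plus a perturbation-and-limit for $(1)\Rightarrow(2)$, and Proposition~\ref{functorial current positive and positive delta-forms} for $(3)\Leftrightarrow(4)$---are exactly the paper's. The steps $(2)\Rightarrow(3)\Rightarrow(5)$ and $(5)\Rightarrow(1)$ are essentially as in the paper; the latter is described a bit more locally than necessary (the paper simply passes once and for all to a global algebraic model of the compactified $X$ and applies Theorem~\ref{liftvarieties} there), but the content is the same.

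There is one genuine gap in your $(1)\Rightarrow(2)$. You write ``choose an ample line bundle $\Mcal$ on the model,'' but the algebraic $\kcirc$-model coming out of Corollary~\ref{algebraic vs locally formal} is merely \emph{proper}, and a proper model of a proper (non-projective) variety need not carry any ample line bundle---so there is nothing to perturb by and Kleiman does not apply. The paper fixes this with two reductions you omit: first, Chow's lemma together with the projection formula (see Corollaries~\ref{projection formula for chern-current} and~\ref{descent of psh}) reduces to the case $X$ projective, since $\delta$-psh can be tested after pullback along a surjective proper generically finite morphism and semipositivity is stable under pullback; second, even with $X$ projective, one must dominate the given formal model by a \emph{projective} algebraic $\kcirc$-model (the paper cites \cite[Prop.~10.5]{gubler-pisa} and Ullrich's formal GAGA), after which a very ample $\Hcal$ on $\Xcal$ is available and the argument runs. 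Once this is in place, your ``$m$ sufficiently large'' is superfluous: $(\Lcal^{\otimes m}\otimes\Mcal)|_Y$ is ample for \emph{every} $m\ge 1$ when $\Lcal|_Y$ is nef and $\Mcal|_Y$ is ample, so for each fixed $m$ positive representability holds in a neighbourhood of every point of $W$, hence globally, and the uniformity worry you flag at the end disappears---one simply lets $m\to\infty$ (or $\ve\to 0$ in the paper's parametrisation) against a fixed test form.
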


There is also an equivalent version of Theorem \ref{main theorem for pl} in terms of formal metrics. This version was given in Theorem \ref{localnotes5}. To see that their equivalence, we note that we may assume $X$ proper over $K$ by Nagata's compactification theorem \cite{nagata-1962}. Since (1)--(5) are local in the analytic topology, we may assume that $\metr$ extends to a formal metric on $L$. This shows the desired equivalence. 

\begin{proof}[ {Proof of Theorem \ref{localnotes5}}]
The above remark shows that we may assume that $\metr$ extends to a metric on $L$ which we also denote by $\metr$. 
Let $(\Xcal,\Lcal)$ be a formal $\kcirc$-model of $(X,L)$ with $\metr = \metr_\Lcal$ over $\Xan$.

(1) $\Rightarrow$ (2): 
Since semipositivity of formal metrics is functorial, it is enough to show that $[c_1(L|_W,\metr)]_E$ is a positive $\delta$-current. By the projection formula \cite[Prop. 5.9(iii)]{gubler-kuennemann}, we may  check that on a generically finite projective covering and so we may assume that $X$ is projective using Chow's lemma. 

For any $x \in W$, there is a compact strictly $K$-analytic domain $V$ as a neighbourhood in $W$ such that $(V,\Lan|_V)$ has a formal $\kcirc$-model $(\Vcal, \Mcal= \Lcal|_\Vcal)$.  Since $\metr$ is semipositive in $x$, we may choose this model such that $\deg_\Mcal(Y) \geq 0$ for all curves $Y \subseteq \Vcal_s$ which are proper over $\ktilde$. 
Let $Z$ be the closure of the reduction of $x$ in $\Vcal_s$. Since $x$ is an inner point of $V$, the variety $Z$ is proper over $\ktilde$ \cite[Lemme 6.5.1]{chambert-loir-ducros}. By construction, the restriction of $\Mcal$ to $Z$ is nef.

By Lemma \ref{semipositivity and local model}, we may always pass to a dominating formal $\kcirc$-model of $\Vcal$. Using Raynaud's theorem and \cite[Cor. 5.4]{bosch-luetkebohmert-2}, we may assume that $\Vcal$ is a formal open subset of $\Xcal$. 
By \cite[Prop. 10.5]{gubler-pisa}, $\Xcal$ is dominated by the formal completion of a projective flat $\kcirc$-model  and hence we may assume that $\Xcal$ is projective. Then the formal GAGA-theorem of Ullrich \cite[Thm. 6.8]{ullrich} shows that $\Lcal$ is an algebraic line bundle as well.

We fix a very ample line bundle $\Hcal$ on $\Xcal$ with generic fibre $H$. Let $\metr_H$ be the semipositive algebraic metric on $H$ given by the very ample model $\Hcal$. 
For every rational $\ve>0$, the $\Q$-line bundle $L_\ve :=L \otimes H^\ve$ has the  metric $\metr_\ve:=\metr \otimes \metr_H^\ve$ over $W$ given by the model $\Lcal_\ve:=\Lcal \otimes \Hcal^\ve$ on $\Xcal$.

The restriction of $\Lcal_\ve$ to $Z$ is ample  as it is the tensor product of an ample line bundle with a nef line bundle. 
By Lemma \ref{local ample Modellmetrik ist positiv repraesentierbar}, the  $\delta$-form $c_1(L,\metr_\ve)$ is positively representable on an open  neighbourhood of $x$ in $W$. As $x$ was any point of $W$, we conclude that $c_1(L|_W,\metr)$ is positively representable on $W$. By Corollary \ref{delta forms vs delta currents}, the associated $\delta$-current $[c_1(L|_W,\metr)]_E$ is positive and hence  
$$\langle [c_1(L|_W, \metr)]_E,\beta\rangle +  \ve \langle [c_1(H|_W,\metr_H)]_E, \beta \rangle 
= \langle [c_1(L_\ve|_W, \metr_\ve)]_E,\beta\rangle \geq 0$$
for any positive $\beta\in B_c^{n-1,n-1}(W)$ where $n:=\dim(X)$. Using $\ve  \to 0$, we deduce $\langle [c_1(L|_W, \metr)]_E,\beta\rangle \geq 0$ and hence  $ [c_1(L|_W, \metr)]_E $ is positive proving (2).

(2) $\Rightarrow$ (3):  Since any positive $\delta$-current is a positive current (see Remark \ref{positive currents on W}), this is obvious.

(3) $\Leftrightarrow$ (4): This follows from Proposition \ref{functorial current positive and positive delta-forms}.

(3) $\Rightarrow$ (5): This is obvious.

(5) $\Rightarrow$ (1): By Remark \ref{properties of algebraic metrics}, we may assume that $\metr$ is given by an algebraic $\kcirc$-model $(\Xcal,\Lcal)$ of $(X,L)$. Let $Y$ be a closed curve in $\Xcal_s$ with  $Y \subseteq \pi(W)$. Since $\Xcal$ is proper, it is clear that $Y$ is proper over the residue field $\ktilde$. By Theorem \ref{liftvarieties}, there is a closed curve $C$ in $X$ whose closure in $\Xcal$ has $Y$ as an irreducible component. 
We look again at the discrete Radon measure which extends the current $[c_1(L|_{C^{\rm an} },\metr)]_D$ on $C^{\rm an}$. Positivity of $[c_1(L|_{W \cap C^{\rm an}},\metr)]_D$  yields positivity of the discrete Radon measure. 
By Lemma \ref{curveslemma}, we deduce that the restriction of $\metr$ to $C^{\rm an} \cap W$ is semipositive.  This means in particular $\deg_\Lcal(Y) \geq 0$ proving that $\Lcal$ is vertically nef and (1).
\end{proof}

\section{Semipositive approximable metrics} \label{Section: Semipositive approximable metrics} 

We consider a metric $\metr$ 
on a line bundle $L$ of a proper variety $X$ over $K$.

\begin{art} \label{semipositive approximable metrics}
We say that  $\metr$  is 
\emph{semipositive approximable} if it is the uniform limit of a
sequence $(\metr_n)_{n\in \N}$ of  semipositive $\Q$-formal metrics on $L^\an$. 

This class of metrics was introduced by Zhang and includes canonical metrics of dynamical systems.
It is clear that every semipositive formal metric is semipositive approximable. 
 {In this section, we will prove the converse. This was proven in the special case of a  discretely valued field with residue characteristic $0$ by Boucksom, Favre and Jonsson  \cite[Remark after Thm. 5.12]{boucksometal-2012}.
Our proof works in general and is based on the {Theorem} \ref{liftvarieties}. 
Amaury Thuillier told us that he has a similar proof.}
\end{art}

\begin{prop} \label{semipositive approximable formal metrics}
Suppose that $\metr$ is a formal metric. Then $\metr$ is semipositive approximable if and only if it is semipositive.
\end{prop}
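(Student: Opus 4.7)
The plan is to use Theorem \ref{localnotes5} to reduce the problem to checking pshness on curves, where the convergence behaves well. One direction is trivial: if $\metr$ is a semipositive formal metric, then the constant sequence $\metr_n := \metr$ witnesses that it is semipositive approximable.

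For the converse, assume $\metr$ is a formal metric that is the uniform limit of semipositive $\Q$-formal metrics $(\metr_n)_{n\in\N}$ on $L^\an$. By the equivalence $(1) \Leftrightarrow (5)$ in Theorem \ref{localnotes5} applied to $\metr$ on $W = X^\an$, it suffices to prove that for every closed curve $C$ of $X$, the restriction $\metr|_{C^\an}$ is psh. Fix such a curve $C$. For each $n$, choose $k_n \in \N_{>0}$ such that $\metr_n^{\otimes k_n}$ is a semipositive formal metric on $L^{\otimes k_n}$. Its restriction to $C^\an$ is a semipositive formal metric on $L^{\otimes k_n}|_C$, so by Theorem \ref{localnotes5}, $(1) \Rightarrow (3)$, applied to the curve $C$, this restriction is psh on $C^\an$; dividing by $k_n$, the restriction $\metr_n|_{C^\an}$ is psh on $C^\an$ as well.

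It remains to show that the restriction $\metr|_{C^\an}$, as the uniform limit of the psh metrics $\metr_n|_{C^\an}$, is itself psh. Pshness of a continuous metric translates into pshness of $-\log\|s\|$ for every local frame $s$ of $L$, and $-\log\|s\|_n \to -\log\|s\|$ uniformly on the domain of $s$. For any test form $\alpha \in A_c^{0,0}(V)$ on an open $V \subseteq C^\an$, we have
\[
\langle d'd''[-\log\|s\|_n]_D, \alpha\rangle = \int_V (-\log\|s\|_n)\, d'd''\alpha \longrightarrow \int_V(-\log\|s\|)\, d'd''\alpha = \langle d'd''[-\log\|s\|]_D, \alpha\rangle
\]
by uniform convergence on the compact support of $d'd''\alpha$. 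If $\alpha$ is positive, each integrand on the left is non-negative by pshness of $-\log\|s\|_n$, so the limit is non-negative; thus $d'd''[-\log\|s\|]_D$ is a positive current on $V$, and $-\log\|s\|$ is psh. Consequently $\metr|_{C^\an}$ is psh on $C^\an$, and Theorem \ref{localnotes5} finishes the proof.

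There is no serious obstacle: the entire substance of the argument is contained in Theorem \ref{localnotes5}, itself a consequence of the lifting result, Theorem \ref{liftvarieties}. The remaining step—closure of psh continuous functions under uniform limits—is immediate from weak continuity of $d'd''$ and the fact that non-negative numerical limits stay non-negative. Since Theorem \ref{localnotes5} allows us to test semipositivity curve-by-curve, and the Chambert--Loir/Ducros framework on curves reduces pshness to a discrete positivity condition via the Monge--Amp\`ere measure, no separate approximation theory for measures or capacities is needed.
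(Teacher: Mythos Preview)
Your proof is correct and follows essentially the same route as the paper: both reduce to curves via the lifting result Theorem~\ref{liftvarieties}, and then pass to the limit. The paper invokes the lifting theorem directly on a curve in the special fibre and uses weak convergence of Chambert--Loir measures, whereas you factor through the equivalence $(1)\Leftrightarrow(5)$ of Theorem~\ref{localnotes5} and use closure of psh under uniform limits at the level of currents; these are the same argument in slightly different packaging.
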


\begin{proof} We have to show that a semipositive approximable formal metric $\metr$ is semipositive. Let $\metr_n$ be semipositive $\Q$-formal metrics on $L^\an$  approximating the formal metric $\metr$ uniformly. By Remark \ref{properties of algebraic metrics}, there is an algebraic $\kcirc$-model  $(\Xcal,\Lcal)$  of $(X,L)$ with $\metr = \metr_\Lcal$. Let $V$ be  a closed curve contained in $\Xcal_s$. Then Theorem \ref{liftvarieties} shows that there is a closed curve $Y$ in $X$ such that $V$ is an irreducible component of the special fibre of the closure $\overline{Y}$ in $\Xcal$. The restriction of the metrics $\metr_n$ to $Y$ are semipositive  and the Chambert--Loir measures $c_1(L|_Y,\metr_n)$ converge weakly to $c_1(L|_Y,\metr)$. We conclude that $c_1(L|_Y,\metr)$ is a positive discrete measure.  By Lemma \ref{curveslemma}, the restriction of $\metr$ to $L|_Y$ is semipositive and hence $\deg_\Lcal(V)\geq 0$. This proves semipositivity of the formal metric $\metr$. 
\end{proof}

\begin{prop} \label{semipos-approx yields functorial psh}
{If $\metr$ is semipositive approximable, then the pull-back of $\metr$ to any curve is psh.}
\end{prop}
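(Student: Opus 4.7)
The plan is to apply Theorem \ref{main theorem for pl} to each approximating formal metric and then pass to the uniform limit using the fact that positivity of currents is closed under weak convergence.

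Let $(\metr_n)_{n \in \N}$ be a sequence of semipositive $\Q$-formal metrics on $L^{\rm an}$ converging uniformly to $\metr$. For each $n$, some positive tensor power $\metr_n^{\otimes k_n}$ is semipositive formal, and by Theorem \ref{main theorem for pl} it is functorial psh. Since $-\log\|s^{\otimes k_n}\|_n = -k_n \log\|s\|_n$ and plurisubharmonicity is stable under multiplication by positive scalars, each $\metr_n$ is itself functorial psh. In particular, for any morphism $\varphi:C \to X$ from a curve $C$, the pull-back $\varphi^*\metr_n$ is a psh metric on $\varphi^*L$ over $C^{\rm an}$ (by Proposition \ref{properties of semipositive metrics}(d) it remains semipositive $\Q$-formal), and the sequence $\varphi^*\metr_n$ converges uniformly on $C^{\rm an}$ to $\varphi^*\metr$.

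To conclude that $\varphi^*\metr$ is psh, fix an open $U \subseteq C$ and a frame $s$ of $\varphi^*L$ over $U$. Setting $f_n := -\log\|s\|_n$ and $f := -\log\|s\|$, we have $f_n \to f$ uniformly on $U^{\rm an}$. For any non-negative compactly supported smooth function $\eta$ on $U^{\rm an}$,
$$
\langle d'd''[f]_D, \eta \rangle = \int_{U^{\rm an}} f \cdot d'd''\eta = \lim_{n\to\infty} \int_{U^{\rm an}} f_n \cdot d'd''\eta = \lim_{n\to\infty} \langle d'd''[f_n]_D, \eta \rangle \geq 0,
$$
where the middle equality uses that the compactly supported smooth form $d'd''\eta$ defines a Radon measure of finite total variation against which the uniformly convergent sequence $f_n$ integrates in the limit, and the last inequality uses the plurisubharmonicity of $\varphi^*\metr_n$. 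Thus $d'd''[f]_D$ is a positive current, proving that $\varphi^*\metr$ is psh.

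The main ingredient is Theorem \ref{main theorem for pl} applied to each approximant; the verification that pluri\-subharmonicity is preserved under uniform limits is a straightforward continuity argument for the current pairing and is not expected to be a real obstacle.
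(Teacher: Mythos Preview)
Your proof is correct and follows the same overall strategy as the paper: show that each approximating metric is psh after pull-back to a curve, then pass to the uniform limit. The difference lies in how these two steps are carried out. For the first step you invoke the full Theorem~\ref{main theorem for pl} to conclude that each $\metr_n$ is functorial psh, whereas the paper first reduces to the case where $X$ itself is a (projective) curve via Corollary~\ref{descent of psh} and then uses Lemma~\ref{curveslemma} directly, identifying the first Chern current with the Chambert--Loir measure. For the limit step, you argue via continuity of the pairing $f \mapsto \int f\, d\mu_{d'd''\eta}$ under uniform convergence, while the paper phrases the same thing as weak convergence of the Chambert--Loir measures $c_1(L|_C,\metr_n) \to c_1(L|_C,\metr)$. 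Your route is slightly more streamlined in that it avoids the reduction to projective curves, at the cost of appealing to the heavier Theorem~\ref{main theorem for pl} where the paper gets by with the curve-specific Lemma~\ref{curveslemma}.
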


\proof Semipositive approximable is stable under pull-back, so it is enough to show that $\metr$ is psh in the case of a curve $X$. By Corollary \ref{descent of psh}, it is enough to show that the pull-back metric is psh on $X'$ for a proper 
surjective morphism $X' \to X$ of curves. We conclude that we may assume $X$ projective over $K$. 
Then the first Chern current $[c_1(L,\metr)]_D$ is induced by the corresponding Chambert--Loir measure $c_1(L,\metr)$. By definition, the latter is the weak limit of positive discrete measures and hence $c_1(L,\metr)$ is also a positive measure. This means that the first Chern current $[c_1(L,\metr)]_D$ is positive proving the claim. 
 \qed

\section{Piecewise smooth metrics} \label{Piecewise smooth metrics}

We have introduced piecewise smooth metrics on line bundles in \cite[\S 8]{gubler-kuennemann}. They include smooth metrics, formal metrics and 
  canonical metrics. In this section, we relate them to the positivity notions from Section \ref{pluri}.

\begin{art} \label{piecewise smooth and corner locus}
Let $C = (\Ccal,m)$ be a tropical cycle on $N_\R$ for a lattice $N$ of finite rank and let $\Omega$ be an open subset of the support $|\Ccal|$.  We say that $\phi: \Omega \to \R$ is a {\it piecewise smooth} function if there is an integral $\R$-affine polyhedral subdivision $\Dcal$ of $\Ccal$ and  smooth functions $\phi_\sigma: \Omega \cap \sigma \to \R$ with  $\phi|_{\Omega \cap \sigma} = \phi_\sigma$ for every $\sigma \in \Dcal$. 
 
In a similar way as above, piecewise smooth superforms on $\Omega$ are defined in \cite[3.10]{gubler-kuennemann}. In particular, we get a piecewise smooth superform $\dpa \phi$ (resp. $\dpb \phi$) given by the superform $d' \phi_\sigma$ (resp. $d'' \phi_\sigma$) on $\Omega \cap \sigma$ for every $\sigma \in \Ccal$.

We recall from \cite[1.10--1.12]{gubler-kuennemann} that  a piecewise smooth function $\phi$ on $|\Ccal|$ induces a tropical cycle $\phi \cdot C$ of codimension $1$ in $|\Ccal|$ called the {\it corner locus of $\phi$}. Its support is the non-differentiability locus of $\phi$ and its smooth weights are defined in terms of the outgoing slopes of $\phi$. The above notions are related by  
\begin{equation} \label{tropical PL}
d'd''[\phi] = [\dpa \dpb \phi] + \delta_{\phi\cdot \Trop (U)} \in D^{1,1}(|\Ccal|) 
\end{equation}
as a consequence of the tropical Poincar\'e--Lelong formula (see \cite[Cor. 3.19]{gubler-kuennemann}), where $[\alpha]$ denotes
 the supercurrent associated to a piecewise smooth $\alpha$.
\end{art}

\begin{art} \label{tropical frame}
{Let $L$ be a line bundle on  {an} algebraic variety $X$ over $K$. We recall that a metric $\metr$ on $L$ over an open subset $W$ of $\Xan$ is called {\it piecewise smooth} if 
for any $x \in W$ there is a tropical chart $(V,\varphi_U)$ with $x \in V \subseteq W$, a frame  $s$ of $L$ over $U$ and
a piecewise smooth function $\phi:\Omega\to\R$ 
such that
$-\log \|s\||_V=\phi\circ \trop_{U}|_V$. 
Here, the open subsets 
$\Omega:=\trop_{U}(V)$ of $\Trop(U)$ 
 {may be assumed to 
have convex intersection with all faces of
$\Trop(U)$} and 
we may assume that $\phi$ extends to a piecewise smooth function 
$\tilde\phi:N_\R \to \R$. 
In this case, we will call $(V,\varphi_U,\Omega,s,\phi)$ a {\it 
tropical frame} for the piecewise smooth metric $\metr$.} 

The choice of $\tilde\phi$ will be not be important for the following. We need this extension only to make the corner locus $\tilde\phi \cdot \Trop(U)$ well-defined as a tropical cycle contained in $\Trop(U)$. The definition of the weights of the corner locus in \cite[1.10]{gubler-kuennemann} shows  that the restrictions of the weights to 
$\Omega$ depend only on $\phi$. 
We have therefore decided to drop $\tilde\phi$ from our 
notation for tropical frames.
\end{art}

\begin{art} \label{canonical decomposition}
Let $\metr$ be a piecewise smooth metric on $L$ over $W$. We recall from \cite[9.5--9.8]{gubler-kuennemann} that there is a canonical piecewise smooth form $c_1(L,\metr)_{\rm ps}$ on $W$ and a canonical generalized $\delta$-form $c_1(L,\metr)_{\rm res} \in P^{1,1}(W)$ of codimension $1$ such that for the associated $\delta$-currents, we have 
\begin{equation} \label{candeco} 
[c_1(L|_W,\metr)]_E = [c_1(L|_W,\metr)_{\rm ps}]_E + [c_1(L|_W,\metr)_{\rm res}]_E
\end{equation}
in a functorial way. If $(V,\varphi_U,\Omega,s,\phi)$  is a tropical frame for $\metr$, then $c_1(L,\metr)_{\rm ps}$ is given on the tropical chart $(V,\varphi_U)$ by $\dpa \dpb \phi$  
and the generalized $\delta$-form $c_1(L,\metr)_{\rm res}$ is represented on $(V,\varphi_U)$ by the $\delta$-preform $d'd''\delta_{\tilde \phi \cdot N_{U,\R}} \in P^{1,1}(N_{U,\R})$. 
\end{art}

\begin{thm}\label{psh slope condition}
Let $L$ be a line bundle on  {an} algebraic variety $X$ over $K$.
Let $\metr$ be a piecewise smooth metric on  $L$ over an open subset $W$ of
$X^\an$. 
Then the metric $\metr$ is plurisubharmonic if and only if 
{for each tropical frame $(V,\varphi_U,\Omega,s,\phi)$ of $\metr$} we have
\begin{enumerate}
\item[(i)]
the restriction of $\phi$ to each maximal face of $\Trop (U) {\cap \Omega}$ is a convex function
and
\item[(ii)]
the corner locus $\tilde\phi\cdot \Trop (U)$ is effective on $\Omega$ .
\end{enumerate}
\end{thm}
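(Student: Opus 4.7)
The plan is to work locally on a single tropical frame $(V,\varphi_U,\Omega,s,\phi)$ of $\metr$, so that $-\log \|s\|=\phi\circ \trop_U$ on $V$; the psh condition then becomes positivity of the current $d'd''[\phi\circ\trop_U]_D$ on $V$. Using the canonical decomposition in \ref{canonical decomposition} together with the tropical Poincar\'e--Lelong formula \eqref{tropical PL}, one obtains, at the level of supercurrents on $\Omega\subseteq \Trop(U)$,
\[
d'd''[\phi] = [\dpa \dpb \phi] + \delta_{\tilde\phi\cdot \Trop(U)},
\]
the first summand being the supercurrent of the piecewise smooth superform $\dpa\dpb\phi$ and the second the $\delta$-supercurrent of the codimension-$1$ corner locus. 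By Remark \ref{compare cld}, testing the current $d'd''[\phi\circ\trop_U]_D$ against tropicalized strongly positive smooth $(n-1,n-1)$-forms translates into testing the supercurrent $d'd''[\phi]$ against strongly positive $\alpha\in A_c^{n-1,n-1}(\Omega)$. So the problem reduces to showing that $d'd''[\phi]$ is a positive supercurrent on $\Omega$ if and only if (i) and (ii) hold.

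For the reverse direction, I refine the polyhedral structure so that $\phi$ is smooth on each maximal face of $\Trop(U)\cap\Omega$. Then condition (i) together with the equivalence of convexity and weak positivity of $\dpa\dpb$ for smooth functions (the face-by-face analog of Remark \ref{smooth psh}, compare Example \ref{convex function}) makes $[\dpa\dpb\phi]$ a positive supercurrent on $\Omega$; condition (ii) combined with Remark \ref{positive delta-forms vs supercurrents} makes $\delta_{\tilde\phi\cdot\Trop(U)}$ a positive supercurrent on $\Omega$. Summing yields positivity of $d'd''[\phi]$, hence $\metr$ is psh.

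For the forward direction, (i) is extracted by testing against strongly positive $\alpha$ supported in the relative interior $\relint(\sigma)\cap\Omega$ of a maximal face $\sigma$ and chosen to avoid the corner-locus support: the $\delta$-term drops out and positivity of the pairing forces weak positivity of $\dpa\dpb\phi|_\sigma$ on $\relint(\sigma)\cap\Omega$, equivalent to convexity of the smooth $\phi|_\sigma$ there. For (ii), with (i) already established the piecewise smooth contribution is positive, and testing the total supercurrent against strongly positive $\alpha$ whose support shrinks to the relative interior of a single codimension-$1$ face of $\tilde\phi\cdot\Trop(U)$---possible because distinct codimension-$1$ faces of the corner locus have disjoint relative interiors---isolates the weight of the corner locus on that face, which must then be non-negative. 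Ranging over all maximal faces and all codimension-$1$ faces of the corner locus gives (i) and (ii).

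The main obstacle is the forward direction for (ii): the $\delta$-current of the corner locus is only probed by smooth test forms, which integrate over a full neighbourhood rather than evaluating individual weights. The clean way around this is Remark \ref{positive delta-forms vs supercurrents}, which translates positivity of $\delta_{\tilde\phi\cdot\Trop(U)}$ into effectiveness of the underlying codimension-$1$ tropical cycle; face-by-face bump-function localisation on $\Omega$, applied after the piecewise smooth part has been shown positive, then recovers each weight.
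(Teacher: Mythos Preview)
Your forward direction is essentially the paper's argument: positivity of $[c_1(L|_W,\metr)]_D$ against forms pulled back from $\Omega$ gives positivity of the supercurrent $d'd''[\phi]$ on $\Omega$; then (i) follows by testing inside $\relint(\Delta)\cap\Omega$ and (ii) by making the $[\dpa\dpb\phi]$-contribution negligible. One remark on your phrasing for (ii): the clause ``with (i) already established the piecewise smooth contribution is positive'' is not what does the work, since $A\geq 0$ and $A+B\geq 0$ do not yield $B\geq 0$. What you actually use, and what your shrinking argument supplies, is that the piecewise smooth contribution can be made arbitrarily \emph{small} while the corner contribution stays bounded below; the paper phrases this as altering the test form off the support of the corner locus rather than shrinking its support, but the content is the same.

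The genuine gap is in the reverse direction. You reduce the psh condition on $V$ to positivity of $d'd''[\phi]$ on $\Omega$, i.e.\ to testing against $\alpha_U\in A_c^{n-1,n-1}(\Omega)$. The reduction to a single $V$ is fine (positivity of currents is local via partition of unity), but the passage from $V$ to $\Omega$ is not: a smooth form $\alpha\in A_c^{n-1,n-1}(V)$ is by definition only \emph{locally} a tropicalization, with respect to possibly finer charts $(V',\varphi_{U'})$ with $U'\subsetneq U$, and there is no reason for $\alpha$ to equal $\trop_U^*(\alpha_U)$ for some superform on $\Omega$. So positivity of $d'd''[\phi]$ on $\Omega$ for each frame does not by itself give positivity of the current against all smooth test forms on $V$.

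The paper's remedy is to adapt the frame to the test form. Given a positive $\alpha\in A_c^{n-1,n-1}(W)$, one covers its support by finitely many tropical frames on which $\alpha$ \emph{is} given by superforms $\alpha_i$, passes to a common nonempty very affine $U\subseteq\bigcap_i U_i$ on which $\alpha|_{U^{\an}}$ is represented by a single $\alpha_U\in A^{n-1,n-1}(\Trop(U))$, and then uses that the top-degree products $c_1(L|_W,\metr)_{\rm ps}\wedge\alpha$ and $c_1(L|_W,\metr)_{\rm res}\wedge\alpha$ have support contained in $U^{\an}$ (this is where \cite[Cor.~5.6, 9.5]{gubler-kuennemann} enter), so no mass is lost by restricting. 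After shrinking $\Omega$ one obtains a tropical frame $(V,\varphi_U,\Omega,s,\phi)$ at this level, to which the hypotheses (i) and (ii) apply, and \eqref{ps2} then gives $\langle[c_1(L|_W,\metr)]_D,\alpha\rangle\geq 0$. Your argument is missing this passage to a chart on which the given $\alpha$ is globally tropicalized.
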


\proof 
Let $n:=\dim(X)$ and let $(V,\varphi_U,\Omega,s,\phi)$ be a tropical frame for $\metr$.  
A positive superform $\alpha_U \in A^{n-1,n-1}(\Omega)$  
induces a positive $(n-1,n-1)$-form $\alpha$ on $V$. 
Note that $\alpha$ has compact support in $V$ if and only 
if $\alpha_U$ has compact support in $V$ 
(see \cite[Cor. 3.2.3]{chambert-loir-ducros}).  
Assuming $\alpha$ with compact support and using
\begin{equation} \label{ps0}
\langle [c_1(L|_W,\metr)]_D , \alpha \rangle =  \langle d'd''[-\log \|s\|]_D, \alpha \rangle =  \langle d'd''[\phi], \alpha_U \rangle,
\end{equation}
the tropical Poincar\'e--Lelong formula \eqref{tropical PL} yields 
\begin{equation} \label{ps1}
\langle [c_1(L|_W,\metr)]_D , \alpha \rangle = \langle [\dpa \dpb \phi], \alpha_U \rangle + \langle \delta_{\tilde\phi\cdot \Trop (U)}, \alpha_U \rangle. 
\end{equation}
We first assume that $\metr$ is plurisubharmonic which means that the first Chern current $[c_1(L|_W,\metr)]_D$ is positive. Let $\alpha_U$ be any positive superform of bidegree $(n-1,n-1)$ with compact support in $\Omega$ and let $\alpha$ be the induced smooth form on $V$. Since $\alpha$ is positive, we deduce that \eqref{ps0} is non-negative and hence  {$d'd''[\phi ]$} is a positive supercurrent on $\Omega$. We deduce that the piecewise smooth extension $\tilde\phi$ induces a positive supercurrent on $\Omega \cap \relint(\Delta)$ for any maximal face $\Delta$ of $\Trop(U)$. By Example \ref{convex function},   $\phi$ is a convex function on $\Omega \cap \Delta$ proving (i).  
 {Note that the support of the corner locus $\tilde\phi\cdot \Trop (U)$ is of smaller dimension than the maximal faces of $\Trop(U)$ and hence we may alter $\alpha_U$ outside this support to a positive superform $\alpha_U'$ such that $\langle [\dpa \dpb \phi], \alpha_U' \rangle$ is arbitrarily small. Hence we deduce from  
\eqref{ps1} that $\delta_{\tilde\phi\cdot \Trop (U)}$ is a positive supercurrent on $\Omega$. 
Applying 
Example \ref{polyhedral supercurrent} to the maximal faces of $\Trop(U)$, we get (ii).}

Conversely, we assume that (i) and (ii) are always satisfied. 
Given a positive smooth form $\alpha$ on $\Xan$ with compact support in $W$ and of bidegree $(n-1,n-1)$,
we have to show that  $\langle [c_1(L|_W,\metr)]_D ,\alpha\rangle\geq 0$. 
We cover the support of $\alpha$ by finitely many non-empty tropical frames 
$(V_i,\varphi_{U_i},\Omega_i,s_i,\phi_i)$, $i=1, \dots,  {l}$, such 
that $\alpha$ is given on $V_i$ by $\alpha_i \in A^{n-1,n-1}(\Omega_i)$ 
for $\Omega_i:=\trop_{U_i}(V_i)$. Then we consider a non-empty very affine 
open subset $U \subseteq U_1 \cap \cdots \cap  {U_l}$. 
Then there is a unique $\alpha_U \in A^{n-1,n-1}(\Trop(U))$ such 
that $\alpha$ is given on $\Uan$ by $\alpha_U$ (as the argument in 
\cite[Prop. 5.13]{gubler-forms} shows, see also 
\cite[Prop. 5.7]{gubler-kuennemann}). 
Note that the support of $\alpha_U$ is not necessarily compact, 
but it is contained in 
$\Omega:=  {\bigcup_{i=1}^l} F_i^{-1}( {\Omega_i})$ for the 
canonical integral $\R$-affine maps $F_i:N_{U,\R} \to N_{U_i,\R}$. 
We define $\phi: \Omega \to \R$ on $F_i^{-1}( {\Omega_i})$ by 
$\phi:= \phi_i \circ F_i$ and $V:=\trop_U^{-1}(\Omega)$. 
Then $V$ contains $\supp(\alpha) \cap \Uan$. 
Note that $c_1(L|_W,\metr)_{\rm res} \wedge \alpha$ and 
$c_1(L|_W,\metr)_{\rm ps} \wedge \alpha$ are of type $(n,n)$ 
and hence their support is contained in $\Uan$ 
(see \cite[Cor. 5.6, 9.5]{gubler-kuennemann}). 
We conclude that both supports are contained in a compact subset $C$ of $V$. 
By 
{continuity} of the tropicalization map, we get that $\trop_U(C)$ 
is a compact subset of $\Omega$. We may shrink $\Omega$ a bit still 
containing $\trop_U(C)$ such that $\phi$ is the restriction of a 
piecewise smooth function $\tilde\phi$ on $N_{U,\R}$.  
Then $(V,\varphi_U, \Omega, s, \phi)$ is a tropical frame for $\metr$. 
Evaluating \eqref{candeco} at $\alpha$ gives 
\begin{equation} \label{ps2}
\langle [c_1(L|_W,\metr)]_D , \alpha \rangle 
= \int_{ {|\Trop(U)|}} \dpa \dpb \phi \wedge \alpha_U  
+ \int_{|\Trop(U)|}\delta_{\tilde\phi\cdot \Trop (U)} \wedge \alpha_U. 
\end{equation}
Since $\alpha$ is positive, the superform $\alpha_U$ is positive (see Proposition \ref{local positivity is fine} and Remark \ref{compare cld}). 
 {By (i), $\phi$ is convex and hence Example \ref{convex function} yields that}  
$\dpa \dpb \phi$ restricts to a positive superform on any maximal face of $\Trop(U) \cap \Omega$ where $\phi$ is smooth. 
 {Then  Proposition \ref{positivity properties of superforms} shows} 
that $ \dpa \dpb \phi \wedge \alpha_U$ is also positive on any such maximal face of $\Trop(U) \cap \Omega$. It follows that the first integral in \eqref{ps2} is non-negative. 
 {By (ii), the smooth weight function of a maximal face $\sigma$ of the corner locus  $\tilde\phi\cdot \Trop (U)$ is non-negative on $\sigma \cap \Omega$ and the positivity of the superform $\alpha_U$ on $\Omega$ yields that $\alpha_U|_{\Omega \cap \sigma}$ is a positive $(n-1,n-1)$-form. We conclude that the second integral is non-negative  and hence \eqref{ps2} is non-negative.} 
\qed

\begin{rem}\label{rem psh slope condition}
Let $L$ be a line bundle on  {an} algebraic variety $X$.
Let $\metr$ be a piecewise smooth metric on  $L$ over an open 
subset $W$ of $X^\an$. 

(a) Our proof of Theorem \ref{psh slope condition} shows as well
that $\metr$ is already plurisubharmonic if $W$ admits a basis of
tropical frames $(V,\varphi_U,\Omega,s,\phi)$ for $\metr$ such that
conditions (i) and (ii) in Theorem \ref{psh slope condition} hold. 
We may then even replace (i) by the seemingly weaker condition:
\begin{itemize}
\item[(i')] \emph{there is a polyhedral complex $\Ccal$ with $|\Ccal|=|\Trop(U)|$ such that  $\phi|_{\Omega \cap \Delta}$ is a smooth convex function for any maximal face $\Delta$ of $\Ccal$.} 
\end{itemize}

(b) Let $(V,\varphi_U,\Omega,s,\phi)$ be a tropical frame of 
$\metr$. 
Let $f:X'\to X$ be a morphism of varieties and $(V',\varphi_{U'})$
a tropical chart on $X'$ such that 
 {the intersection of every face of $\Trop(U')$ with $\Omega'={\trop}_{U'}(V')$}
is convex, $f(U')\subseteq U$ and
$f^\an(V')\subseteq V$. Let $F:N_{U',\R}\to N_{U,\R}$ be the 
integral $\R$-affine map induced by $f$, let $\phi'$ be the restriction of $\phi \circ F$ to $\Omega'$ 
and let us consider the frame $s':=f^*(s)|_{U'}$ of $L':=f^*(L)$ over $U'$. Then $(V',\varphi_{U'},\Omega',s',\phi')$ is 
a tropical frame for the piecewise smooth metric $f^*\metr$ of $L$ over $f^{-1}(W)$. 

Assume that  $(V,\varphi_U,\Omega,s,\phi)$ satisfies condition
\ref{psh slope condition}(i). Then condition \ref{psh slope condition}(i)  holds for {$(V',\varphi_{U'},\Omega',s',\phi')$} as well. 

(c) Part (b) applies to the special case when $(V', \varphi_{U'})$ is also a tropical chart of $X$ with $V' \subseteq V$ and $U' \subseteq U$. 
Then $f = \id_X$, $L'=L$ and $s'=s$. 
We conclude that the tropical frame $(V',\varphi_{U'},\Omega',s,\varphi')$ satisfies \ref{psh slope condition}(i)
if the tropical frame $(V,\varphi_{U},\Omega,s,\varphi)$ 
satisfies \ref{psh slope condition}(i).
Observe that it is not clear whether an analog of the above 
statement holds for condition (ii) in Theorem \ref{psh slope condition}. To achieve that, we have to work below with positive representable $\delta$-preforms.

(d) Let $(V,\varphi_U,\Omega,s,\phi)$ be a tropical frame of 
$\metr$.
Instead of \ref{psh slope condition}(ii) we may consider the following stronger condition:
\begin{itemize}
\item[(ii')] 
\it{there exists a piecewise smooth 
function $\tilde\phi:N_{U,\R}\to \R$ with $\tilde\phi|_\Omega=\phi$ 
such that 
the  corner locus $\tilde\phi\cdot N_{U,\R}$ defines a positive
$\delta$-preform on $\tilde\Omega$ for some open $\tilde\Omega$
in $N_{U,\R}$ with $\Omega =\tilde\Omega\cap \Trop(U)$.}
\end{itemize}
In the setup of part (b)
we conclude from Proposition \ref{positivity properties of preforms}(c) that the tropical frame $(V',\varphi_{U'},\Omega',s',\varphi')$
of $f^*\metr$ fulfills conditions (ii')
if the tropical frame $(V,\varphi_{U},\Omega,s,\varphi)$ 
satisfies condition (ii').
\end{rem}

\begin{ex}\label{slope condition for curves}
Assume in Theorem \ref{psh slope condition} that $X$ is a curve. Then $\Trop(U)$ is a metrized graph, where the length 
of a primitive vector of an edge is defined as $1$, and $\Omega$ is an open subset of $\Trop(U)$. In this case, condition (i) and (ii) can be summarized by the condition that for any $\omega \in \Omega$, the sum of the outgoing slopes of $\phi$ at $\omega$ (along the finitely many edges emerging from $\omega$) is non-negative.
\end{ex}

The following result gives a sufficient condition for a 
piecewise smooth metric to be plurisubharmonic. 
It can be checked on a given covering by tropical charts.

\begin{prop}\label{second psh slope condition}
Let $L$ be a line bundle on  {an} algebraic variety $X$.
Let $\metr$ be a piecewise smooth metric on  $L$ over an open subset $W$ of
$X^\an$. 
Then the metric $\metr$ is functorial $\delta$-psh if $W$ admits a covering
by tropical frames $((V_i,\varphi_{U_i},\Omega_i,s_i,\phi_i))_{i\in I}$ for $\metr$ satisfying 
the  {following conditions for all $i \in I$:} 
\begin{enumerate}
\item[(i')]
{there is a polyhedral complex $\Ccal$ with $|\Ccal|=|\Trop(U_i)|$ such that $\phi_i|_{\Delta \cap \Omega}$ is smooth and convex for every maximal face $\Delta$ of $\Ccal$,} 
\item[(ii')]
$\phi_i$ is the restriction of a piecewise smooth function $\tilde\phi_i:N_{U,\R}\to \R$ as in Remark \ref{rem psh slope condition}(d) such that 
$\tilde\phi_i\cdot N_{U_i,\R}$ defines a positive
$\delta$-preform on $\tilde\Omega_i$.
\end{enumerate}
In particular, the metric $\metr$ is then plurisubharmonic.
\end{prop}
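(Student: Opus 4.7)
The plan is to establish positivity of the first Chern $\delta$-current $[c_1(L|_W,\metr)]_E$ and then upgrade this to functorial $\delta$-psh via the stability of the hypothesis under pull-back. Given any morphism $f:X'\to X$ and a point $x'\in (f^\an)^{-1}(W)$, I choose a tropical chart $(V',\varphi_{U'})$ on $X'^\an$ with $x'\in V'$, $f^\an(V')\subseteq V_i$ and $f(U')\subseteq U_i$ for some $i\in I$, such that $\Omega'=\trop_{U'}(V')$ meets every face of $\Trop(U')$ in a convex set. By Remark \ref{rem psh slope condition}(b) the pulled-back tropical frame satisfies (i'), and by Remark \ref{rem psh slope condition}(d) (invoking Proposition \ref{positivity properties of preforms}(c)) it satisfies (ii') as well. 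Hence it suffices to show that whenever $\metr$ satisfies the hypothesis on $W$, the $\delta$-current $[c_1(L|_W,\metr)]_E$ is positive.

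To prove this, I apply the canonical decomposition of \ref{canonical decomposition}:
$$[c_1(L|_W,\metr)]_E = [c_1(L|_W,\metr)_{\rm ps}]_E + [c_1(L|_W,\metr)_{\rm res}]_E.$$
By Corollary \ref{delta forms vs delta currents}, it then suffices to show that both $c_1(L|_W,\metr)_{\rm ps}$ and $c_1(L|_W,\metr)_{\rm res}$ are positively representable generalized $\delta$-forms on $W$. For the residual part, on each tropical frame $(V_i,\varphi_{U_i},\Omega_i,s_i,\phi_i)$ the representative is, by \ref{canonical decomposition}, the integration $\delta$-preform attached to the corner locus $\tilde\phi_i\cdot N_{U_i,\R}$ on $\tilde\Omega_i$; condition (ii') is exactly the assertion that this representative is positive.

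For the piecewise smooth part, the representative on $(V_i,\varphi_{U_i})$ is $\dpa\dpb\phi_i$. Using the polyhedral complex $\Ccal$ supplied by (i'), this admits a polyhedral decomposition $\sum_\sigma (d'd''\phi_{i,\sigma})\wedge\delta_\sigma$ over the maximal faces $\sigma$ of $\Ccal$ in the sense of Example \ref{polyhedral supercurrent}. Condition (i') forces each $\phi_{i,\sigma}$ to be smooth and convex on $\Omega_i\cap\sigma$, whence $d'd''\phi_{i,\sigma}$ is a positive smooth $(1,1)$-superform on $\Omega_i\cap\sigma$ by Example \ref{convex function}. Example \ref{polyhedral supercurrent} then yields positivity of the $\delta$-preform $\dpa\dpb\phi_i$, and extending via $\tilde\phi_i$ and shrinking $\tilde\Omega_i$ if necessary produces a positive representative in $P^{1,1}(\tilde\Omega_i)$. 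Combining the two parts gives positive representability of $c_1(L|_W,\metr)$, hence positivity of $[c_1(L|_W,\metr)]_E$; the first reduction then delivers functorial $\delta$-psh, and the concluding plurisubharmonicity of $\metr$ follows from the implications recorded in \ref{psh-metrics}.

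The step I expect to be most delicate is the construction of the positive $\delta$-preform representative for the piecewise smooth part on $\tilde\Omega_i$: the piecewise smooth extension $\tilde\phi_i$ is a priori smooth only with respect to some polyhedral subdivision of $N_{U_i,\R}$, whereas (i') controls convexity only with respect to the complex $\Ccal$ living on $\Trop(U_i)$. Producing a positive representative on the ambient open set will require refining these two polyhedral structures simultaneously and choosing $\tilde\Omega_i$ small enough that the convexity of the relevant smooth pieces persists on each maximal cell of the common refinement meeting $\tilde\Omega_i$, so that the resulting representative lies in the same class as $\dpa\dpb\phi_i$ in $P^{1,1}(V_i,\varphi_{U_i})=P^{1,1}(\tilde\Omega_i)/N^{1,1}(V_i,\varphi_{U_i})$.
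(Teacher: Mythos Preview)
Your reduction to showing positivity of $[c_1(L|_W,\metr)]_E$ via functoriality of (i') and (ii') is correct and matches the paper, and your treatment of the residual part $c_1(L|_W,\metr)_{\rm res}$ as positively representable via (ii') is fine.

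The gap is in the piecewise smooth part. The object $c_1(L|_W,\metr)_{\rm ps}$ is a \emph{piecewise smooth form}, not a generalized $\delta$-form: it does not lie in $P^{1,1}(W)$, and the local datum $\dpa\dpb\phi_i$ on $\Omega_i$ is a polyhedral supercurrent that is \emph{not} of the form $\tilde\alpha\wedge\delta_{\Trop(U_i)}$ for a $\delta$-preform $\tilde\alpha\in P^{1,1}(\tilde\Omega_i)$. A $\delta$-preform of codimension~$0$ on $\tilde\Omega_i$ is simply a smooth superform, so its restriction to $\Trop(U_i)$ is smooth on each maximal face of $\Trop(U_i)$, never genuinely piecewise smooth along a subdivision $\Ccal$ refining $\Trop(U_i)$. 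Hence there is no class in $P^{1,1}(V_i,\varphi_{U_i})$ for $\dpa\dpb\phi_i$ to represent, Corollary~\ref{delta forms vs delta currents} cannot be invoked, and the extension-and-refinement manoeuvre you sketch in the last paragraph cannot succeed: no choice of $\tilde\phi_i$ or $\tilde\Omega_i$ will turn $\dpa\dpb\phi_i$ into a $\delta$-preform.

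The paper bypasses this by not splitting the current into two positively representable pieces. Instead it fixes a positive test $\delta$-form $\alpha\in B_c^{n-1,n-1}(W)$, covers $\supp(\alpha)$ by finitely many of the given frames, passes to a single very affine $U\subseteq\bigcap U_i$, and writes
\[
\langle [c_1(L|_W,\metr)]_E,\alpha\rangle
=\int_{|\Trop(U)|}\dpa\dpb\phi\wedge\alpha_U
+\int_{|\Trop(U)|}\delta_{\tilde\phi\cdot\Trop(U)}\wedge\alpha_U.
\]
Positivity of the first integral comes from (i') together with positivity of $\alpha_U$ on $\Omega$ via Example~\ref{polyhedral supercurrent} and Proposition~\ref{positivity properties of superforms}; positivity of the second uses (ii') and Proposition~\ref{positivity properties of preforms}. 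The point is that one only needs $\dpa\dpb\phi\wedge\alpha_U$ to be a positive polyhedral supercurrent on $\Omega$, which requires no ambient $\delta$-preform structure for $\dpa\dpb\phi$. Your argument can be repaired by abandoning Corollary~\ref{delta forms vs delta currents} for the smooth part and arguing directly at the level of the pairing, which is exactly this route.
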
 
\begin{proof}
We have seen in Remark \ref{rem psh slope condition} that conditions (i') and (ii') are functorial, so it is enough to show that the $\delta$-current $[c_1(L|_W,\metr)]_E$ is positive. Let $n:=\dim(X)$ and let $\alpha$ be a 
positive $\delta$-form of type $(n-1,n-1)$ with compact support in $W$. We have to show that $\langle [c_1(L|_W,\metr)]_E , \alpha \rangle \geq 0$. We proceed similarly as in the second part of the proof of Theorem \ref{psh slope condition}. We cover the support of $\alpha$ by finitely many non-empty tropical frames $(V_i,\varphi_{U_i},\Omega_i,s_i,\phi_i)$, $i=1, \dots, s$, satisfying  {(i'),} (ii') such that $\alpha$ is given on $V_i$ by $\alpha_i \in P^{n-1,n-1}(V_i,\varphi_{U_i})$. Then there is a unique $\alpha_U \in P^{n-1,n-1}(\Trop(U),\varphi_U)$ such that $\alpha$ is given on $\Uan$ by $\alpha_U$ (see \cite[Prop. 5.7]{gubler-kuennemann}). We use the same $\Omega$ and $\phi$ as in the proof of Theorem \ref{psh slope condition}. We have again $\supp(\alpha) \cap \Uan \subseteq V = \trop_U^{-1}(\Omega)$. Evaluating \eqref{candeco} at $\alpha$ gives 
\begin{equation} \label{ps3}
\langle [c_1(L|_W,\metr)]_E , \alpha \rangle = \int_{|\Trop(U)|} \dpa \dpb \phi \wedge \alpha_U  + \int_{|\Trop(U)|}\delta_{\tilde\phi\cdot \Trop (U)} \wedge \alpha_U .
\end{equation}
We have seen in Remark \ref{rem psh slope condition}(c),(d) that the tropical frame $(V,\varphi_U,\Omega,s,\phi)$ also satisfies (i') and (ii'). 
Since $\alpha$ is a positive $\delta$-form,  $\alpha_U$ is positive in $P(V,\varphi_U)$ (see Proposition \ref{local positivity is fine}). 
By Examples \ref{polyhedral supercurrent} and \ref{convex function}, $\dpa \dpb \phi$ restricts to a positive superform on $\relint(\Delta) \cap \Omega$ for any maximal face of $\Ccal$.  Then  Proposition \ref{positivity properties of superforms} and Example \ref{polyhedral supercurrent}  show that $ \dpa \dpb \phi \wedge \alpha_U$ induces a positive polyhedral supercurrent on $\Omega$ and hence the first integral in \eqref{ps3} is non-negative. 
Let $\tilde\Omega$ be an open subset of $N_{U,\R}$ with $\Omega =\tilde\Omega \cap \Trop(U)$. 
We choose a $\delta$-preform $\tilde\alpha_U \in P^{n-1,n-1}(\tilde\Omega)$ representing $\alpha_U$. By \cite[Prop. 1.14]{gubler-kuennemann}, we have 
$$\int_{|\Trop(U)|}\delta_{\tilde\phi\cdot \Trop (U)} \wedge \alpha_U = \int_{N_{U,\R}} \delta_{\tilde\phi \cdot N_{U,\R}} \wedge \delta_{\Trop(U)}  \wedge \tilde\alpha_U.$$
Since $\alpha_U$ is positive, we get immediately that the $\delta$-preform $\delta_{\Trop(U)}  \wedge \tilde\alpha_U$ on $\tilde\Omega$ is positive. Using (ii') and Proposition \ref{positivity properties of preforms}, we deduce that the above integral is non-negative  and hence \eqref{ps3} is non-negative.
\end{proof}

Let $L$ be a line bundle on $X$ equipped with a 
piecewise smooth metric $\metr$ over  {an} open subset $W$ of $\Xan$. 
Recall from \cite[9.9--9.11]{gubler-kuennemann} that  $\metr$  is called a {\it $\delta$-metric} if $c_1(L,\metr)$ is a well-defined $\delta$-form.

\begin{prop}\label{delta comparison thm}
The following properties are equivalent for a $\delta$-metric:
\begin{itemize}
 \item[(i)] The $\delta$-form $c_1(L|_W,\metr)$ is positive on $W$.
  \item[(ii)] The metric $\metr$ is functorial psh over $W$.
 \end{itemize}
\end{prop}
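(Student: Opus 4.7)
The plan is to reduce the claimed equivalence to a direct application of Proposition \ref{functorial current positive and positive delta-forms}, applied to the $\delta$-form
$$\beta := c_1(L|_W,\metr) \in B^{1,1}(W),$$
which is well-defined precisely because $\metr$ is a $\delta$-metric.

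First, I would invoke the functoriality of the Chern $\delta$-form for $\delta$-metrics: for any morphism $\varphi:X' \to X$ of algebraic varieties over $K$, the pull-back $\varphi^*\metr$ is again a $\delta$-metric on $\varphi^*L$ over $(\varphi^\an)^{-1}(W)$, and satisfies
$$c_1(\varphi^*L,\varphi^*\metr)=\varphi^*\, c_1(L|_W,\metr).$$
This compatibility is built into the formalism of $\delta$-metrics in \cite[\S 9]{gubler-kuennemann} and requires only a routine check (it parallels the well-known functoriality of first Chern currents, once one remembers that pull-back of a $\delta$-form is a $\delta$-form).

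Second, I would use the characterization of psh metrics recalled in \ref{psh-metrics}: a continuous metric is psh if and only if its first Chern current $[c_1(\cdot,\cdot)]_D$ is positive. Applying this to $\varphi^*\metr$ on $(\varphi^\an)^{-1}(W)$ and combining with the first step, condition (ii) becomes: for every morphism $\varphi:X' \to X$, the current $[\varphi^*\beta]_D$ is positive on $(\varphi^\an)^{-1}(W)$.

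Finally, Proposition \ref{functorial current positive and positive delta-forms} states that positivity of the generalized $\delta$-form $\beta$ on $W$ is equivalent to positivity of $[\varphi^*\beta]_D$ on $(\varphi^\an)^{-1}(W)$ for every morphism $\varphi:X' \to X$. Composing this equivalence with the reformulation of (ii) in the previous step yields (i) $\Leftrightarrow$ (ii). The only point that merits attention, rather than a genuine obstacle, is verifying the functoriality $c_1(\varphi^*L, \varphi^*\metr) = \varphi^* c_1(L,\metr)$ of Chern forms for $\delta$-metrics; everything else is a direct citation.
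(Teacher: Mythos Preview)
Your proof is correct and follows essentially the same approach as the paper: invoke the functoriality of the first Chern $\delta$-form under pull-back, then apply Proposition~\ref{functorial current positive and positive delta-forms}. The paper's proof is simply a two-sentence compression of the same argument you have spelled out in detail.
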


\proof
The formation of the first Chern $\delta$-form is compatible with pull-back.
Hence the equivalence of (i) and 
 (ii) follows directly from  {Proposition \ref{functorial current positive and positive delta-forms}}.
\qed

\begin{cor}\label{smooth comparison thm}
Let $L$ be a line bundle on a variety $X$ equipped with a 
smooth metric $\metr$ over the  open subset $W$ of $\Xan$. Then the following properties are equivalent:
\begin{itemize}
 \item[(i)] The {smooth form} $c_1(L|_W,\metr)$ is positive on $W$. 
 \item[(ii)] The metric $\metr$ is functorial $\delta$-psh over $W$.
 \item[(iii)] The metric $\metr$ is functorial psh over $W$.
 \item[(iv)] The metric $\metr$ is psh over $W$.
 \end{itemize}
\end{cor}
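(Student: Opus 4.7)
The plan is to deduce the corollary from the case of $\delta$-metrics already treated in Proposition \ref{delta comparison thm}, combined with the fact that for smooth forms, positivity as a form, as a current, and as a $\delta$-current all coincide (Remark \ref{positivity for smooth forms and its current}).

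First I would observe that the equivalence (i) $\Leftrightarrow$ (iii) is Proposition \ref{delta comparison thm} applied to $\metr$ viewed as a special case of a $\delta$-metric, using Remark \ref{compare cld} to identify positivity of $c_1(L|_W,\metr)$ as a smooth form with its positivity as a $\delta$-form. The implications (ii) $\Rightarrow$ (iii) (by comparing the definitions in \ref{functorial psh} and \ref{psh-metrics}) and (iii) $\Rightarrow$ (iv) (take $\varphi = \id_X$) are immediate. This reduces the corollary to establishing (iv) $\Rightarrow$ (i) and (i) $\Rightarrow$ (ii).

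For (iv) $\Rightarrow$ (i), I would work locally: for any frame $s$ of $L$ on an open $U \subseteq X$ with $U^{\rm an} \cap W \neq \emptyset$, the function $-\log\|s\|$ is smooth and the psh assumption says that $d'd''[-\log\|s\|]_D$ is a positive current on $U^{\rm an} \cap W$. Since $\metr$ is smooth, this current is represented by the smooth $(1,1)$-form $c_1(L,\metr)|_{U^{\rm an}\cap W}$, and Remark \ref{positivity for smooth forms and its current} upgrades positivity of the current to positivity of the smooth form itself. As positivity of smooth forms is a local condition (Remark \ref{compare cld}), one glues over a covering of $W$ by tropical charts to conclude that $c_1(L|_W,\metr)$ is positive on $W$.

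For (i) $\Rightarrow$ (ii), I would invoke functoriality: given any morphism $\varphi: X' \to X$, the pull-back $\varphi^*\metr$ is again smooth with first Chern form $\varphi^* c_1(L,\metr)$, and positivity of smooth forms is preserved under pull-back by the definitions in \ref{positive delta-forms on W}. A final appeal to Remark \ref{positivity for smooth forms and its current} converts this to positivity of the associated $\delta$-current $[c_1(\varphi^*L|_{\varphi^{-1}(W)},\varphi^*\metr)]_E$, which is exactly the $\delta$-psh condition for $\varphi^*\metr$; since $\varphi$ was arbitrary, $\metr$ is functorial $\delta$-psh. No serious obstacle is expected here, as Proposition \ref{delta comparison thm} carries the main content and the remaining steps are routine translations between the coinciding positivity notions for smooth objects.
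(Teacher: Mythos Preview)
Your proposal is correct and follows essentially the same approach as the paper: the equivalence (i) $\Leftrightarrow$ (iii) via Proposition \ref{delta comparison thm} and Remark \ref{compare cld}, with the remaining equivalences obtained from Remark \ref{positivity for smooth forms and its current}. The paper's proof is terser---it simply says ``the remaining equivalences follow easily from Remark \ref{positivity for smooth forms and its current}''---while you have spelled out (iv) $\Rightarrow$ (i) and (i) $\Rightarrow$ (ii) explicitly, but the underlying argument is the same.
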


\proof The equivalence of (i) and (iii) is a  consequence of Proposition \ref{delta comparison thm} 
if one observes that  a smooth form is positive if and only if it is a positive $\delta$-form (see Remark \ref{compare cld}).
The remaining equivalences follow easily from Remark \ref{positivity for smooth forms and its current}. 
\qed

\begin{ex}\label{exampleabelianvarieties}
Let $A$ be an abelian variety over $K$. Let $L$ be a line bundle on
$A$ equipped with a canonical metric $\metr$. We investigate $c_1(L,\metr)$.
Hence we assume that our metric is induced by a rigidification of $L$
at zero.
\begin{enumerate}
\item[(a)]
If $L$ is ample, then $\metr$ is plurisubharmonic.
\item[(b)]
If $L$ is algebraically equivalent to zero, then the
$\delta$-form $c_1(L,\metr)$ vanishes.
\end{enumerate}
\end{ex}

\proof
We recall from \cite[Example 9.17]{gubler-kuennemann} that the canonical metric $\metr$ is a $\delta$-metric. The 
argument was based on \cite[Example 8.15]{gubler-kuennemann}, where it was shown that $\metr$ is locally with 
respect to the analytic topology equal to the product of a smooth metric with a piecewise linear metric. 
We recall some details from the proof of this local factorization. We used the Raynaud extension 
\begin{equation} \label{Raynaud extension 2}
1 \longrightarrow \Tan \longrightarrow E \stackrel{q}{\longrightarrow} 
B^{\rm an} \longrightarrow 0,
\end{equation}
where $T$ is a 
multiplicative torus of rank $r$  and $B$ is an abelian variety of good reduction. There is an analytic quotient 
map $p:E \to A^\an$ which is locally an isomorphism. There is a canonical map $\val:E \to N_\R$, where $N$ is the 
cocharacter lattice of $T$. The cocycles of the line bundle $L$ determine a canonical quadratic function $q_0:N_\R \to \R$ 
with associated symmetric bilinear form 
$b:N_\R\times N_\R\longrightarrow \R$. Then we have  
$p^*\metr := e^{-q_0 \circ \val}{q^*}\metr_\Hcal$ 
on $p^*(\Lan)$. Here, $\Hcal$ is a line bundle on the abelian $\kcirc$-scheme $\Bcal$ with generic fibre $B$ and hence ${q^*}\metr_\Hcal$ 
is a  piecewise linear metric. Moreover, there is a unique metric $\metr_{\rm sm}$ on $O_E$ with $\|1\|_{\rm sm}=e^{-q_0 \circ \val}$. Since $\metr_{\rm sm}$ is a smooth metric and $p$ is a local isomorphism, 
this proves the desired local factorization.

In case (a), the symmetric bilinear form $b$ is positive definite and hence the first Chern form of $\metr_{\rm sm}$ is a positive smooth form. Moreover, the line bundle $\Hcal$ is again ample. Both claims are proved in \cite[Thm. 6.13]{bosch-luetkebohmert-1991}. 
Hence ${q^*}\metr_\Hcal$ is a semipositive piecewise linear metric and $\metr$ is locally the product of a smooth metric with positive first Chern form and a semipositive piecewise linear metric. By Corollary \ref{algebraic vs locally formal} and Theorem \ref{localnotes5}, the latter has a positive first Chern $\delta$-form and hence the first Chern $\delta$-form of the canonical metric $\metr$ on $L$ is positive as well. 

In case (b), the symmetric bilinear form $b$ is zero 
(see comments before \cite[Thm. 6.8]{bosch-luetkebohmert-1991}). 
Hence $q_0$ is linear and hence $d'd''q_0=0$. 
This means that the first Chern form of $\metr_{\rm sm}$ is zero. 
Moreover, the line bundle $\Hcal$ is algebraically equivalent to $0$. 
We conclude that ${q^*}\metr_\Hcal$ is a semipositive piecewise linear metric. 
Since semipositivity is a local analytic property, we deduce that $\metr$ is locally the product of a smooth metric with zero Chern form and of a semipositive piecewise linear metric. 
By Corollary \ref{algebraic vs locally formal} and 
Theorem \ref{localnotes5}, we see that $c_1(L,\metr)$ is 
a positive $\delta$-form. 
The same argument shows that $c_1(L^{-1},\metr)=-c_1(L,\metr)$ 
is a positive $\delta$-form.
We get $c_1(L,\metr)=0$ from 
Lemma \ref{positive and negative generalized delta-form}.
\qed

\begin{rem} \label{canonical metric on algebraic equivalent to zero}
Let $L$ be a line bundle on a proper smooth variety over $K$ which is algebraically equivalent to zero. We have seen in \cite[Example 8.16]{gubler-kuennemann} that a canonical metric $\metr_{\rm can}$ on $L$ is a $\delta$-metric as a positive tensor power is piecewise linear. Since the canonical metric is obtained by pull-back from a canonical metric on an odd line bundle on an abelian variety (see \cite[Example 8.16]{gubler-kuennemann}), we deduce from Example \ref{exampleabelianvarieties} that $c_1(L,\metr_{\rm can})=0$. 
\end{rem}

\begin{prop} \label{Zariski dense open subset}
Let $L$ be a line bundle on a variety $X$ over $K$ and let $U$ be a dense Zariski open subset of $X$. We consider a piecewise smooth metric $\metr$ on $L$ over an open subset $W$ of $\Xan$. Let $(P)$ be one of the four properties: psh, functorial psh, $\delta$-psh, functorial $\delta$-psh. 
Then $\metr$ has property $(P)$ if and only if the restriction of $\metr$ to $L^{\rm an}|_{\Uan \cap W}$ fulfills $(P)$.
\end{prop}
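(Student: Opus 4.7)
The ``only if'' direction is immediate in all four cases: a morphism $\psi\colon X'\to U$ gives a morphism $X'\to X$ by composition with $U\hookrightarrow X$, and the resulting pull-backs of $\metr$ agree on $\psi^{-1}(\Uan\cap W)=\psi^{-1}(W)$; restriction to $\Uan\cap W$ therefore preserves psh, functorial psh, $\delta$-psh and functorial $\delta$-psh.

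For the ``if'' direction in the psh case, the plan is to invoke Theorem \ref{psh slope condition}: it suffices to check that conditions (i) and (ii) hold for each tropical frame $(V,\varphi_{U'},\Omega,s,\phi)$ of $\metr$ on $W$. Set $U'':=U\cap U'$, a Zariski dense open subvariety of $U'$, and let $F\colon N_{U'',\R}\to N_{U',\R}$ be the surjective integral $\R$-affine map induced by the open immersion $U''\hookrightarrow U'$. By Remark \ref{rem psh slope condition}(b) applied to $f=\id_X$, the data $(V'',\varphi_{U''},\Omega'',s|_{U''},\phi\circ F|_{\Omega''})$ with $V''=V\cap\Uan$ and $\Omega''=F^{-1}(\Omega)\cap\Trop(U'')$ form a tropical frame for $\metr|_{\Uan\cap W}$. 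By hypothesis, conditions (i) and (ii) hold for this refined frame. Since $U''\hookrightarrow U'$ is generically finite of degree one, the Sturmfels--Tevelev formula \eqref{Sturmfels-Tevelev} gives $F_*\Trop(U'')=\Trop(U')$, and since all weights equal one the multiplicity identity $[N_{U',\sigma}:\L_F(N_{U'',\sigma''})]=1$ forces $F$ to restrict to an integral $\R$-affine isomorphism from each maximal face $\sigma''$ of $\Trop(U'')$ onto a maximal face $\sigma=F(\sigma'')$ of $\Trop(U')$, and every maximal face of $\Trop(U')$ arises uniquely this way. Condition (i) for $\phi$ on $\sigma\cap\Omega$ is obtained by pulling back convexity of $\phi\circ F$ along $(F|_{\sigma''})^{-1}$; condition (ii) on $\Omega$ follows from (ii) on $\Omega''$ via the projection formula $F_*((\tilde\phi\circ F)\cdot\Trop(U''))=\tilde\phi\cdot\Trop(U')$ together with the elementary fact that proper push-forward of an effective tropical cycle is effective on the corresponding open subsets.

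For the functorial psh case, let $\varphi\colon X'\to X$ be an arbitrary morphism; the goal is $\varphi^*\metr$ psh on $(\varphi^{\an})^{-1}(W)$. If $\varphi$ is dominant then $\varphi^{-1}(U)$ is a Zariski dense open subset of $X'$, and the functorial psh hypothesis applied to $\varphi|_{\varphi^{-1}(U)}\colon\varphi^{-1}(U)\to U$ yields $\varphi^*\metr$ psh on $(\varphi^{\an})^{-1}(\Uan\cap W)$; the psh case just proved, applied to $X'$ with dense open $\varphi^{-1}(U)$, then gives $\varphi^*\metr$ psh on $(\varphi^{\an})^{-1}(W)$. If $\varphi$ is not dominant, factor $\varphi=\iota\circ\varphi_0$ with $\iota\colon Y\hookrightarrow X$ the closed immersion of $Y:=\overline{\varphi(X')}\subsetneq X$ and $\varphi_0\colon X'\to Y$ dominant, and proceed by induction on $\dim X$: when $Y\cap U\neq\emptyset$, the metric $\iota^*\metr|_{(Y\cap U)^{\an}\cap W}$ is functorial psh (composing test morphisms $X''\to Y\cap U$ with $\iota$ brings one back to the hypothesis), so by the inductive hypothesis applied to $L|_Y$ with dense open $Y\cap U$ of $Y$, the metric $\iota^*\metr$ is functorial psh on $Y^\an\cap W$ and $\varphi^*\metr=\varphi_0^*\iota^*\metr$ is psh; the exceptional case $Y\cap U=\emptyset$ is handled by a further dimensional reduction through the irreducible components of $X\setminus U$, terminating at the trivial zero-dimensional base case.

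The $\delta$-psh and functorial $\delta$-psh cases follow the same scheme, using Proposition \ref{second psh slope condition} in place of Theorem \ref{psh slope condition} and the canonical decomposition \ref{canonical decomposition} of $[c_1(L|_W,\metr)]_E$; the stronger conditions (i'), (ii') survive the same Sturmfels--Tevelev transfer because positive representability of $\delta$-preforms pulls back along integral $\R$-affine maps by Proposition \ref{positivity properties of preforms}(c) and pushes forward along the degree-one open immersion $F$ by the projection formula. The principal obstacle is the careful Sturmfels--Tevelev analysis in the psh case, specifically the projection-formula transfer of condition (ii) between the corner loci on $\Omega''\subseteq\Trop(U'')$ and $\Omega\subseteq\Trop(U')$; the secondary delicate point is handling $Y\cap U=\emptyset$ in the functorial variants, which is resolved by the inductive dimension reduction described above.
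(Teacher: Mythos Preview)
Your approach differs substantially from the paper's, which is far more direct and does not use Theorem~\ref{psh slope condition} at all. For the non-functorial cases the paper takes a positive test form $\alpha$ of type $(n-1,n-1)$ with compact support in $W$ and observes that both $c_1(L|_W,\metr)_{\rm ps}\wedge\alpha$ and $c_1(L|_W,\metr)_{\rm res}\wedge\alpha$ are of top type $(n,n)$; by \cite[Cor.~5.6, 9.5]{gubler-kuennemann} such forms have support contained in $U^{\an}\cap W$, since the complement $(X\setminus U)^{\an}$ has lower dimension and carries no nonzero $(n,n)$-forms. One then chooses a smooth cutoff $\phi\geq 0$, compactly supported in $U^{\an}\cap W$ and identically $1$ on these supports, and computes via \eqref{candeco} that $\langle[c_1(L|_W,\metr)],\alpha\rangle=\langle[c_1(L|_{U^{\an}\cap W},\metr)],\phi\alpha\rangle\geq 0$, as $\phi\alpha$ is still positive. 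This handles psh and $\delta$-psh uniformly in a few lines; the functorial variants reduce to the non-functorial ones applied on $X'$ with dense open $\varphi^{-1}(U)$.

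Your route for the $\delta$-psh case has a genuine gap: Proposition~\ref{second psh slope condition} is only a \emph{sufficient} condition (existence of frames satisfying (i'), (ii') implies functorial $\delta$-psh), not a characterization. From the hypothesis that $\metr|_{U^{\an}\cap W}$ is $\delta$-psh you cannot produce tropical frames satisfying (i') and (ii'), so your Sturmfels--Tevelev transfer has nothing to start from. A smaller issue in the psh case: the claim that ``all weights equal one'' is false (tropical multiplicities of $\Trop(U')$ need not be $1$), though your transfer of condition~(i) survives without it. Finally, your handling of $Y\cap U=\emptyset$ in the functorial argument does not work as stated: the hypothesis concerns only $U^{\an}\cap W$ and gives no information about $\iota^*\metr$ for $\iota:Y\hookrightarrow X\setminus U$, so an ``induction through the irreducible components of $X\setminus U$'' has no input to induct on.
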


\begin{proof}
The preimage of a Zariski dense open subset is again a Zariski dense open subset and so the functoriality assertions follow from the corresponding assertions on $X$. 
The restriction of a psh (resp. $\delta$-psh) metric over $W$ to any open subset of $W$ is obviously psh (resp. $\delta$-psh). Conversely, assume that the restriction of $\metr$ to $L^{\rm an}|_{\Uan \cap W}$ is psh (resp. $\delta$-psh). 
Let $n:=\dim(X)$ and let $\alpha \in P^{n-1,n-1}(W)$. Using \ref{canonical decomposition}, we note that $\beta:= c_1(L^{\rm an}|_W,\metr)_{\rm res} \wedge \alpha$ and $ \omega:=c_1(L^{\rm an}|_W,\metr)_{\rm ps} \wedge \alpha$ have both support in $\Uan \cap W$ by \cite[Cor. 5.6, 9.5]{gubler-kuennemann}. Now assume that $\alpha$ is a positive smooth form (resp. a positive  $\delta$-form) with compact support in $W$. Then $\beta$ and $\omega$ have both compact support in $\Uan \cap W$.  
Using partition of unity \cite[Cor. 3.3.4]{chambert-loir-ducros}, there is a smooth function $\phi \geq 0$ with compact support in $W$ such that $\phi$ is identically $1$ on the supports of $\beta$ and $\omega$. Since $\phi\alpha$ is positive on $W \cap \Uan$, we get from \ref{candeco}  
$$\langle [c_1(L^{\rm an}|_W,\metr)], \alpha \rangle = \langle [c_1(L^{\rm an}|_{\Uan \cap W},\metr)], \phi\alpha \rangle \geq 0$$
using currents (resp. $\delta$-currents). This proves that $\metr$ is psh (resp. $\delta$-psh).
\end{proof}

Let $X$ be a  toric variety over $K$ with dense open torus $T$ and let $\metr$ be a  piecewise smooth toric metric  {on a} line bundle  $L$ on $X$. A toric section $s$ of $L$ 
(i.e.\ a rational section  invertible over $T$)
induces a  function $\phi$ on $N_\R$ with 
\begin{equation} \label{pseq}
\phi \circ \trop_T = -\log \| s \|
\end{equation}
on $\Tan$, where $N$ is the cocharacter lattice of $T$. {Note that $\phi$ is locally a piecewise smooth function.}

\begin{prop} \label{torus prop}
For a line bundle $L$ on a toric variety $X$, we assume that $\metr$ is a piece\-wise smooth toric metric as above. Then the following properties are equivalent.
\begin{itemize}
\item[(i)] The metric $\metr$ is functorial $\delta$-psh.
\item[(ii)] The metric $\metr$ is functorial psh.
\item[(iii)] The metric $\metr$ is psh.
\item[(iv)] The function $\phi$ from \eqref{pseq} is convex.
\end{itemize}
\end{prop}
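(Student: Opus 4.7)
The plan is to exploit that the dense open torus $T$ of $X$ is itself very affine with $\Trop(T) = N_\R$, so that $(\Tan, \varphi_T, N_\R, s, \phi)$ is a single tropical frame for the restriction of $\metr$ to $\Tan$. Since $T$ is a dense Zariski open subset of $X$, Proposition \ref{Zariski dense open subset} allows us to pass between each of the four positivity properties on $\Xan$ and on $\Tan$. The implications (i) $\Rightarrow$ (ii) $\Rightarrow$ (iii) are immediate from the definitions in \ref{psh-metrics}, so only the links between (iii), (iv) and (i) require work.

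For (iii) $\Rightarrow$ (iv), the first step is to restrict $\metr$ to $\Tan$ via Proposition \ref{Zariski dense open subset}, where it remains psh. Theorem \ref{psh slope condition} applied to the tropical frame above, taking the polyhedral complex $\Ccal$ on $N_\R$ underlying the piecewise smoothness of $\phi$, then yields that $\phi|_\sigma$ is convex for every maximal face $\sigma \in \Ccal$ and that the corner locus $\phi \cdot N_\R$ is effective on $N_\R$. By the equivalence (i) $\Leftrightarrow$ (iii) of Example \ref{convex function}, these two conclusions are jointly equivalent to the global convexity of $\phi$ on $N_\R$.

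For (iv) $\Rightarrow$ (i), we reduce once more to $\Tan$ via Proposition \ref{Zariski dense open subset} and apply Proposition \ref{second psh slope condition} to the single tropical frame $(\Tan, \varphi_T, N_\R, s, \phi)$, taking $\tilde\phi = \phi$ and $\tilde\Omega = N_\R$. Condition (i') of that proposition reduces to piecewise convexity of $\phi$ along the maximal faces of $\Ccal$, while condition (ii') demands that the corner locus $\phi \cdot N_\R$ define a positive $\delta$-preform on $N_\R$. Both are consequences of the convexity of $\phi$ together with Example \ref{convex function}: piecewise convexity is the direction (i) $\Rightarrow$ (iii) of that example, and the effectiveness of the corner locus translates directly into positivity of the associated $\delta$-preform because its smooth weights are non-negative. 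Proposition \ref{second psh slope condition} then gives that the restriction of $\metr$ to $\Tan$ is functorial $\delta$-psh, and a final application of Proposition \ref{Zariski dense open subset} lifts this back to $\Xan$.

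The main point to watch out for is the book-keeping between the global convexity of $\phi$ on $N_\R$ and the face-by-face slope conditions encoded in Theorem \ref{psh slope condition} and Proposition \ref{second psh slope condition}; this dictionary is exactly the content of Example \ref{convex function}, so once it is invoked the remaining verifications are short.
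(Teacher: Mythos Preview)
Your proof is correct and follows essentially the same route as the paper: reduce to $T$ via Proposition \ref{Zariski dense open subset}, use Theorem \ref{psh slope condition} for (iii) $\Rightarrow$ (iv), and Proposition \ref{second psh slope condition} together with Example \ref{convex function} for (iv) $\Rightarrow$ (i). One small simplification: in your (iii) $\Rightarrow$ (iv) step you read condition (i) of Theorem \ref{psh slope condition} as piecewise convexity along a complex $\Ccal$ refining $\phi$, but the theorem actually speaks of maximal faces of $\Trop(U)$ itself---and since $\Trop(T)=N_\R$ has $N_\R$ as its sole maximal face, condition (i) already gives global convexity of $\phi$ directly, without the detour through Example \ref{convex function}.
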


\proof By Proposition \ref{Zariski dense open subset}, we may assume that $X=T$. It is clear that (i) yields (ii) and that (ii) yields (iii). By Theorem \ref{psh slope condition}, property (iii) implies (iv). Finally, assume that $\phi$ is convex. It follows from Examples \ref{polyhedral supercurrent} and \ref{convex function} that the assumptions from Proposition \ref{second psh slope condition} are satisfied and hence (i) holds. 
\qed

\begin{cor} \label{torus cor}
We assume that $\metr$ is a toric formal metric on  {a} line bundle $L$ of a toric variety $X$ over $K$. Then there is a unique  piecewise linear function $\phi: N_\R \to \R$ with \eqref{pseq}. Moreover, 
the formal metric $\metr$ is semipositive if and only if $\phi$ is convex.
\end{cor}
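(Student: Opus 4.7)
The plan is to deduce the corollary from two earlier results by a short two-step argument, after first producing the function $\phi$ and recording its basic properties.

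Step 1 (construction of $\phi$): The tropicalization map $\trop_T:\Tan\to N_\R$ is surjective, and its fibers are precisely the orbits of the maximal compact analytic subgroup $T^{\an,1} := \trop_T^{-1}(0)$ of $\Tan$. Since the formal metric $\metr$ is toric, it is invariant under translation by $T^{\an,1}$; because $s$ is a toric section of $L$, the function $-\log\|s\|$ on $\Tan$ is therefore constant along the fibers of $\trop_T$ and descends to a unique function $\phi:N_\R\to \R$ with $\phi\circ\trop_T=-\log\|s\|$. Uniqueness is immediate from surjectivity. Piecewise linearity of $\phi$ follows from the combinatorial dictionary for toric formal models: $\metr$ is associated to a toric formal model $(\Xcal,\Lcal)$ in which $\Lcal$ admits local $T$-equivariant trivializations whose transition functions are characters, so $-\log\|s\|$ on a formal affine chart restricts to a piecewise affine function of the tropicalization.

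Step 2 (the equivalence): Since every formal metric on a line bundle over an algebraic variety is in particular piecewise linear, Theorem \ref{main theorem for pl} applies and gives that $\metr$ is semipositive on $\Xan$ if and only if $\metr$ is functorial psh on $\Xan$. A piecewise linear metric is a fortiori piecewise smooth, so Proposition \ref{torus prop} applies to the toric metric $\metr$ and shows that $\metr$ is functorial psh if and only if the function $\phi$ of Step 1 is convex on $N_\R$. Chaining these two equivalences yields that $\metr$ is semipositive if and only if $\phi$ is convex.

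The only nontrivial point is the piecewise linearity of $\phi$ in Step 1, which requires unpacking the explicit combinatorial description of $T$-equivariant formal line bundles on toric formal $\kcirc$-schemes; all other ingredients are direct citations. In particular, no new positivity analysis is needed beyond what Theorem \ref{main theorem for pl} and Proposition \ref{torus prop} already supply, so the argument is short once $\phi$ has been produced.
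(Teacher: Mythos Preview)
Your proposal is correct and follows essentially the same route as the paper: for the equivalence you chain Theorem \ref{main theorem for pl} (the paper cites the equivalent Theorem \ref{localnotes5}) with Proposition \ref{torus prop}, exactly as the paper does. For the first claim the paper simply invokes \ref{non-reduced analytic space} (formal $=$ piecewise linear) and says the piecewise linearity of $\phi$ follows ``easily''; your more explicit unpacking via toric invariance and toric formal trivializations is compatible with this and just spells out what the paper leaves implicit.
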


\begin{proof} It follows from \ref{non-reduced analytic space} that a metric is formal if and only if it is piecewise linear. This proves the first claim easily. For a formal metric, Theorem \ref{localnotes5} shows that semipositive is equivalent to functorial psh. Now the final claim follows from Proposition \ref{torus prop}. 
\end{proof}
This corollary is important for the characterization of all toric continuous metrics on a proper toric variety over $K$ given in the paper of Burgos--Philippon--Sombra over discretely valued fields 
\cite{burgosetal-2011} and generalized in the 
 {thesis of Julius Hertel 
\cite{hertel-thesis, gubler-hertel-2015}.}

\def\cprime{$'$}

\end{document}